\newtheorem{thm}{Theorem}[section]
\newtheorem{defn}[thm]{Definition}
\newtheorem{cor}[thm]{Corollary}
\newtheorem{lem}[thm]{Lemma}
\newtheorem{conj}[thm]{Conjecture}
\newtheorem*{rmk}{Remark}
\DeclareMathOperator{\R}{\mathbb{R}}
\DeclareMathOperator{\C}{\mathbb{C}}
\DeclareMathOperator{\FC}{\textup{Fun}\left(\Delta^1,\mathcal{C}\right)}
\DeclareMathOperator{\PP}{\mathbb{P}}
\DeclareMathOperator{\F}{\mathbb{F}_2}
\DeclareMathOperator{\Z}{\mathbb{Z}}
\DeclareMathOperator{\SC}{\mathbb{S}_{C_2}}
\DeclareMathOperator{\SR}{\mathbb{S}_{\mathbb{R}}}
\DeclareMathOperator{\A}{\mathcal{A}}
\DeclareMathOperator{\Ext}{\textup{Ext}}
\DeclareMathOperator{\Map}{\textup{Map}}
\DeclareMathOperator{\Be}{\textup{Be}^{C_2}}
\DeclareMathOperator{\Bec}{\widehat{\textup{Be}}_2^{C_2}}
\DeclareMathOperator{\Rcell}{\textup{SH}_{\textup{cell}}(\R)}
\title{The Borel and genuine $C_2$-equivariant Adams spectral sequences}
\author{Sihao Ma}
\begin{document}

\begin{abstract}
We find some relations between the classical Adams spectral sequences for stunted real projective spectra, the Borel $C_2$-equivariant Adams spectral sequence for the 2-completed sphere, and the genuine $C_2$-equivariant Adams spectral sequence for the 2-completed sphere. This allows us to understand the genuine $C_2$-equivariant Adams spectral sequence from the Borel Adams spectral sequences. We show that the Borel Adams spectral sequence is computable as a classical Adams spectral sequence.
\end{abstract}

\maketitle

\tableofcontents

\section{Introduction}\label{sec-intro}
In this article, everything is assumed to be $2$-completed implicitly. 

In $Sp^{C_2}$, there are two Adams spectral sequences converging to the $C_2$-equivariant stable homotopy groups of spheres: the genuine $C_2$-equivariant Adams spectral sequence (\cite{hk})
\[\Ext_{\A^{C_2}_{*,*}}^{s,t,w}\left(\left(H_{C_2}\right)_{*,*},\left(H_{C_2}\right)_{*,*}\right)\implies\pi_{t-s,w}^{C_2}\SC,\]
and the Borel $C_2$-equivariant Adams spectral sequence (Greenlees' works \cite{gre1,gre2,gre3})
\[\Ext_{\A^{h}_{*,*}}^{s,t,w}\left(\left(H_{C_2}\right)^h_{*,*},\left(H_{C_2}\right)^h_{*,*}\right)\implies\pi_{t-s,w}^{C_2}\left(\SC\right)^h.\]
By the Segal conjecture for $C_2$ \cite{ldma}, the map
\[\SC\to\left(\SC\right)^h\]
is a ($2$-adic) equivalence in $Sp^{C_2}$. However, the two spectral sequences above are not isomorphic.

Let $a\in\pi_{-1,-1}^{C_2}\SC$ be represented by the inclusion $S^0\to S^{\sigma}$. Let $Ca$ be the cofiber of $a$. The $E_2$-page of the Borel Adams spectral sequence for $\SC$ can be computed through the algebraic $a$-Bockstein spectral sequence. There is also a topological $a$-Bockstein spectral sequence converging to $\pi_{*,*}^{C_2}\SC$. Note that by definition the $a$-Bockstein spectral sequence only converges to the $a$-complete (Borel complete) sphere. Since we are working in the $2$-complete category, this is equivalent to $\SC$. Therefore, we have the following square:
\begin{equation}\label{square1}
\begin{aligned}
\xymatrix@C=1cm{
 & \Ext_{\A_{*,*}^h}\left((H_{C_2}\wedge Ca)_{*,*}^h\right)[a] \ar@{=>}[rd]|{\textbf{algebraic }a-\textbf{Bockstein SS}} \ar@{=>}[ld]|{\textbf{Borel Adams SS}} & \\
\pi_{*,*}^{C_2}\left(Ca\right)[a] \ar@{=>}[rd]|{\textbf{topological }a-\textbf{Bockstein SS}} & & \Ext_{\A_{*,*}^h}\left((H_{C_2})^h_{*,*}\right) \ar@{=>}[ld]|{\textbf{Borel Adams SS}}\\
 & \pi_{*,*}^{C_2}\SC. & \\
}
\end{aligned}
\end{equation}

Section \ref{sec-background} is a background section, which aims to provide some knowledge of the $C_2$-equivariant and $\R$-motivic stable homotopy theory that will be needed throughout this paper.

In Section \ref{sec-borel}, we will prove that the Borel Adams spectral sequence for $\SC$ is isomorphic to the limit of a sequence of classical Adams spectral sequences for stunted real projective spectra, which converges to $\pi_*\PP_{-\infty}^{-w-1}$ (c.f. Theorem \ref{borandlim}), where $\PP_{-\infty}^{-w-1}=\varprojlim\limits_k \PP_{-k}^{-w-1}$. The fiber sequence
\[\PP_{-\infty}^{-j-2}\to\PP_{-\infty}^{-j-1}\to S^{-j-1}\]
and the filtration on the limit of homology groups give us the topological and algebraic Atiyah-Hirzebruch spectral sequences, which will be proved to be isomorphic to the topological and algebraic $a$-Bockstein spectral sequences (c.f. Theorem \ref{topabss} and Theorem \ref{rhobss}). In conclusion, there is a square in the classical setting isomorphic to the one above:
\begin{equation}\label{square2}
\begin{aligned}
\xymatrix@!C@=1cm{
 & \bigoplus\Ext_{\A_*}(\F) \ar@{=>}[rd]|{\textbf{algebraic Atiyah-Hirzebruch SS}} \ar@{=>}[ld]|{\textbf{Adams SS}} & \\
\bigoplus\pi_*\mathbb{S} \ar@{=>}[rd]|{\textbf{topological Atiyah-Hirzebruch SS}} & & \Ext_{\A_*}(\varprojlim\limits_kH_*\PP_{-k}^{-w-1}) \ar@{=>}[ld]|{\textbf{Adams SS}}\\
 & \pi_*\PP_{-\infty}^{-w-1}. & \\
}
\end{aligned}
\end{equation}
Note that all these $4$ spectral sequences converge. The convergence of the Adams spectral sequence for the $2$-complete sphere is well known (see \cite{adams}). The convergence of the Adams spectral sequence of the inverse limit is discussed in \cite{lin}. The convergence of the topological Atiyah-Hirzebruch spectral sequence is discussed in \cite[Cor.~11.4]{gm}. Similar considerations imply the convergence of the algebraic Atiyah-Hirzebruch spectral sequence.

In Section \ref{sec-limit}, we will focus on the limit of the Adams spectral sequences. It is hard to compute directly, as more and more information about higher stable stems is involved as the dimension of the bottom cell gets lower and lower. To solve this problem, we will show that it is also isomorphic to the direct sum of at most two classical Adams spectral sequences for some bounded below spectra of finite type, which can be computed (relatively) easily (c.f. Theorem \ref{posw} and Theorem \ref{negw}).

In Section \ref{sec-lemma}, we will prove a $4\times 4$ lemma which generalizes \cite[Lem.~9.3.2]{am17} (c.f. Lemma \ref{lem6}). The results in this Section will be applied in Section \ref{sec-genuine}.

In Section \ref{sec-genuine}, we will discuss the relationship between the Borel and genuine $C_2$-equivariant Adams spectral sequences for $\SC$. We will prove that the $E_2$-term of the Borel one can be obtained from the $E_2$-term of the genuine one by shifting the degree of some elements and cancelling some genuine $d_2$ differentials (c.f. Theorem \ref{borext}). We will also show that their differentials can be deduced from each other up to indeterminacy (c.f. Theorem \ref{shortd} and Theorem \ref{diffnc}).

In Section \ref{sec-names}, we will discuss the relations of two different naming systems of elements appeared in both the genuine $\Ext$ and the Borel $\Ext$, one from the $a$-Bockstein spectral sequence, the other from the algebraic Atiyah-Hirzebruch spectral sequence (c.f. Section \ref{sec-limit}). For some elements, we will show that we can deduce one of its names from the other (c.f. Theorem \ref{pcnames} and Theorem \ref{ncnames}), while we also leave a case open (c.f. Conjecture \ref{conjnc}).

%In Appendix \ref{sec-charts}, we include the charts of $\Ext_{\A_{*,*}^h}^{s,t,w}\left((H_{C_2})^h_{*,*}\right)$ for $0\le t-s\le 30$ and $-2\le t-s-w\le13$. The data are obtained from a computer program \cite{prog} which computes the algebraic Atiyah-Hirzebruch spectral sequence in Section \ref{sec-limit}.

\textbf{Acknowledgement:} The author would like to thank Mark Behrens for encouragement to think about this problem and many helpful conversations. The author would like to thank William Balderrama for valuable comments on previous drafts of this article. The author would like to thank Bert Guillou and Dan Isaksen for sharing their computations. The author would also like to thank the annonymous referee for many detailed suggestions.
\vskip.1in
\noindent\textbf{Notation:}
\begin{itemize}
\item $H_{\textup{cl}}$ is the classical Eilenberg-MacLane spectrum associated to $\F$.
\item $H_{C_2}$ is the $C_2$-equivariant Eilenberg-MacLane spectrum associated to the constant Mackey functor $\underline{\F}$.
\item $H_{\R}$ is the $\R$-motivic Eilenberg-MacLane spectrum associated to $\F$.
\item $\overline{H_{\textup{cl}}}$ is the fiber of the unit map $\mathbb{S}\to H_{\textup{cl}}$.
\item $\overline{H_{C_2}}$ is the fiber of the unit map $\SC\to H_{C_2}$.
\item $\overline{H_{\R}}$ is the fiber of the unit map $\SR\to H_{\R}$.
\item For a $C_2$-equivariant spectrum $X$, $X^h$ is its homotopy completion, $X^{\Phi}$ is its geometric localization, and $X^t$ is its Tate spectrum.
\item For a (2-complete) cellular $\R$-motivic spectrum $X$, $X^h$ is $X^{\wedge}_{\rho}[\tau^{-1}]$, $X^{\Phi}$ is $X[\rho^{-1}]$, and $X^t$ is $X^{\wedge}_{\rho}[\tau^{-1}][\rho^{-1}]$. $X^{C_2}$ is the pullback of
\[\xymatrix{ & X^{\Phi} \ar[d] \\ X^h \ar[r] & X^t. \\}\]
$X^{\Theta}$ is the cofiber of the map $X^{C_2}\to X^h$. $X^{\Psi}$ is the cofiber of the map $X\to X^h$ (c.f. Section \ref{subsec-rmot}).
\item For a $\R$-motivic spectrum $X$, $X_{*,*}$ are its bigraded homotopy groups $\pi_{*,*}^{\R}(X)$.
\item For a $C_2$-equivariant spectrum $X$, $X_{*,*}$ are its bigraded homotopy groups $\pi_{*,*}^{C_2}(X)$. The index is taken such that $\pi_{i,j}^{C_2}(X)=\pi_{(i-j)+j\sigma}^{C_2}(X)$, where $\sigma$ is the sign representation.
\item $\A_*$ is the classical dual Steenrod algebra.
\item $\A^h_{*,*}$ is the Borel $C_2$-equivariant dual Steenrod algebra.
\item $\A^{C_2}_{*,*}$ is the genuine $C_2$-equivariant dual Steenrod algebra.
\item $\A_{\R}$ is the $\R$-motivic Steenrod algebra. $\A^{\R}_{*,*}$ is its dual.
\item For a Hopf algebroid $(A,\Gamma)$, and a left $\Gamma$-comodule $M$, $C_{\Gamma}(A,M)$ is the reduced cobar complex $\{\Gamma\otimes_A\overline{\Gamma}^{\otimes s}\otimes_A M\}$. If there is no ambiguity, it will be abbreviated by $C_{\Gamma}(M)$, and $\Ext_{\Gamma}(A,M)$ will be abbreviated by $\Ext_{\Gamma}(M)$.
\item $\Ext_{\C}$ is the cohomology of the $\C$-motivic dual Steenrod algebra.
\item $(s,t,w)$ grading is used when $\Ext_{\A^{C_2}_{*,*}}^{*,*,*}$ and $\Ext_{\A^{\R}_{*,*}}^{*,*,*}$ are considered, where $s$ denotes the filtration degree, $t$ denotes the total degree, and $w$ is the motivic weight. Note that this is different from the $(s',f',w')$ grading, where $s'=t-s$, $f'=s$, and $w'=w$.
\item $Y/X$ is the cofiber of the map $X\to Y$.
\item $\F[x]/(x^\infty)$ is the cokernel of the inclusion map $\F[x]\to\F[x^\pm]$.
\end{itemize}

\section{Background}\label{sec-background}
\subsection{$C_2$-equivariant Adams spectral sequences}\hfill

Recall from \cite{hk} that
\[\left(H_{C_2}\right)_{*,*}=\F[u,a]\oplus\left(\bigoplus_{j\ge0}\frac{\F[u]}{\left(u^\infty\right)}\left\{\frac{\gamma}{a^j}\right\}\right),\]
where $|u|=(0,-1)$, $|a|=(-1,-1)$, and $|\gamma/u|=(0,2)$. Note that $a$ is the Hurewicz image of 
\[a\in\pi_{-1,-1}^{C_2}\SC,\]
which is realised by the inclusion $S^0\to S^{\sigma}$. The genuine $C_2$-equivariant dual Steenrod algebra is (\cite{hk})
\[\A^{C_2}_{*,*}=\left(H_{C_2}\wedge H_{C_2}\right)_{*,*}=\frac{\left(H_{C_2}\right)_{*,*}[\xi_i,\tau_j:i\ge1,j\ge0]}{\left(\tau_j^2=a\tau_{j+1}+u\xi_{j+1}+a\tau_0\xi_{j+1}\right)},\]
where $|\xi_i|=(2^{i+1}-2,2^i-1)$, and $|\tau_j|=(2^{j+1}-1,2^j-1)$. Note that $\A^{C_2}_{*,*}$ is a free $\left(H_{C_2}\right)_{*,*}$-module, and $\left(\left(H_{C_2}\right)_{*,*},\A^{C_2}_{*,*}\right)$ is a Hopf algebroid. The genuine $C_2$-equivariant Adams spectral sequence for a finite spectrum $X$ can be constructed through the $H_{C_2}$-based Adams resolution
\begin{equation}\label{genadamsres}
\begin{aligned}
\xymatrix{
X \ar[d] & \overline{H_{C_2}}\wedge X \ar[l] \ar[d] & \overline{H_{C_2}}^{\wedge2}\wedge X \ar[l]\ar[d] & \cdots \ar[l] \\
H_{C_2}\wedge X & H_{C_2}\wedge\overline{H_{C_2}}\wedge X & H_{C_2}\wedge\overline{H_{C_2}}^{\wedge2}\wedge X. & \\
}
\end{aligned}
\end{equation}
Its $E_1$-page is
\[E_1^{s,t,w}=\pi_{t-s,w}^{C_2}H_{C_2}\wedge\overline{H_{C_2}}^{\wedge s}\wedge X=C_{\A^{C_2}_{*,*}}^{s,t,w}\left(\left(H_{C_2}\wedge X\right)_{*,*}\right),\]
and its $E_2$-page is
\[E_2^{s,t,w}=\Ext_{\A^{C_2}_{*,*}}^{s,t,w}\left(\left(H_{C_2}\wedge X\right)_{*,*}\right).\]

Consider the homotopy completion functor in $Sp^{C_2}$:
\[X^h=\Map\left(\left(EC_2\right)_+,X\right).\]
In \cite{hk}, Hu-Kriz found that
\[\left(H_{C_2}\right)^h_{*,*}=\F[u^{\pm},a].\]
To compute the Borel $C_2$-equivariant dual Steenrod algebra, we need to work in the category $\mathscr{M}$ of bigraded $\Z[a]$-modules complete with respect to the topology associated with the principal ideal $(a)$, and continuous homomorphisms. In \cite{hk}, Hu-Kriz also computed that
\begin{align*}
\A^h_{*,*}=\left(H_{C_2}\wedge H_{C_2}\right)^h_{*,*} &\cong\left(H_{C_2}\right)^h_{*,*}[\zeta_i:i\ge1]^{\wedge}_a\\
&\cong\frac{\left(H_{C_2}\right)^h_{*,*}[\xi_i,\tau_j:i\ge1,j\ge0]^{\wedge}_a}{\left(\tau_j^2=a\tau_{j+1}+u\xi_{j+1}+a\tau_0\xi_{j+1}\right)},
\end{align*}
where $\zeta_i$ is the image of $\zeta_i\in\A_*$ under the homotopy completion map. The image of $\zeta_i$ under the isomorphism can be computed inductively using the formula
\[\zeta_{i+1}=a^{-1}\left(\left(u+a\tau_0\right)\zeta_i^2+u^{2^i}\xi_i+a^{2^{i+1}}\xi_{i+1}\right).\]
It can be seen that the image of $\zeta_{i+1}$ has the leading term $u^{2^i-1}\tau_i$ under the $(a)$-adic filtration. Note that $\left(\left(H_{C_2}\right)^h_{*,*},\A^h_{*,*}\right)$ is only a Hopf algebroid in $\mathscr{M}$. Greenlees \cite{gre1,gre2,gre3} constructed the Borel $C_2$-equivariant Adams spectral sequence for a finite spectrum $X$ by applying the homotopy completion functor to the resolution (\ref{genadamsres}) to get
\begin{equation*}
\begin{aligned}
\xymatrix{
X^h \ar[d] & \left(\overline{H_{C_2}}\wedge X\right)^h \ar[l] \ar[d] & \left(\overline{H_{C_2}}^{\wedge2}\wedge X\right)^h \ar[l]\ar[d] & \cdots \ar[l] \\
\left(H_{C_2}\wedge X\right)^h & \left(H_{C_2}\wedge\overline{H_{C_2}}\wedge X\right)^h & \left(H_{C_2}\wedge\overline{H_{C_2}}^{\wedge2}\wedge X\right)^h. & \\
}
\end{aligned}
\end{equation*}
Its $E_1$-page is
\[E_1^{s,t,w}=\pi_{t-s,w}^{C_2}\left(H_{C_2}\wedge\overline{H_{C_2}}^{\wedge s}\wedge X\right)^h=C_{\A^h_{*,*}}^{s,t,w}\left(\left(H_{C_2}\wedge X\right)^h_{*,*}\right),\]
and its $E_2$-page is
\[E_2^{s,t,w}=\Ext_{\A^h_{*,*}}^{s,t,w}\left(\left(H_{C_2}\wedge X\right)^h_{*,*}\right).\]

\subsection{$\mathbb{R}$-motivic stable homotopy theory}\label{subsec-rmot}\hfill

Let $\textup{SH}(\R)$ be the $\R$-motivic stable homotopy category. Let $\Rcell$ be the localizing subcategory of $\textup{SH}(\R)$ generated by the motivic spheres $\left\{S_{\R}^{p,q}\right\}$.

In \cite{voeH}, Voevodsky computed that
\[\left(H_{\R}\right)_{*,*}=\F[\tau,\rho],\]
where $|\tau|=(0,-1)$, and $|\rho|=(-1,-1)$. Note that $\rho$ is the Hurewicz image of 
\[\rho\in\pi_{-1,-1}^{\R}\SR,\]
which is realised by the inclusion $S^{0,0}_{\R}=\{\pm1\}\to\mathbb{G}_m=S^{1,1}_{\R}$. Voevodsky also computed the $\R$-motivic Steenrod algebra (\cite{voeA})
\[\A^{\R}_{*,*}=\left(H_{\R}\wedge H_{\R}\right)_{*,*}=\frac{\left(H_{\R}\right)_{*,*}[\xi_i,\tau_j:i\ge1,j\ge0]}{\left(\tau_j^2=\rho\tau_{j+1}+\tau\xi_{j+1}+\rho\tau_0\xi_{j+1}\right)},\]
where $|\xi_i|=(2^{i+1}-2,2^i-1)$, and $|\tau_j|=(2^{j+1}-1,2^j-1)$.

The $\R$-motivic Adams spectral sequence for a cellular spectrum of finite type $X$ (\cite{mor} \cite{di} \cite{hko}) is induced by the $H_{\R}$-based Adams resolution 
\begin{equation*}
\begin{aligned}
\xymatrix{
X \ar[d] & \overline{H_{\R}}\wedge X \ar[l] \ar[d] & \overline{H_{\R}}^{\wedge2}\wedge X \ar[l]\ar[d] & \cdots \ar[l] \\
H_{\R}\wedge X & H_{\R}\wedge\overline{H_{\R}}\wedge X & H_{\R}\wedge\overline{H_{\R}}^{\wedge2}\wedge X. & \\
}
\end{aligned}
\end{equation*}
Its $E_1$-page is
\[E_1^{s,t,w}=\pi_{t-s,w}^{\R}H_{\R}\wedge\overline{H_{\R}}^{\wedge s}\wedge X=C_{\A^{\R}_{*,*}}^{s,t,w}\left(\left(H_{\R}\wedge X\right)_{*,*}\right),\]
and its $E_2$-page is
\[E_2^{s,t,w}=\Ext_{\A^{\R}_{*,*}}^{s,t,w}\left(\left(H_{\R}\wedge X\right)_{*,*}\right).\]

By \cite[Const.~1.2.2.6]{ha}, we can express the $E_r$-terms of the $\R$-motivic Adams spectral sequence for $\SR$ as
\[E_r^{s,t,w}\cong\textup{im}\left(\pi^{\R}_{t-s,w}\left(\overline{H_{\R}}^{\wedge s}\middle/\overline{H_{\R}}^{\wedge (s+r)}\right)\to\pi^{\R}_{t-s,w}\left(\overline{H_{\R}}^{\wedge (s+1-r)}\middle/\overline{H_{\R}}^{\wedge (s+1)}\right)\right),\]
where the map
\[\left.\overline{H_{\R}}^{\wedge a}\middle/\overline{H_{\R}}^{\wedge b}\right.\to\left.\overline{H_{\R}}^{\wedge c}\middle/\overline{H_{\R}}^{\wedge d}\right. \quad(a\ge c,\ b\ge d)\]
is obtained by the iteration of the map
\[\overline{H_{\R}}\to\SR.\]
The $E_r$-terms can be viewed as the subquotient of the $E_1$-terms by the isomorphism
\[E_r^{s,t,w}\cong\frac{\textup{im}\left(\pi^{\R}_{t-s,w}\left(\overline{H_{\R}}^{\wedge s}\middle/\overline{H_{\R}}^{\wedge (s+r)}\right)\to\pi^{\R}_{t-s,w}\left(\overline{H_{\R}}^{\wedge s}\middle/\overline{H_{\R}}^{\wedge (s+1)}\right)\right)}{\textup{ker}\left(\pi^{\R}_{t-s,w}\left(\overline{H_{\R}}^{\wedge s}\middle/\overline{H_{\R}}^{\wedge (s+1)}\right)\to\pi^{\R}_{t-s,w}\left(\overline{H_{\R}}^{\wedge (s+1-r)}\middle/\overline{H_{\R}}^{\wedge (s+1)}\right)\right)}.\]
In a similar way, they can also be viewed as the subquotient of $\pi^{\R}_{t-s,w}\left(\overline{H_{\R}}^{\wedge a}\middle/\overline{H_{\R}}^{\wedge b}\right)$ for any $s+1-r\le a\le s$ and $s+1\le b\le s+r$. The differential $d_r$ can be obtained through
\[
\xymatrix{
\pi^{\R}_{t-s,w}\left(\overline{H_{\R}}^{\wedge s}\middle/\overline{H_{\R}}^{\wedge (s+r)}\right) \ar[r]\ar[d] & \pi^{\R}_{t-s,w}\left(\overline{H_{\R}}^{\wedge (s+1-r)}\middle/\overline{H_{\R}}^{\wedge (s+1)}\right) \ar[d] \\
\pi^{\R}_{t-s-1,w}\left(\overline{H_{\R}}^{\wedge (s+r)}\middle/\overline{H_{\R}}^{\wedge (s+2r)}\right) \ar[r] & \pi^{\R}_{t-s-1,w}\left(\overline{H_{\R}}^{\wedge (s+1)}\middle/\overline{H_{\R}}^{\wedge (s+1+r)}\right). \\
}
\]

A bridge connecting the $\R$-motivic stable homotopy theory and the $C_2$-equivariant homotopy theory is the $C_2$-Betti realization functor constructed by Morel-Voevodsky \cite{mv}:
\[\Be:\textup{SH}(\R)\to Sp^{C_2}.\]
In \cite{ho}, Heller-Ormsby proved that 
\[\Be(H_{\R})=H_{C_2}.\]
In their homotopy groups, it turns out that $\Be(\tau)=u$ and $\Be(\rho)=a$.

In \cite{bac18}, Bachmann proved that for $X\in\textup{SH}(\R)$, the $C_2$-equivariant Betti realization induces an isomorphism
\begin{equation}\label{isoh}
\pi_{*,*}^{\R}X[\rho^{-1}]\xrightarrow{\cong}\pi_{*,*}^{C_2}\Be(X)^{\Phi},
\end{equation}
where $\Be(X)^{\Phi}$ is the geometric localization of $\Be(X)$. In \cite{bs}, Behrens-Shah defined the $2$-complete $C_2$-Betti realization functor
\[\Bec:\Rcell^{\wedge}_2\to(Sp^{C_2})^{\wedge}_2;\quad\Bec(X)=\Be(X)^{\wedge}_2.\]
For $X\in\Rcell^{\wedge}_2$, they proved another isomorphism induced by $\Bec$
\begin{equation}\label{isophi}
\pi_{*,*}^{\R}X^{\wedge}_{\rho}[\tau^{-1}]\xrightarrow{\cong}\pi_{*,*}^{C_2}\Bec(X)^h,
\end{equation}
and hence, there is also
\begin{equation}\label{isot}
\pi_{*,*}^{\R}X^{\wedge}_{\rho}[\tau^{-1}][\rho^{-1}]\xrightarrow{\cong}\pi_{*,*}^{C_2}\Bec(X)^t.
\end{equation}
Here $X^{\wedge}_{\rho}[\tau^{-1}]$ is defined as follows: For each $i\ge1$, $C(\rho^i)^{\wedge}_2$ admits a $\tau^j$-self map. Let $C(\rho^i)^{\wedge}_2[\tau^{-1}]$ be the telescope, and we define
\[X^{\wedge}_{\rho}[\tau^{-1}]:=\varprojlim_i X\wedge C(\rho^i)^{\wedge}_2[\tau^{-1}].\]
These three isomorphisms ((\ref{isoh}), (\ref{isophi}), (\ref{isot})) explain our notations $X^h$, $X^{\Phi}$, and $X^t$ (c.f. Section \ref{sec-intro}). Applying the isotropy separation square to $\Bec(X)$, we also have the following isomorphism for $X\in\Rcell^{\wedge}_2$:
\[\pi_{*,*}^{\R}X^{C_2}\xrightarrow{\cong}\pi_{*,*}^{C_2}\Bec(X).\]

In \cite{bs}, Behrens-Shah also proved that $\Bec$ has a fully faithful right adjoint $\textup{Cell Sing}^{C_2}$, which allows us to regard $\left(Sp^{C_2}\right)^{\wedge}_2$ as a localization of $\Rcell^{\wedge}_2$, and our $X^{C_2}$ is equivalent to $\textup{Cell Sing}^{C_2}\Bec(X)$. Moreover, they showed that this pair of adjoint functors satisfies the projection formula
\[\textup{Cell Sing}^{C_2}(A)\wedge B\xrightarrow{\simeq}\textup{Cell Sing}^{C_2}\left(A\wedge\Bec(B)\right)\]
for all $A\in\left(Sp^{C_2}\right)^{\wedge}_2$ and $B\in\Rcell^{\wedge}_2$. In particular, when $A\simeq\SC$, we have
\[B^{C_2}\simeq B\wedge \left(\SR\right)^{C_2}.\]
Therefore, $(-)^{C_2}$ is a smashing localization.

Now we define the $\R$-motivic spectra $X^{\Theta}$ and $X^{\Psi}$ for $X\in\Rcell^{\wedge}_2$ (c.f. Section \ref{sec-intro}) for later use.

\begin{defn}
For $X\in\Rcell^{\wedge}_2$, let
\[X^{\Theta}=\left.X^h\middle/X^{C_2}\right.,\]
and
\[X^{\Psi}=\left.X^h\middle/X\right..\]
\end{defn}

By the Segal conjecture for $C_2$ \cite{ldma}, the map
\[\SC\to\left(\SC\right)^h\]
is an equivalence in $\left(Sp^{C_2}\right)^{\wedge}_2$. Thus there is an equivalence 
\[\left(\SR\right)^{C_2}\xrightarrow{\simeq}\left(\SR\right)^h\]
in $\Rcell^{\wedge}_2$. Then we have
\[\left(\SR\right)^{\Theta}\simeq\left.\left(\SR\right)^h\middle/\left(\SR\right)^{C_2}\right.\simeq*,\]
and 
\[\left(\SR\right)^{\Psi}\simeq\left.\left(\SR\right)^h\middle/\SR\right.\simeq\left.\left(\SR\right)^{C_2}\middle/\SR\right..\]
Moreover,
\[\left.X^{C_2}\middle/X\right.\simeq\left.\left(X\wedge\left(\SR\right)^{C_2}\right)\middle/\left(X\wedge\SR\right)\right.\simeq X\wedge\left(\SR\right)^{\Psi}.\]
However, $X^{C_2}$ may not be equivalent to $X^h$ in general. For example, when $X\simeq H_{\R}$, $\left(H_{\R}\right)^{C_2}$ and $\left(H_{\R}\right)^h$ have different $\R$-motivic homotopy groups, and hence, are not equivalent. As a consequence, $X^{\Psi}$ may not be equivalent to $X\wedge\left(\SR\right)^{\Psi}$ in general.

Here we list the $\R$-motivic homotopy groups of some spectra related to $H_{\R}$, which can be easily computed from their definition (where $\F[\tau]/\left(\tau^\infty\right)$ denotes the cokernel of the inclusion map $\F[\tau]\to\F[\tau^\pm]$):
\begin{equation*}
\begin{split}
&\left(H_{\R}\right)^{C_2}_{*,*}=\F[\tau,\rho]\oplus\left(\bigoplus_{j\ge0}\frac{\F[\tau]}{\left(\tau^\infty\right)}\left\{\frac{\gamma}{\rho^j}\right\}\right),\\
&\left(H_{\R}\right)^h_{*,*}=\F[\tau^{\pm},\rho],\\
&\left(H_{\R}\right)^{\Phi}_{*,*}=\F[\tau,\rho^{\pm}],\\
&\left(H_{\R}\right)^t_{*,*}=\F[\tau^{\pm},\rho^{\pm}],\\
&\left(H_{\R}\right)^{\Theta}_{*,*}=\frac{\F[\tau,\rho^{\pm}]}{\left(\tau^{\infty}\right)},\\
&\left(H_{\R}\right)^{\Psi}_{*,*}=\frac{\F[\tau,\rho]}{\left(\tau^{\infty}\right)},\\
&\left(H_{\R}\wedge\left(\SR\right)^{\Psi}\right)_{*,*}=\bigoplus_{j\ge0}\frac{\F[\tau]}{\left(\tau^\infty\right)}\left\{\frac{\gamma}{\rho^j}\right\}.
\end{split}
\end{equation*}
Note that $\F[\tau,\rho]\subset\left(H_{\R}\right)^{C_2}_{*,*}$ is called the positive cone, $\bigoplus_{j\ge0}\F[\tau]/\left(\tau^\infty\right)\left\{\gamma/\rho^j\right\}\subset\left(H_{\R}\right)^{C_2}_{*,*}$ is called the negative cone, where $\gamma/\tau$ is the image of $1/\left(\tau\rho\right)\in\left(H_{\R}\right)^{\Theta}_{1,2}$ under the connecting map $\left(H_{\R}\right)^\Theta\to\Sigma^{1,0}\left(H_{\R}\right)^{C_2}$. In this way, we have constructed the $\R$-motivic spectrum $H_{\R}\wedge\left(\SR\right)^{\Psi}$ whose $\R$-motivic homotopy groups are isomorphic to the negative cone.

\section{The Borel $C_2$-equivariant and classical Adams spectral sequences}\label{sec-borel}
In this section, we will prove that the squares \eqref{square1} and \eqref{square2} are isomorphic.

We will begin with a lemma, which will be used in Section \ref{sec-limit}.

\begin{lem}\label{pands}
There is an isomorphism
\[\pi_{i,j}^{C_2}\SC\cong\pi_{i-j-1}\PP_{-\infty}^{-j-1}.\]
\end{lem}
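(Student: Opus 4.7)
The plan is to invoke the Segal conjecture to convert the genuine $C_2$-equivariant homotopy group into a Borel/homotopy fixed-point computation, and then appeal to Spanier--Whitehead duality for stunted real projective spectra.

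First, using the representation-graded index convention together with the $2$-adic equivalence $\SC\simeq(\SC)^h$ (Segal conjecture, as recalled in Section~\ref{sec-intro}), the smash--hom adjunction rewrites
\[
\pi_{i,j}^{C_2}\SC \;\cong\; [S^{(i-j)+j\sigma},\,(\SC)^h]^{C_2} \;\cong\; [S^{i-j}\wedge S^{j\sigma}\wedge(EC_2)_+,\,\SC]^{C_2},
\]
using that $(\SC)^h=\Map((EC_2)_+,\SC)$. Since $S^{j\sigma}\wedge(EC_2)_+$ is $C_2$-free, the standard identification $[X,\SC]^{C_2}\cong[X/C_2,\mathbb{S}]$ for $C_2$-free $X$ applies; this follows by cellular induction over free $C_2$-cells from the induction--restriction adjunction $[C_{2+}\wedge A,\SC]^{C_2}\cong[A,\mathbb{S}]$.

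After pulling out the trivial sphere $S^{i-j}$, it remains to identify the orbit $(S^{j\sigma}\wedge(EC_2)_+)/C_2$. This is the Thom spectrum of $j$ copies of the tautological line bundle $\gamma$ over $BC_2=\mathbb{RP}^\infty$, which by definition is $\PP_j^\infty$ (interpreted as a formal desuspension of a Thom spectrum when $j<0$). Hence $\pi_{i,j}^{C_2}\SC\cong\pi_{i-j}(D\PP_j^\infty)$. Atiyah duality for the finite case (using $\tau_{\mathbb{RP}^n}=(n+1)\gamma-1$) gives $D\PP_j^{j+n}\simeq\Sigma\PP_{-j-n-1}^{-j-1}$, and passing to the inverse limit as $n\to\infty$ yields $D\PP_j^\infty\simeq\Sigma\PP_{-\infty}^{-j-1}$. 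Therefore
\[
\pi_{i,j}^{C_2}\SC \;\cong\; \pi_{i-j}\bigl(\Sigma\PP_{-\infty}^{-j-1}\bigr) \;=\; \pi_{i-j-1}\PP_{-\infty}^{-j-1}.
\]

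The main obstacle I foresee is the careful verification of the orbit identification $(S^{j\sigma}\wedge(EC_2)_+)/C_2\simeq\PP_j^\infty$ for all integers $j$---particularly $j<0$, which requires interpreting the stunted projective spectrum as a formal desuspension of a genuine Thom spectrum---along with checking that Spanier--Whitehead duality commutes with the inverse limit defining $\PP_{-\infty}^{-j-1}$ so that no $\lim^1$ issues intrude.
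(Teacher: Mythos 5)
Your proposal is correct and follows essentially the same chain of identifications as the paper's proof: invoke the Segal conjecture to pass to homotopy fixed points, use the smash--hom adjunction, identify the orbit with the stunted projective Thom spectrum $\PP_j^\infty$, and apply Spanier--Whitehead/Atiyah duality to land on $\Sigma\PP_{-\infty}^{-j-1}$. You simply spell out some of the details (cellular induction for the free-$C_2$ identification, the inverse-limit form of duality) that the paper delegates to references \cite{bs} and \cite{bruner}.
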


\begin{proof}
By \cite[Prop.~7.1]{bs}, we have
\[\Map\left(S^{j\sigma},\SC\right)^{C_2}\simeq\Map\left(\PP_j^{\infty}, \mathbb{S}\right).\]
By \cite[Thm.~2.14]{bruner}, we have
\[\Map\left(\PP_j^{\infty}, \mathbb{S}\right)\simeq\Map\left(\varinjlim_k\PP_j^k, \mathbb S\right)\simeq \varprojlim_k\Sigma\PP_{-k-1}^{-j-1}\simeq \Sigma\PP_{-\infty}^{-j-1}.\]
Therefore, we have
\[\pi_{i,j}^{C_2}\SC\cong\pi_{i-j}\Map\left(S^{j\sigma},\SC\right)^{C_2}\cong\pi_{i-j-1}\PP_{-\infty}^{-j-1}.\]
\end{proof}

For the lower left sides of the squares, we have

\begin{thm}\label{topabss}
The topological $a$-Bockstein spectral sequence for $\pi_{i,j}^{C_2}\SC$
\[
\bigoplus_{n=0}^{\infty}\pi_{i+n,j+n}^{C_2}Ca\Rightarrow \pi_{i,j}^{C_2}\SC
\]
is isomorphic to the topological Atiyah-Hirzebruch spectral sequence for $\pi_{i-j-1}\PP^{-j-1}_{-\infty}$, the homotopy groups of the stunted real projective spectra
\[
\bigoplus_{n=0}^{\infty}\pi_{i-j-1}S^{-j-1-n}\Rightarrow \pi_{i-j-1}\PP_{-\infty}^{-j-1}
\]
from their $E_1$-pages.
\end{thm}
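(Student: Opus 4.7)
The plan is to apply the functor $\Map(S^{j\sigma},-)^{C_2}$ to the tower of $C_2$-spectra generating the topological $a$-Bockstein spectral sequence and identify the resulting tower of classical spectra with (a suspension of) the skeletal filtration tower of $\PP_{-\infty}^{-j-1}$ that generates the topological Atiyah-Hirzebruch spectral sequence. Since both spectral sequences are the homotopy spectral sequences of their respective towers, an isomorphism of towers produces an isomorphism of spectral sequences from the $E_1$-page onward.

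First, I would present the topological $a$-Bockstein spectral sequence as the homotopy spectral sequence of the tower
\[\cdots\xrightarrow{a}\Sigma^{-(n+1),-(n+1)}\SC\xrightarrow{a}\Sigma^{-n,-n}\SC\xrightarrow{a}\cdots\xrightarrow{a}\SC\]
in $Sp^{C_2}$, whose $n$-th associated graded piece is $\Sigma^{-n,-n}Ca$, so that $E_1^n=\pi^{C_2}_{i,j}(\Sigma^{-n,-n}Ca)=\pi^{C_2}_{i+n,j+n}(Ca)$ converges to $\pi^{C_2}_{i,j}\SC$. Next, I would apply $\Map(S^{j\sigma},-)^{C_2}$ termwise. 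By the chain of equivalences in the proof of Lemma \ref{pands},
\[\Map(S^{j\sigma},\Sigma^{-n,-n}\SC)^{C_2}\simeq\Map(S^{(j+n)\sigma},\SC)^{C_2}\simeq\Sigma\PP_{-\infty}^{-j-n-1}.\]
The induced map corresponding to multiplication by $a$ is the Spanier-Whitehead dual of the Thom-space map $\PP_{j+n}^{\infty}\to\PP_{j+n+1}^{\infty}$ (coming from the inclusion $S^{(j+n)\sigma}\hookrightarrow S^{(j+n+1)\sigma}$ and $(EC_2)_+\wedge_{C_2}-$), namely the skeletal subspectrum inclusion $\Sigma\PP_{-\infty}^{-j-n-2}\hookrightarrow\Sigma\PP_{-\infty}^{-j-n-1}$. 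Hence the mapped tower is $\Sigma$ of the skeletal tower of $\PP_{-\infty}^{-j-1}$, with $n$-th graded piece $\Sigma S^{-j-1-n}$; taking $\pi_{i-j}$ yields the Atiyah-Hirzebruch tower converging to $\pi_{i-j-1}\PP_{-\infty}^{-j-1}$, with matching $E_1$-terms
\[\pi^{C_2}_{i+n,j+n}(Ca)\cong\pi_{i-j}\Sigma S^{-j-1-n}=\pi_{i-j-1}S^{-j-1-n}.\]

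The main obstacle will be verifying that multiplication by $a$ is sent \emph{precisely} to the skeletal inclusion, not to some nontrivial modification of it. This reduces to checking that the chain
\[\Map(S^{k\sigma},\SC)^{C_2}\simeq\Map((EC_2)_+\wedge_{C_2}S^{k\sigma},\mathbb{S})\simeq\Map(\PP_k^{\infty},\mathbb{S})\simeq\Sigma\PP_{-\infty}^{-k-1}\]
from Lemma \ref{pands} is natural in $k$ with respect to the inclusion $S^{k\sigma}\hookrightarrow S^{(k+1)\sigma}$ representing $a$. All of the identifications in sight (the Segal-conjecture equivalence $\SC\simeq(\SC)^h$, the free-orbit adjunction, the identification of the Thom spectrum of $k\gamma$ as $\PP_k^{\infty}$, and Atiyah duality for stunted projective spectra) are standard and functorial, so the required diagrams commute; but this naturality must be spelled out carefully to conclude that the two towers, and hence the two spectral sequences, are isomorphic.
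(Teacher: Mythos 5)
Your proposal is correct and takes essentially the same approach as the paper: both apply $\Map(S^{j\sigma},-)^{C_2}$ to the $a$-multiplication (co)fiber sequence and use the identifications from Lemma \ref{pands} (together with $Ca\simeq\Sigma^{\infty}(C_2)_+$) to match the resulting tower with the skeletal filtration of $\Sigma\PP_{-\infty}^{-j-1}$. The paper is slightly more terse — it works with a single fiber sequence rather than the full tower and does not dwell on the naturality point you flag — but the argument is the same.
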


\begin{proof}
Note that the topological $a$-Bockstein spectral sequence is induced by the $C_2$-equivariant homotopy groups of the fiber sequence
\begin{equation}\label{cafibseq}
\xymatrix{
\Sigma^{-\sigma}\SC \ar[r]^(0.58){a} & \SC \ar[r] & Ca \\
}.
\end{equation}
For $X\in Sp^{C_2}$, we have an isomorphism
\[\pi_{i,j}^{C_2}X\cong\pi_{i-j}\Map\left(S^{j\sigma},X\right)^{C_2}.\]
Thus, the $C_2$-equivariant homotopy groups of (\ref{cafibseq}) are isomorphic to the classical homotopy groups of the fiber sequence
\begin{equation}\label{mapfibseq}
\xymatrix{
\Map\left(S^{j\sigma},\Sigma^{-\sigma}\SC\right)^{C_2} \ar[r]^(0.54){a} & \Map\left(S^{j\sigma},\SC\right)^{C_2} \ar[r] & \Map\left(S^{j\sigma},Ca\right)^{C_2} \\
}.
\end{equation}
By the proof of Lemma \ref{pands}, we have
\[\Map\left(S^{j\sigma},\Sigma^{-\sigma}\SC\right)^{C_2}\simeq\Sigma\PP_{-\infty}^{-j-2},\]
and
\[\Map\left(S^{j\sigma},\SC\right)^{C_2}\simeq\Sigma\PP_{-\infty}^{-j-1}.\]
For the third term, since
\[Ca\simeq\Sigma^{1-\sigma}\textup{fib}\left(S^0\to S^\sigma\right)\simeq\Sigma^{1-\sigma}\Sigma^{\infty}\left(C_2\right)_+,\]
we have
\[\Map\left(S^{j\sigma},Ca\right)^{C_2}\simeq\Map\left(S^{(j+1)\sigma-1},\Sigma^{\infty}\left(C_2\right)_+\right)^{C_2}\simeq S^{-j}.\]
Therefore, the fiber sequence (\ref{mapfibseq}) is equivalent to the fiber sequence
\[\Sigma\PP_{-\infty}^{-j-2}\to\Sigma\PP_{-\infty}^{-j-1}\to\Sigma S^{-j-1},\]
which induces the topological Atiyah-Hirzebruch spectral sequence for $\Sigma\PP_{-\infty}^{-j-1}$ (and hence $\PP_{-\infty}^{-j-1}$). Then the result follows.
\end{proof}

Then consider the lower right sides of the squares.

\begin{thm}\label{borandlim}
The Borel Adams spectral sequence for the $C_2$-equivariant sphere
\[
\bigoplus_{n=0}^{\infty}\pi_{i,j}^{C_2}\left(H_{C_2}\wedge\overline{H_{C_2}}^{\wedge n}\right)^h \Rightarrow \pi_{i,j}^{C_2}\SC
\]
is isomorphic to the limit of classical Adams spectral sequences for the stunted projective spectra
\[
\bigoplus_{n=0}^{\infty}\pi_{i-j-1}\varprojlim\limits_k\left(\PP_{-k}^{-j-1}\wedge H_{\textup{cl}}\wedge\overline{H_{\textup{cl}}}^{\wedge n}\right) \Rightarrow \pi_{i-j-1}\PP_{-\infty}^{-j-1}
\]
from their $E_1$-pages.
\end{thm}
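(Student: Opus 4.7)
The plan is to apply the fixed-point mapping space functor $\Map(S^{j\sigma},-)^{C_2}$ uniformly to the Borel Adams tower
\[\SC^h\leftarrow\left(\overline{H_{C_2}}\right)^h\leftarrow\left(\overline{H_{C_2}}^{\wedge 2}\right)^h\leftarrow\cdots\]
and identify the resulting filtered object, term by term, with the inverse limit in $k$ of the classical $H_{\textup{cl}}$-Adams resolutions for $\PP_{-k}^{-j-1}$. Matching the resolutions matches the spectral sequences from $E_1$ onward.

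The identification is a direct generalization of the calculation in Lemma \ref{pands}. For any $C_2$-spectrum $Y$ with underlying non-equivariant spectrum $Y_e$, the adjunction and the freeness of $(EC_2)_+\wedge S^{j\sigma}$ give
\[\Map\left(S^{j\sigma},Y^h\right)^{C_2}\simeq\Map\left((EC_2)_+\wedge S^{j\sigma},Y\right)^{C_2}\simeq\Map\left(\PP_j^{\infty},Y_e\right),\]
using $(EC_2)_+\wedge_{C_2}S^{j\sigma}\simeq\PP_j^{\infty}$. Writing $\PP_j^{\infty}=\varinjlim_k\PP_j^k$ and applying Atiyah duality $D\PP_j^k\simeq\Sigma\PP_{-k-1}^{-j-1}$ to the finite truncations yields
\[\Map\left(\PP_j^{\infty},Y_e\right)\simeq\Sigma\varprojlim_k\PP_{-k-1}^{-j-1}\wedge Y_e.\]
Specializing to $Y=H_{C_2}\wedge\overline{H_{C_2}}^{\wedge n}$, whose underlying spectrum is $H_{\textup{cl}}\wedge\overline{H_{\textup{cl}}}^{\wedge n}$, and taking $\pi_{i-j}$ identifies the $E_1$-terms after reindexing $k-1\mapsto k$.

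To promote this to an isomorphism of spectral sequences, we exploit naturality. The Borel Adams resolution for $\SC$ is obtained by applying $(-)^h$ to the genuine $H_{C_2}$-Adams resolution, and the underlying non-equivariant resolution of that genuine tower is precisely the classical $H_{\textup{cl}}$-Adams resolution of $\mathbb{S}$. Under the three equivalences above, $\Map(S^{j\sigma},-)^{C_2}\circ(-)^h$ therefore sends the Borel Adams tower for $\SC$ to the inverse limit in $k$ of the classical Adams towers for $\PP_{-k}^{-j-1}$. Since boundary maps are functorial, the associated exact couples match. The abutments also match: the Borel Adams spectral sequence converges to $\pi_{i,j}^{C_2}\SC$, which equals $\pi_{i-j-1}\PP_{-\infty}^{-j-1}$ by Lemma \ref{pands}.

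The main obstacle is the Atiyah duality step $\Map(\PP_j^{\infty},-)\simeq\Sigma\varprojlim_k(\PP_{-k-1}^{-j-1}\wedge -)$: one must check that the Spanier--Whitehead dual of the filtered colimit is compatibly the inverse limit of the duals, that this equivalence is natural in the target spectrum (so that it commutes with the Adams boundary maps), and that no $\lim^1$ pathology interferes when passing to homotopy. Once this equivalence is verified, the rest of the proof is a naturality diagram chase.
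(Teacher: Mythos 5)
Your proposal is correct and follows essentially the same route as the paper: apply $\Map(S^{j\sigma},-)^{C_2}$ to the Borel Adams tower, use the adjunction for the free $C_2$-spectrum $(EC_2)_+\wedge S^{j\sigma}$ together with $(EC_2)_+\wedge_{C_2}S^{j\sigma}\simeq\PP_j^\infty$, write $\PP_j^\infty$ as a colimit of finite stunted projective spaces, and apply Spanier--Whitehead duality termwise to identify the result with the spectrum-level inverse limit $\varprojlim_k\Sigma\PP_{-k}^{-j-1}\wedge(-)$. The paper also notes, as you do, that the compatibility across the tower (the commuting square relating $\overline{H_{C_2}}^{\wedge n}$ and $H_{C_2}\wedge\overline{H_{C_2}}^{\wedge n}$) gives a map of exact couples, so the spectral sequences agree from $E_1$ on; your worry about $\lim^1$ does not arise because the limit is taken at the spectrum level before applying $\pi_*$, matching the displayed $E_1$-term.
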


\begin{proof}
Since $C_2$ acts trivially on $\SC$ and $H_{C_2}$, its action on $\overline{H_{C_2}}$ is trivial as well. Then we have the isomorphism
\begin{align*}
\pi_{i,j}^{C_2}\Map\left((EC_2)_+,\overline{H_{C_2}}^{\wedge n}\right) & \cong \pi_{i-j}\Map\left((EC_2)_+\wedge S^{j\sigma}, \overline{H_{C_2}}^{\wedge n}\right)^{C_2} \\
 & \cong \pi_{i-j}\Map\left((EC_2)_+\wedge_{C_2} S^{j\sigma},\overline{H_{\textup{cl}}}^{\wedge n}\right) \\
 & \cong \pi_{i-j}\Map\left(\PP_j^{\infty}, \overline{H_{\textup{cl}}}^{\wedge n}\right) \\
 & \cong \pi_{i-j}\Map\left(\varinjlim\limits_k\PP_j^{k-1}, \overline{H_{\textup{cl}}}^{\wedge n}\right) \\
 & \cong \pi_{i-j}\varprojlim\limits_k\Map\left(\PP_j^{k-1}, \overline{H_{\textup{cl}}}^{\wedge n}\right) \\
 & \cong \pi_{i-j-1}\varprojlim\limits_k\left(\PP_{-k}^{-j-1}\wedge\overline{H_{\textup{cl}}}^{\wedge n}\right),
\end{align*}
and a similar isomorphism for $H_{C_2}\wedge\overline{H_{C_2}}^{\wedge n}$, which are compatible in the sense that there is a commutative diagram
\[
\xymatrix{
\pi_{i,j}^{C_2}\Map\left((EC_2)_+,\overline{H_{C_2}}^{\wedge n}\right) \ar[r]^{\cong} \ar[d] & \pi_{i-j-1}\varprojlim\limits_k\left(\PP_{-k}^{-j-1}\wedge\overline{H_{\textup{cl}}}^{\wedge n}\right) \ar[d] \\
\pi_{i,j}^{C_2}\Map\left((EC_2)_+,H_{C_2}\wedge\overline{H_{C_2}}^{\wedge n}\right) \ar[r]^{\cong} & \pi_{i-j-1}\varprojlim\limits_k\left(\PP_{-k}^{-j-1}\wedge H_{\textup{cl}}\wedge\overline{H_{\textup{cl}}}^{\wedge n}\right). \\
}
\]
Therefore, there is an isomorphism between the homotopy groups of the two Adams towers, and hence an isomorphism between the two spectral sequences.
\end{proof}

For the upper left sides of the squares, by \cite{bw}, we have
\[Ca\simeq\Sigma^{\sigma-1}Ca,\]
so $\pi_{i,j}^{C_2}Ca$ is independent of $j$. We have

\begin{thm}\label{assca}
The Borel Adams spectral sequence for $\pi_{*,j}^{C_2}Ca$
\[
\bigoplus_{n=0}^{\infty}\pi_{i,j}^{C_2}\left(Ca\wedge H_{C_2}\wedge\overline{H_{C_2}}^{\wedge n}\right)^h \Rightarrow \pi_{i,j}^{C_2}Ca
\]
is isomorphic to the classical Adams spectral sequence for classical sphere
\[
\bigoplus_{n=0}^{\infty}\pi_{i-j-1}\left(S^{-j-1}\wedge H_{\textup{cl}}\wedge\overline{H_{\textup{cl}}}^{\wedge n}\right) \Rightarrow \pi_{i-j-1}S^{-j-1}
\]
from their $E_1$-pages.
\end{thm}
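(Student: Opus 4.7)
The argument will parallel that of Theorem \ref{borandlim}, but exploits the identification $Ca \simeq \Sigma^{\infty}(C_2)_+$ from \cite{bw} (already used in Theorem \ref{topabss}) to avoid the passage to a limit. Since $(C_2)_+$ is induced from the trivial subgroup, smashing with $Ca$ trivialises $C_2$-equivariance in mapping spectra via the induction--restriction adjunction, so the inverse limit over $k$ that appeared in the proof of Theorem \ref{borandlim} collapses to a single term.

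For every $n \geq 0$ the plan is to produce a natural isomorphism
\[
\pi_{i,j}^{C_2}\left(Ca \wedge H_{C_2} \wedge \overline{H_{C_2}}^{\wedge n}\right)^h \xrightarrow{\cong} \pi_{i-j-1}\left(S^{-j-1} \wedge H_{\textup{cl}} \wedge \overline{H_{\textup{cl}}}^{\wedge n}\right),
\]
together with the analogous isomorphism with $\overline{H_{C_2}}^{\wedge n}$ in place of $H_{C_2}\wedge\overline{H_{C_2}}^{\wedge n}$, compatible with the Adams tower maps. First I would unwind the bigraded group and the definition $(-)^h = \Map((EC_2)_+,-)$ to rewrite the left-hand side as
\[
\pi_{i-j}\,\Map\!\left((EC_2)_+ \wedge S^{j\sigma},\; Ca \wedge H_{C_2} \wedge \overline{H_{C_2}}^{\wedge n}\right)^{C_2}.
\]
Substituting $Ca \simeq \Sigma^{\infty}(C_2)_+$ and invoking the induction--restriction adjunction --- equivalently, the fact that $(C_2)_+ \wedge Y$ depends only on the underlying non-equivariant spectrum of $Y$, and that $(\Sigma^{\infty}(C_2)_+ \wedge Z)^{C_2} \simeq Z$ for a non-equivariant $Z$ --- collapses this to
\[
\pi_{i-j}\,\Map\!\left(S^j,\; H_{\textup{cl}} \wedge \overline{H_{\textup{cl}}}^{\wedge n}\right),
\]
using that $EC_2$ is non-equivariantly contractible, that $\sigma$ restricts to the trivial one-dimensional real representation, and that $H_{C_2}$ and $\overline{H_{C_2}}$ have underlying non-equivariant spectra $H_{\textup{cl}}$ and $\overline{H_{\textup{cl}}}$. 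The degree shift $\pi_{i-j}\Map(S^j,X) \cong \pi_i X \cong \pi_{i-j-1}(S^{-j-1}\wedge X)$ then delivers the right-hand side.

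The main obstacle --- essentially the only bookkeeping --- is verifying that these level-wise isomorphisms arise from an equivalence between the two Adams towers, so that they induce an isomorphism on $E_1$ that commutes with $d_1$ and hence with all higher differentials. For this I would note that each step above is natural in the $C_2$-spectrum playing the role of $\overline{H_{C_2}}^{\wedge n}$ or $H_{C_2}\wedge\overline{H_{C_2}}^{\wedge n}$; applying this naturality to the defining cofiber sequence $\overline{H_{C_2}} \to \SC \to H_{C_2}$ and its classical analogue $\overline{H_{\textup{cl}}} \to \mathbb{S} \to H_{\textup{cl}}$ upgrades the point-wise computation to an equivalence of Adams towers, yielding the claimed isomorphism of spectral sequences.
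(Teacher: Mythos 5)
Your proof is correct, and it takes a genuinely different route from the paper's. The paper deduces Theorem \ref{assca} as a consequence of Theorem \ref{borandlim}: it stacks the isomorphism already established there for $\pi_{*,*}^{C_2}\Map((EC_2)_+,\overline{H_{C_2}}^{\wedge n})$ in weights $j$ and $j+1$, observes that multiplication by $a$ on the left corresponds to the inclusion of stunted projective spectra on the right, and passes to cofibers (five lemma) to obtain the dashed isomorphism for $Ca$. This keeps the inverse limit over $k$ in play until it cancels at the last step. You instead use the equivalence $Ca\simeq\Sigma^{\infty}(C_2)_+$ (already invoked in Theorem \ref{topabss}) together with the induction--restriction adjunction and the Wirthm\"uller isomorphism to compute $\Map\left((EC_2)_+\wedge S^{j\sigma},\, (C_2)_+\wedge Y\right)^{C_2}\simeq\Map\left(S^j,\,\textup{Res}_e^{C_2}Y\right)$ directly, using that $\textup{Res}_e^{C_2}(EC_2)_+\simeq S^0$, $\textup{Res}_e^{C_2}S^{j\sigma}\simeq S^j$, $\textup{Res}_e^{C_2}H_{C_2}\simeq H_{\textup{cl}}$, and $\textup{Res}_e^{C_2}\overline{H_{C_2}}\simeq\overline{H_{\textup{cl}}}$. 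This is cleaner: no limit appears, and the argument is self-contained rather than a consequence of Theorem \ref{borandlim}. (It also implicitly uses, or could be streamlined by, the observation that $(Ca\wedge X)^h\simeq Ca\wedge X$ because $Ca$ is free, so the homotopy completion is superfluous.) The paper's route has the minor advantage of reusing the commutative diagram just constructed for Theorem \ref{borandlim}, making the compatibility of the isomorphism with the $a$-Bockstein and Atiyah--Hirzebruch filtrations visibly part of the same ladder; your approach proves the statement more directly but would require a separate sentence (which you do supply via naturality of the adjunctions) to see that the level-wise isomorphisms assemble into an isomorphism of Adams towers.
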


\begin{proof}
We have the commutative diagram
\[
\xymatrix{
\pi_{i+1,j+1}^{C_2}\Map\left((EC_2)_+,\overline{H_{C_2}}^{\wedge n}\right) \ar[r]^(0.53){\cong}\ar[d]_a & \pi_{i-j-1}\varprojlim\limits_k\left(\PP_{-k}^{-j-2}\wedge\overline{H_{\textup{cl}}}^{\wedge n}\right) \ar[d] \\
\pi_{i,j}^{C_2}\Map\left((EC_2)_+,\overline{H_{C_2}}^{\wedge n}\right) \ar[r]^{\cong}\ar[d] & \pi_{i-j-1}\varprojlim\limits_k\left(\PP_{-k}^{-j-1}\wedge\overline{H_{\textup{cl}}}^{\wedge n}\right) \ar[d] \\
\pi_{i,j}^{C_2}\Map\left((EC_2)_+,Ca\wedge\overline{H_{C_2}}^{\wedge n}\right) \ar@{-->}[r]^(0.55){\cong} & \pi_{i-j-1}\left(S^{-j-1}\wedge\overline{H_{\textup{cl}}}^{\wedge n}\right), \\
}
\]
and a similar one for $H_{C_2}\wedge\overline{H_{C_2}}^{\wedge n}$, which are compatible in the sense that there is a commutative diagram
\[
\xymatrix{
\pi_{i,j}^{C_2}\Map\left((EC_2)_+,Ca\wedge\overline{H_{C_2}}^{\wedge n}\right) \ar[r]^(0.55){\cong} \ar[d] & \pi_{i-j-1}\left(S^{-j-1}\wedge\overline{H_{\textup{cl}}}^{\wedge n}\right) \ar[d] \\
\pi_{i,j}^{C_2}\Map\left((EC_2)_+,Ca\wedge H_{C_2}\wedge\overline{H_{C_2}}^{\wedge n}\right) \ar[r]^(0.55){\cong} & \pi_{i-j-1}\left(S^{-j-1}\wedge H_{\textup{cl}}\wedge\overline{H_{\textup{cl}}}^{\wedge n}\right). \\
}
\]
Therefore, the two spectral sequences are isomorphic.
\end{proof}

Finally, for the upper right sides of the squares, we have

\begin{thm}\label{rhobss}
The algebraic $a$-Bockstein spectral sequence for the $E_2$-term of the Borel Adams spectral sequence for the sphere
\begin{equation}\label{abss}
\bigoplus_{n=0}^{\infty}\Ext_{\A_{*,*}^h}^{s,t+n,w+n}\left(\left(H_{C_2}\right)^h_{*,*}/a\right)\Rightarrow \Ext_{\A_{*,*}^h}^{s,t,w}\left(\left(H_{C_2}\right)^h_{*,*}\right)
\end{equation}
is isomorphic to the algebraic Atiyah-Hirzebruch spectral sequence for the classical $\Ext$ groups
\begin{equation}\label{aahss}
\bigoplus_{n=0}^{\infty}\Ext_{\A_*}^{s,t-w-1}\left(H_*S^{-w-1-n}\right) \Rightarrow \Ext_{\A_*}^{s,t-w-1}\left(\varprojlim\limits_kH_*\PP_{-k}^{-w-1}\right)
\end{equation}
from their $E_1$-pages.
\end{thm}

\begin{proof}
When we consider the $E_1$-pages of the Borel Adams spectral sequence and the limit of the classical Adams spectral sequences, we will have the commutative square
\[
\xymatrix@C=0.4cm{
\pi_{t-s+1,w+1}^{C_2}\Map\left((EC_2)_+,H_{C_2}\wedge\overline{H_{C_2}}^{\wedge s}\right) \ar[r]^(0.52){\cong}\ar[d]_a & \pi_{t-s-w-1}\varprojlim\limits_k\left(\PP_{-k}^{-w-2}\wedge H_{\textup{cl}}\wedge\overline{H_{\textup{cl}}}^{\wedge s}\right) \ar[d] \\
\pi_{t-s,w}^{C_2}\Map\left((EC_2)_+,H_{C_2}\wedge\overline{H_{C_2}}^{\wedge s}\right) \ar[r]^(0.49){\cong} & \pi_{t-s-w-1}\varprojlim\limits_k\left(\PP_{-k}^{-w-1}\wedge H_{\textup{cl}}\wedge\overline{H_{\textup{cl}}}^{\wedge s}\right). \\
}
\]
Both $d_1$'s are induced by the nonequivariant map $H_{\textup{cl}}\wedge\overline{H_{\textup{cl}}}^{\wedge s}\to\Sigma H_{\textup{cl}}\wedge\overline{H_{\textup{cl}}}^{\wedge (s+1)}$, hence the square is compatible with the differentials. Note that $H_{\textup{cl}}\wedge\overline{H_{\textup{cl}}}^{\wedge s}$ splits as the direct sum of suspensions of $H_{\textup{cl}}$, so we have
\[\pi_{t-s-w-1}\left(\PP_{-k}^{-w-1}\wedge H_{\textup{cl}}\wedge\overline{H_{\textup{cl}}}^{\wedge s}\right) \cong C^{s,t-w-1}_{\A_*}\left(H_*\PP_{-k}^{-w-1}\right).\]
Since the map
\[H_*\PP_{-k-1}^{-w-1}\to H_*\PP_{-k}^{-w-1}\]
is surjective, the Mittag-Leffler condition is satisfied, leading to the vanishing of the $\varprojlim^1$ term, so we have
\[\pi_{t-s-w-1}\varprojlim\limits_k\left(\PP_{-k}^{-w-1}\wedge H_{\textup{cl}}\wedge\overline{H_{\textup{cl}}}^{\wedge s}\right) \cong C^{s,t-w-1}_{\A_*}\left(\varprojlim\limits_kH_*\PP_{-k}^{-w-1}\right).\]
Passing to homology, we obtain the isomorphisms on the $E_2$-terms which fit into the commutative square
\[
\xymatrix{
\Ext_{\A_{*,*}^h}^{s,t+1,w+1}\left(\left(H_{C_2}\right)^h_{*,*}\right) \ar[r]^(0.46){\cong}\ar[d]_a & \Ext_{\A_*}^{s,t-w-1}\left(\varprojlim\limits_kH_*\PP_{-k}^{-w-2}\right) \ar[d] \\
\Ext_{\A_{*,*}^h}^{s,t,w}\left(\left(H_{C_2}\right)^h_{*,*}\right) \ar[r]^(0.42){\cong} & \Ext_{\A_*}^{s,t-w-1}\left(\varprojlim\limits_kH_*\PP_{-k}^{-w-1}\right). \\
}
\]
Therefore, the two spectral sequences are isomorphic.
\end{proof}

\section{On the limit of the Adams spectral sequences}\label{sec-limit}
In this section, we will give another spectral sequence converging to $\pi_*\PP_{-\infty}^*$, which is isomorphic to the limit of the Adams spectral sequences (as in Theorem \ref{borandlim}), but easier to compute.

First we will reveal some properties of $\PP_{-\infty}^{-w-1}$. By taking the inverse limit of the fiber sequences
\[\PP_{-k}^{-w-1}\to\PP_{-k}^{\infty}\to\PP_{-w}^{\infty}\]
with respect to $k$, we have the fiber sequence
\[\PP_{-\infty}^{-w-1}\to\PP_{-\infty}^{\infty}\to\PP_{-w}^{\infty},\]
where Lin (\cite{lin}) proved that there is a ($2$-adic) homotopy equivalence
\[S^{-1}\xrightarrow{\simeq}\PP_{-\infty}^{\infty}.\]
Therefore, $\PP_{-\infty}^{-w-1}$ is a bounded below spectrum of finite type.

When $w\le0$, $\PP_{-w}^{\infty}$ is $0$-connective, hence $\pi_{-1}\PP_{-w}^{\infty}=0$. Therefore, the map $\PP_{-\infty}^{\infty}\to\PP_{-w}^{\infty}$ is null, and
\[\PP_{-\infty}^{-w-1}\simeq S^{-1}\vee\Sigma^{-1}\PP_{-w}^{\infty}.\]

When $w>0$, the fiber sequence
\[\PP_{-\infty}^{-2}\to\PP_{-\infty}^{\infty}\to\PP_{-1}^{\infty}\]
implies that $\PP_{-\infty}^{-2}$ is $(-2)$-connective. By Lemma \ref{pands}, we have
\[\pi_{-2}\PP_{-\infty}^{-2}\cong\pi_{0,1}^{C_2}\SC=0,\]
where the second equality is proved in \cite[Thm.~7.6(ii)]{ai}. Thus $\PP_{-\infty}^{-2}$ is $(-1)$-connective. Then the long exact sequence on homology
\[\cdots\to H_{-1}\PP_{-\infty}^{\infty}\to H_{-1}\PP_{-1}^{\infty}\to H_{-2}\PP_{-\infty}^{-2}\to\cdots\]
implies that $\PP_{-\infty}^{\infty}\to\PP_{-1}^{\infty}$ induces an isomorphism on $H_{-1}$. Applying $H_{-1}$ to the commutative diagram
\[
\xymatrix{
\PP_{-\infty}^{\infty} \ar[r]\ar[rd] & \PP_{-w}^{\infty} \ar[d] \\
 & \PP_{-1}^{\infty}, \\
}
\]
we know that $\PP_{-\infty}^{\infty}\to\PP_{-w}^{\infty}$ induces an isomorphism on $H_{-1}$. Since $H_*\PP_{-\infty}^\infty$ is concentrated in degree $-1$, we have a short exact sequence of $\A_*$-comodules
\[0\to H_*\PP_{-\infty}^{\infty}\to H_*\PP_{-w}^{\infty}\to H_*\Sigma\PP_{-\infty}^{-w-1}\to0.\]

When $w>0$, the following theorem relates the limit of the Adams spectral sequences for $\PP_{-\infty}^{-w-1}$ appeared in Theorem \ref{borandlim} to the Adams spectral sequence for $\Sigma\PP_{-\infty}^{-w-1}$. Note that they have different constructions, where in the latter ones we simply apply the Adams resolution to the (bounded below) spectrum $\Sigma\PP_{-\infty}^{-w-1}$.

\begin{thm}\label{posw}
For $w>0$, the limit of the classical Adams spectral sequences
\[\varprojlim\limits_k E_r^{s,t-w-1}\left(\PP_{-k}^{-w-1}\right)\Rightarrow \pi_{t-s-w-1}\PP_{-\infty}^{-w-1}\]
is isomorphic to the classical Adams spectral sequence
\begin{equation}\label{ass>0}
E_r^{s-1,t-w-1}\left(\Sigma\PP_{-\infty}^{-w-1}\right)\Rightarrow \pi_{t-s-w}\Sigma\PP_{-\infty}^{-w-1}
\end{equation}
from their $E_2$-pages.
\end{thm}

\begin{proof}
For $k>w+1$, the fiber sequence
\[\PP_{-k}^{-w-1}\to\Sigma\PP_{-\infty}^{-k-1}\to\Sigma\PP_{-\infty}^{-w-1}\]
induces a short exact sequence on $H_*$. By the geometric boundary theorem (e.g. \cite{green}), there is a map of the Adams spectral sequences
\[E_r^{s-1,t-w-1}\left(\Sigma\PP_{-\infty}^{-w-1}\right)\to E_r^{s,t-w-1}\left(\PP_{-k}^{-w-1}\right).\]
By taking limits, we get the map of spectral sequences
\[E_r^{s-1,t-w-1}\left(\Sigma\PP_{-\infty}^{-w-1}\right)\to \varprojlim\limits_k E_r^{s,t-w-1}\left(\PP_{-k}^{-w-1}\right).\]
On the $E_2$-pages, the map is induced by the connecting homomorphism on $\Ext$ groups
\[\Ext_{\A_*}^{s-1,t-w-1}\left(H_*\Sigma\PP_{-\infty}^{-w-1}\right)\to \varprojlim\limits_k \Ext_{\A_*}^{s,t-w-1}\left(H_*\PP_{-k}^{-w-1}\right)\]
from the short exact sequence of $\A_*$-comodules
\[0\to\varprojlim\limits_kH_*\PP_{-k}^{-w-1}\to\varprojlim\limits_kH_*\Sigma\PP_{-\infty}^{-k-1}\to H_*\Sigma\PP_{-\infty}^{-w-1}\to0,\]
where the Mittag-Leffler condition is satisfied since the map
\[H_*\PP_{-k-1}^{-w-1}\to H_*\PP_{-k}^{-w-1}\]
is surjective. Now it suffices to show that the map (\ref{extlim}) is an isomorphism. Consider the short exact sequence of $\A_*$-comodules
\[0\to H_*\PP_{-\infty}^{\infty}\to\varprojlim\limits_kH_*\PP_{-k}^{\infty}\to\varprojlim\limits_kH_*\Sigma\PP_{-\infty}^{-k-1}\to0,\]
where the injective map induces an isomorphism on $\Ext_{\A_*}^{*,*}$, by \cite{ldma}. Therefore, 
\[\Ext_{\A_*}^{*,*}\left(\varprojlim\limits_kH_*\Sigma\PP_{-\infty}^{-k-1}\right)=0,\]
and hence, the map on the $E_2$-pages is indeed an isomorphism, which completes the proof.
\end{proof}

When $w\le0$, the following theorem suggests that the limit of the Adams spectral sequences splits as the direct sum of two classical Adams spectral sequences.

\begin{thm}\label{negw}
For $w\le0$, the limit of the classical Adams spectral sequences
\[\varprojlim\limits_k E_r^{s,t-w-1}\left(\PP_{-k}^{-w-1}\right)\Rightarrow \pi_{t-s-w-1}\PP_{-\infty}^{-w-1}\]
and the direct sum of two classical Adams spectral sequences
\begin{equation}\label{ass<=0}
E_r^{s-1,t-w-1}\left(\PP_{-w}^{\infty}\right)\oplus E_r^{s,t-w-1}\left(S^{-1}\right)\Rightarrow \pi_{t-s-w}\PP_{-w}^{\infty}\oplus\pi_{t-s-w-1}S^{-1}
\end{equation}
are isomorphic from $E_2$-pages.
\end{thm}

\begin{proof}
For $k>w+1$, the fiber sequence
\[\PP_{-k}^{-w-1}\to\PP_{-k}^{\infty}\to\PP_{-w}^{\infty}\]
induces a short exact sequence on $H_*$, and hence a long exact sequence of $\Ext$ groups. Note that we have a commutative diagram
\[
\xymatrix{
 & \varprojlim\limits_k\Ext_{\A_*}^{s,t-w-1}\left(H_*\PP_{-k}^{\infty}\right) \ar[rd] & \\
\Ext_{\A_*}^{s,t-w-1}\left(H_*\PP_{-\infty}^{\infty}\right) \ar[ru]^(0.4){\cong} \ar[rr]^0 & & \Ext_{\A_*}^{s,t-w-1}\left(H_*\PP_{-w}^{\infty}\right), \\
}
\]
where the isomorphism is proved in \cite{ldma}, and the horizontal map is trivial since it is induced from the null map
\[S^{-1}\simeq\PP_{-\infty}^{\infty}\to\PP_{-w}^{\infty}.\]
Therefore, the long exact sequence of $\Ext$ groups splits into short exact sequences
\[\begin{split}
0\to\Ext_{\A_*}^{s-1,t-w-1}\left(H_*\PP_{-w}^{\infty}\right)&\to\varprojlim\limits_k\Ext_{\A_*}^{s,t-w-1}\left(H_*\PP_{-k}^{-w-1}\right)\\
&\to\varprojlim\limits_k\Ext_{\A_*}^{s,t-w-1}\left(H_*\PP_{-k}^{\infty}\right)\to0.
\end{split}\]
Note that the maps of $\Ext$ groups are induced from maps in the fiber sequence, hence can be extended to maps of spectral sequences via the geometric boundary theorem. 

Fix a splitting
\[\PP_{-\infty}^{\infty}\to\PP_{-\infty}^{-w-1}\simeq\PP_{-\infty}^{\infty}\vee\Sigma^{-1}\PP_{-w}^{\infty}.\]
Then the composite map
\[\PP_{-\infty}^{\infty}\to\PP_{-\infty}^{-w-1}\to\PP_{-k}^{-w-1}\]
fits into the commutative diagram
\[
\xymatrix{
\PP_{-\infty}^{\infty} \ar[r]\ar[rd]\ar@/^1.5pc/[rr]^{\textup{id}} & \PP_{-\infty}^{-w-1} \ar[r]\ar[d] & \PP_{-\infty}^{\infty} \ar[d] \\
 & \PP_{-k}^{-w-1} \ar[r] & \PP_{-k}^{\infty}, \\
}
\]
and hence gives us the lifting maps that are compatible with limits with respect to $k$. Therefore, it induces a map of spectral sequences that splits the short exact sequence of the $\Ext$ groups on the $E_2$-pages:
\[
\xymatrix{
 & \Ext_{\A_*}^{s,t-w-1}\left(H_*\PP_{-\infty}^{\infty}\right) \ar@{-->}[ld] \ar[d]^{\cong} &  \\
\varprojlim\limits_k\Ext_{\A_*}^{s,t-w-1}\left(H_*\PP_{-k}^{-w-1}\right) \ar[r] & \varprojlim\limits_k\Ext_{\A_*}^{s,t-w-1}\left(H_*\PP_{-k}^{\infty}\right) \ar[r] & 0. \\
}
\]
Thus, the limit of the Adams spectral sequences splits as a direct sum of two Adams spectral sequences.
\end{proof}

These two theorems simplify the computation of the limit of the Adams spectral sequences mentioned before, and hence the computation of the Borel Adams spectral sequence. In particular, the computation of the $E_2$-term can be carried out by a computer program through the Curtis algorithm. See Section \ref{sec-names} for more information about computations.

\section{A $4\times4$ lemma}\label{sec-lemma}
In this section, we will prove a $4\times4$ lemma, which is a generalization of \cite[Lem.~9.3.2]{am17}. The results of this section will be used in Section \ref{sec-genuine}.

Let $\mathcal C$ be a stable $\infty$-category. Let $\mathcal E\subset\textup{Fun}\left(\Delta^1\times\Delta^2,\mathcal C\right)$ be the full subcategory spanned by the diagrams
\[
\xymatrix{
X \ar[r]^{f} \ar[d] & Y \ar[r] \ar[d]^{g} & 0 \ar[d] \\
0' \ar[r] & Z \ar[r]^{h} & W,
}
\]
where $0,0'\in\mathcal C$ are zero objects and both squares are pushout diagrams in $\mathcal C$. Let $e:\mathcal E \to \FC$ be the restriction to the upper left horizontal arrow, which is shown in the proof of \cite[Thm.~1.1.2.14]{ha} to be a trivial fibration. By \cite[Thm.~1.1.2.14]{ha}, the homotopy category $h\mathcal C$ is a triangulated category, where the diagram
\[X\xrightarrow{[f]}Y\xrightarrow{[g]}Z\xrightarrow{[h]}\Sigma X\]
in $h\mathcal C$ is a distinguished triangle if there exists a diagram
\[
\xymatrix{
X \ar[r]^{f} \ar[d] & Y \ar[r] \ar[d]^{g} & 0 \ar[d] \\
0' \ar[r] & Z \ar[r]^{h} & W,
}
\]
in $\mathcal C$, where $[f]$ and $[g]$ are represented by $f$ and $g$ respectively, and $[h]$ is the composition of the homotopy class of $h$ with the equivalence $W\simeq\Sigma X$ determined by the outer rectangle.

\begin{lem}\label{lem6}
Let
\[
\sigma_1:\begin{gathered}\xymatrix{
X_1^1 \ar[r]^{f_1} \ar[d] & X_1^2 \ar[r] \ar[d]^{g_1} & 0 \ar[d] \\
0 \ar[r] & X_1^3 \ar[r]^{h_1} & X_1^4,\\}\end{gathered}\quad \sigma_2:\begin{gathered}\xymatrix{
X_2^1 \ar[r]^{f_2} \ar[d] & X_2^2 \ar[r] \ar[d]^{g_2} & 0 \ar[d] \\
0 \ar[r] & X_2^3 \ar[r]^{h_2} & X_2^4, \\}\end{gathered}\]
\[\sigma^1:\begin{gathered}\xymatrix{
X_1^1 \ar[r]^{f^1} \ar[d] & X_2^1 \ar[r] \ar[d]^{g^1} & 0 \ar[d] \\
0 \ar[r] & X_3^1 \ar[r]^{h^1} & X_4^1, \\}\end{gathered}\quad \sigma^2:\begin{gathered}\xymatrix{
X_1^2 \ar[r]^{f^2} \ar[d] & X_2^2 \ar[r] \ar[d]^{g^2} & 0 \ar[d] \\
0 \ar[r] & X_3^2 \ar[r]^{h^2} & X_4^2 \\}\end{gathered}
\]
be objects in $\mathcal E$. Let $\phi:\sigma_1\to\sigma_2$ and $\psi:\sigma^1\to\sigma^2$ be morphisms in $\mathcal E$ such that the transpose of $e(\psi)$ is
\[
e(\phi):\begin{gathered}\xymatrix{
X_1^1 \ar[r]^{f_1} \ar[d]^{f^1} & X_1^2 \ar[d]^{f^2} \\
X_2^1 \ar[r]^{f_2} & X_2^2 \\
}\end{gathered}
\]
regarded as a diagram in $\mathcal C$. Consider the induced diagram in $h\mathcal C$:
\[
\xymatrix{
X_1^1 \ar[r]^{[f_1]} \ar[d]^{[f^1]} & X_1^2 \ar[r]^{[g_1]} \ar[d]^{[f^2]} & X_1^3 \ar[r]^{[h_1]} \ar[d]^{[\phi^3]} & \Sigma X_1^1 \ar[d]^{\Sigma [f^1]} \\
X_2^1 \ar[r]^{[f_2]} \ar[d]^{[g^1]} & X_2^2 \ar[r]^{[g_2]} \ar[d]^{[g^2]} & X_2^3 \ar[r]^{[h_2]} & \Sigma X_2^1 \\
X_3^1 \ar[r]^{[\psi_3]} \ar[d]^{[h^1]} & X_3^2 \ar[d]^{[h^2]} & & \\
\Sigma X_1^1 \ar[r]^{\Sigma [f_1]} & \Sigma X_1^2. & & \\
}
\]
Fix $C\in\mathcal C$ and $\alpha_3^1\in[C,X_3^1]$ such that $[h^2][\psi_3]\alpha_3^1=0$. Then there exist liftings $\alpha_2^2\in[C,X_2^2]$ of $[\psi_3]\alpha_3^1$, and $\alpha_1^3\in[C,X_1^3]$ of $[g_2]\alpha_2^2$, such that
\[[h_1]\alpha_1^3=-[h^1]\alpha_3^1\in[C,\Sigma X_1^1].\]
\end{lem}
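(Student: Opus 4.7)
The plan is to obtain $\alpha_2^2$ by a routine exactness argument and then to extract $\alpha_1^3$ from a single octahedral comparison performed inside the total cofiber of the upper-left square. Applied to $[C,-]$, the vertical distinguished triangle $X_1^2\to X_2^2\xrightarrow{[g^2]}X_3^2\xrightarrow{[h^2]}\Sigma X_1^2$ from $\sigma^2$ turns the hypothesis $[h^2][\psi_3]\alpha_3^1=0$ into the statement that $[\psi_3]\alpha_3^1$ lies in the image of $[g^2]_*$, so I can choose a lift $\alpha_2^2\in[C,X_2^2]$ with $[g^2]\alpha_2^2=[\psi_3]\alpha_3^1$.

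For the rest of the proof I introduce $Y:=\mathrm{cof}(d)$, where $d:=[f^2]\circ[f_1]=[f_2]\circ[f^1]:X_1^1\to X_2^2$, with quotient $p:X_2^2\to Y$ and boundary $\partial_Y:Y\to\Sigma X_1^1$. The octahedral axiom applied to the two factorizations of $d$ produces two distinguished triangles $X_1^3\xrightarrow{j_1}Y\xrightarrow{q_1}X_3^2$ and $X_3^1\xrightarrow{j_2}Y\xrightarrow{q_2}X_2^3$. Naturality of boundary maps along the two evident morphisms of cofiber sequences $(X_1^1\to X_1^2\to X_1^3)\to(X_1^1\to X_2^2\to Y)$ and $(X_1^1\to X_2^1\to X_3^1)\to(X_1^1\to X_2^2\to Y)$ yields $\partial_Y j_1=[h_1]$ and $\partial_Y j_2=[h^1]$. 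Identifying the iterated cofibers yields $q_1 p=[g^2]$, $q_2 p=[g_2]$, $q_1 j_2=[\psi_3]$, and $q_2 j_1=[\phi^3]$.

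Now consider $\beta:=p\alpha_2^2$ and $\beta':=j_2\alpha_3^1$ in $[C,Y]$. Since $q_1\beta=[g^2]\alpha_2^2=[\psi_3]\alpha_3^1=q_1 j_2\alpha_3^1=q_1\beta'$, the element $\beta-\beta'$ lies in $\ker q_{1*}=\mathrm{im}\,j_{1*}$ by exactness, and I may write $\beta-\beta'=j_1\alpha_1^3$ for some $\alpha_1^3\in[C,X_1^3]$. Applying $q_2$ and using $q_2 j_2=0$ gives $[\phi^3]\alpha_1^3=[g_2]\alpha_2^2$, so $\alpha_1^3$ is a lift of $[g_2]\alpha_2^2$; applying $\partial_Y$ and using $\partial_Y p=0$ gives $[h_1]\alpha_1^3=-[h^1]\alpha_3^1$, the desired sign identity.

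The main technical point is ensuring that the two octahedral decompositions share a single boundary $\partial_Y$ and that the four identifications $q_i p=\ldots$ and $q_i j_{3-i}=\ldots$ hold on the nose. In the stable $\infty$-categorical setting $\mathcal C$ this is automatic because $Y$ and all of $p,j_1,j_2,q_1,q_2,\partial_Y$ arise canonically from the commutative square as (iterated) pushouts; hence no sign ambiguity or auxiliary choice enters beyond the initial choice of $\alpha_2^2$.
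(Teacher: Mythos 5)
Your proof is correct, and it takes a genuinely different route from the paper's. The paper works with the pushout $P = X_1^2\amalg_{X_1^1}X_2^1$ of the span defining the upper-left square, builds the $3\times 3$ diagram~(\ref{kan4}) of iterated pushouts around it, uses the trivial fibration $e$ repeatedly to identify the resulting corners with $X_1^3$, $X_3^1$, $Z_1^1\simeq\Sigma X_1^1$, and then lifts $\alpha_3^1$ up to $P$ and pushes it out along the various edges. You instead center the argument on $Y = \mathrm{cof}(X_1^1\to X_2^2)$ (which, as a small terminological point, is the cofiber of the diagonal rather than the total cofiber of the square — the latter would be $\mathrm{cof}(P\to X_2^2)$), apply the octahedron twice to the two factorizations of the diagonal, and get the relevant relations by a single subtraction in $[C,Y]$ plus the long exact sequences. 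Where your approach buys clarity is that the sign in the conclusion falls out immediately from the subtraction $\beta-\beta'$ rather than needing to be traced through a transposition of $\Delta^1\times\Delta^2$ presentations of $\Sigma X_1^1$ as the paper does via \cite[Lem.~1.1.2.10]{ha}.

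The one place you should slow down is the sentence asserting that ``no sign ambiguity or auxiliary choice enters.'' That is precisely the content that the paper's machinery establishes and the content that makes the corrected statement hold where \cite[Lem.~4.5]{hhr17} failed (as the paper's remark explains). Concretely, for your computation to give $[h_1]\alpha_1^3 = -[h^1]\alpha_3^1$ rather than $+[h^1]\alpha_3^1$, you need $\partial_Y j_1 = [h_1]$ and $\partial_Y j_2 = [h^1]$ to hold simultaneously with the \emph{same} implicit identification of the target with $\Sigma X_1^1$. This does work, because the morphisms of $\Delta^1\times\Delta^2$-diagrams $\sigma_1\to\sigma_Y$ and $\sigma^1\to\sigma_Y$ you invoke both cover the identity on $X_1^1$ and both $\sigma_1$ and $\sigma^1$ are given in the statement as $\mathcal{E}$-objects with the same orientation, so no transposition (and hence no relative sign) occurs — but this should be said, not merely declared automatic. (A further small check worth recording: $q_1j_2$ and $[\psi_3]$ agree because both are the map on cofibers induced by $(f_1,f_2)$, which uses that $e(\psi)$ is the transpose of $e(\phi)$, i.e.\ that the two factorizations of $d$ are homotopy-coherently the same square; this is exactly the coherence datum the lemma's hypotheses supply.) With those remarks added, the argument is complete and arguably more economical than the paper's.
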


\begin{proof}
Starting from the diagram
\[
\xymatrix{
X_1^1 \ar[r]^{f_1} \ar[d]^{f^1} & X_1^2 \\
X_2^1, & \\
}
\]
we can apply \cite[Prop.~4.3.2.15]{htt} four times to construct the following diagram in $\mathcal C$:
\begin{equation}\label{kan4}
\begin{gathered}
\xymatrix{
X_1^1 \ar[r]^{f_1} \ar[d]^{f^1} & X_1^2 \ar[r] \ar[d]^{p_1^2} & 0 \ar[d] & \\
X_2^1 \ar[r]^{p_2^1} \ar[d] & P \ar[r]^{q_3^1} \ar[d]^{q_1^3} & Y_3^1 \ar[r] \ar[d]^{r_3^1} & 0 \ar[d] \\
0 \ar[r] & Y_1^3 \ar[r]^{r_1^3} \ar[d] & Z_1^1 \ar[r] \ar[d] & Z_2^1 \\
 & 0 \ar[r] & Z_1^2, & \\
}
\end{gathered}
\end{equation}
where each square in the diagram is a pushout. Since $e:\mathcal E\to\FC$ is a trivial fibration, we can extend the constant $3$-simplex on $f_1:X_1^1\to X_1^2$ in $\FC$ to a $3$-simplex in $\mathcal E$:
\[\def\b#1{\save[].[drr]!C="b#1"*+[F--]\frm{}\restore}
\xymatrix{
\b1 X_1^1 \ar[r]^{f_1} \ar[d] & X_1^2 \ar[r] \ar[d] & 0 \ar[d] & \b2 X_1^1 \ar[r]^{f_1} \ar[d] & X_1^2 \ar[r] \ar[d]^{g_1} & 0 \ar[d] \\
0 \ar[r] & Y_1^3 \ar[r]^{r_1^3} & Z_1^1 & 0 \ar[r] & X_1^3 \ar[r]^{h_1} & X_1^4 \\
\b3 X_1^1 \ar[r]^{f_1} \ar[d] & X_1^2 \ar[r] \ar[d] & 0 \ar[d] & \b4 X_1^1 \ar[r]^{f_1} \ar[d] & X_1^2 \ar[r] \ar[d]^{g_1} & 0 \ar[d] \\
0 \ar[r] & Y_1^3 \ar[r]^{r_1^3} & Z_1^1 & 0 \ar[r] & X_1^3 \ar[r]^{h_1} & X_1^4.
\ar "b1";"b2"^{i_1}
\ar "b2";"b3"^{j_1}
\ar "b3";"b4"^{i_1}
\ar "b1";"b3"_{id}
\ar "b2";"b4"^{id}
}
\]
Then we have morphisms $[i_1^3]:Y_1^3\to X_1^3$ and $[j_1^3]:X_1^3\to Y_1^3$ in $h\mathcal C$ that are inverse to each other. Similarly, we can extend the constant $3$-simplex on $f^1:X_1^1\to X_2^1$ in $\FC$ to a $3$-simplex in $\mathcal E$:
\[\def\b#1{\save[].[drr]!C="b#1"*+[F--]\frm{}\restore}
\xymatrix{
\b1 X_1^1 \ar[r]^{f^1} \ar[d] & X_2^1 \ar[r] \ar[d] & 0 \ar[d] & \b2 X_1^1 \ar[r]^{f^1} \ar[d] & X_2^1 \ar[r] \ar[d]^{g^1} & 0 \ar[d] \\
0 \ar[r] & Y_3^1 \ar[r]^{r_3^1} & Z_1^1 & 0 \ar[r] & X_3^1 \ar[r]^{h^1} & X_4^1 \\
\b3 X_1^1 \ar[r]^{f^1} \ar[d] & X_2^1 \ar[r] \ar[d] & 0 \ar[d] & \b4 X_1^1 \ar[r]^{f^1} \ar[d] & X_2^1 \ar[r] \ar[d]^{g^1} & 0 \ar[d] \\
0 \ar[r] & Y_3^1 \ar[r]^{r_3^1} & Z_1^1 & 0 \ar[r] & X_3^1 \ar[r]^{h^1} & X_4^1. 
\ar "b1";"b2"^{i^1}
\ar "b2";"b3"^{j^1}
\ar "b3";"b4"^{i^1}
\ar "b1";"b3"_{id}
\ar "b2";"b4"^{id}
}
\]
Consequently, we have morphisms $[i_3^1]:Y_3^1\to X_3^1$ and $[j_3^1]:X_3^1\to Y_3^1$ in $h\mathcal C$ that are inverse to each other.

Let $[l_1^3]=[i_1^3]\circ[q_1^3]:P\to X_1^3$, and $[l_3^1]=[i_3^1]\circ[q_3^1]:P\to X_3^1$ be morphisms in $h\mathcal C$. Now we claim that there is a commutative square in $h\mathcal C$:
\begin{equation}\label{diag1331}
\begin{gathered}
\xymatrix{
P \ar[r]^{[l_1^3]} \ar[d]^{[l_3^1]} & X_1^3 \ar[d]^{[h_1]} \\
X_3^1 \ar[r]^{-[h^1]} & \Sigma X_1^1.
}
\end{gathered}
\end{equation}
Note that when we let the square 
\begin{equation}\label{susp}
\begin{gathered}
\xymatrix{
X_1^1 \ar[r] \ar[d] & 0 \ar[d] \\
0 \ar[r] & Z_1^1
}
\end{gathered}
\end{equation}
appeared in (\ref{kan4}) exhibit $Z_1^1\in\mathcal C$ as the representative of $\Sigma X_1^1$, the map $[h_1]$ in (\ref{diag1331}) will be represented by the composite of $[i_1^4]^{-1}:X_1^4\to Z_1^1$ and $[h_1]:X_1^3\to X_1^4$, and the map $[h^1]$ in (\ref{diag1331}) will be represented by the composite of $-[i_4^1]^{-1}:X_4^1\to Z_1^1$ and $[h^1]:X_3^1\to X_4^1$. The minus sign appears in the latter case since the square (\ref{susp}) and the outer square of 
\[
\xymatrix{
X_1^1 \ar[r]^{f^1} \ar[d] & X_2^1 \ar[r] \ar[d] & 0 \ar[d]\\
0 \ar[r] & Y_3^1 \ar[r]^{r_3^1} & Z_1^1
}
\]
classify two morphisms in the Abelian group $\textup{Hom}_{h\mathcal C}\left(\Sigma X_1^1,Z_1^1\right)$ which are inverses of each other by \cite[Lem.~1.1.2.10]{ha}.

Since there is a diagram in $\mathcal C$:
\[
\xymatrix{
Y_1^3 \ar[r]^{i_1^3} \ar[d]^{r_1^3} & X_1^3 \ar[d]^{h_1} \\
Z_1^1 \ar[r]^{i_1^4} & X_1^4,
}
\]
we have
\[[i_1^4]^{-1}\circ[h_1]\circ[l_1^3]=[i_1^4]^{-1}\circ[h_1]\circ[i_1^3]\circ[q_1^3]=[r_1^3]\circ[q_1^3].\]
Similarly, we have
\[[i_4^1]^{-1}\circ[h^1]\circ[l_3^1]=[i_4^1]^{-1}\circ[h^1]\circ[i_3^1]\circ[q_3^1]=[r_3^1]\circ[q_3^1].\]
Furthermore, $[r_1^3]\circ[q_1^3]=[r_3^1]\circ[q_3^1]$ since there is a diagram in $\mathcal C$ (as appeared in (\ref{kan4})):
\[
\xymatrix{
P \ar[r]^{q_3^1} \ar[d]^{q_1^3} & Y_3^1 \ar[d]^{r_3^1} \\
Y_1^3 \ar[r]^{r_1^3} & Z_1^1,
}
\]
and the claim follows.

Since the square
\begin{equation}\label{po}
\begin{gathered}
\xymatrix{
X_1^1 \ar[r]^{f_1} \ar[d]^{f^1} & X_1^2 \ar[d]^{p_1^2} \\
X_2^1 \ar[r]^{p_2^1} & P
}
\end{gathered}
\end{equation}
is a pushout, there is a $2$-simplex in $\FC$:
\[
\xymatrix{
 & X_1^2 \ar[d]_(0.6){p_1^2} \ar[ddr]^{id} & \\
 & P \ar[ddr]|!{[dl];[dr]}\hole^(0.4){l_2^2} & \\
X_1^1 \ar[uur]^{f_1} \ar[rr]_{f_1} \ar[d]_{f^1} & & X_1^2 \ar[d]^{f^2} \\
X_2^1 \ar[uur]|!{[u];[urr]}\hole^(0.6){p_2^1} \ar[rr]_{f_2} & & X_2^2. 
}
\]
The diagram (\ref{kan4}) induces a morphism in $\mathcal E$
\[k_1:\begin{gathered}\xymatrix{
X_1^1 \ar[r]^{f^1} \ar[d] & X_2^1 \ar[r] \ar[d] & 0 \ar[d] \\
0 \ar[r] & Y_3^1 \ar[r]^{r_3^1} & Z_1^1\\}\end{gathered}\to\begin{gathered}\xymatrix{
X_1^2 \ar[r]^{p_1^2} \ar[d] & P \ar[r] \ar[d]^{q_3^1} & 0 \ar[d] \\
0 \ar[r] & Y_3^1 \ar[r] & Z_1^2\\}\end{gathered}\]
extending the square (\ref{po}), which is the identity restricted to $Y_3^1$. Since $e:\mathcal E\to \FC$ is a trivial fibration, we can complete the following diagram in $\mathcal E$ to a $3$-simplex:
 \[\def\b#1{\save[].[drr]!C="b#1"*+[F--]\frm{}\restore}
\xymatrix{
\b2 X_1^1 \ar[r]^{f^1} \ar[d] & X_2^1 \ar[r] \ar[d] & 0 \ar[d] & \b1 X_1^1 \ar[r]^{f^1} \ar[d] & X_2^1 \ar[r] \ar[d]^{g^1} & 0 \ar[d] \\
0 \ar[r] & Y_3^1 \ar[r]^{r_3^1} & Z_1^1 & 0 \ar[r] & X_3^1 \ar[r]^{h^1} & X_4^1 \\
\b3 X_1^2 \ar[r]^{p_1^2} \ar[d] & P \ar[r] \ar[d]^{q_3^1} & 0 \ar[d] & \b4 X_1^2 \ar[r]^{f^2} \ar[d] & X_2^2 \ar[r] \ar[d]^{g^2} & 0 \ar[d] \\
0 \ar[r] & Y_3^1 \ar[r] & Z_1^2 & 0 \ar[r] & X_3^2 \ar[r]^{h^2} & X_4^2. 
\ar "b1";"b2"_{j^1}
\ar "b2";"b3"_{k_1}
\ar "b1";"b4"^{\psi}
\ar @{-->} "b3";"b4"_{\varphi^2}
}
\]
Therefore, there exists a morphism $\varphi_3^2:Y_3^1\to X_3^2$ such that $[\psi_3]=[\varphi_3^2]\circ[j_3^1]$ and that there is a diagram:
\[
\xymatrix{
P \ar[r]^{l_2^2} \ar[d]^{q_3^1} & X_2^2 \ar[d]^{g^2} \\
Y_3^1 \ar[r]^{\varphi_3^2} & X_3^2.
}
\]
Then we have
\[[\psi_3]\circ[l_3^1]=[\varphi_3^2]\circ[j_3^1]\circ[i_3^1]\circ[q_3^1]=[\varphi_3^2]\circ[q_3^1]=[g^2]\circ[l_2^2].\]
Consequently, we have the commutative square in $h\mathcal C$:
\begin{equation}\label{diag2231}
\begin{gathered}
\xymatrix{
P \ar[r]^{[l_3^1]} \ar[d]^{[l_2^2]} & X_3^1 \ar[d]^{[\psi_3]} \\
X_2^2 \ar[r]^{[g^2]} & X_3^2.
}
\end{gathered}
\end{equation}
Similarly, we have the the commutative square in $h\mathcal C$:
\begin{equation}\label{diag1322}
\begin{gathered}
\xymatrix{
P \ar[r]^{[l_1^3]} \ar[d]^{[l_2^2]} & X_1^3 \ar[d]^{[\phi^3]} \\
X_2^2 \ar[r]^{[g_2]} & X_2^3.
}
\end{gathered}
\end{equation}

Since
\[\varphi^2:\begin{gathered}\xymatrix{X_1^2 \ar[r]^{p_1^2} \ar[d] & P \ar[r] \ar[d]^{q_3^1} & 0 \ar[d] \\ 0 \ar[r] & Y_3^1 \ar[r] & Z_1^2}\end{gathered}\to\begin{gathered}\xymatrix{X_1^2 \ar[r]^{f^2} \ar[d] & X_2^2 \ar[r] \ar[d]^{g^2} & 0 \ar[d] \\ 0 \ar[r] & X_3^2 \ar[r]^{h^2} & X_4^2}\end{gathered}\]
induces the identity morphism on $X_1^2$, the morphism $\varphi_4^2:Z_1^2\to X_4^2$ is a homotopy equivalence. Since 
\[0=[h^2][\psi_3]\alpha_3^1=[h^2][\varphi_3^2][j_3^1]\alpha_3^1,\]
$[j_3^1]\alpha_3^1$ can be lifted along $[q_3^1]$ to $\alpha\in[C,P]$, and we have
\[\alpha_3^1=[i_3^1][j_3^1]\alpha_3^1=[i_3^1][q_3^1]\alpha=[l_3^1]\alpha.\]
Let $\alpha_2^2=[l_2^2]\alpha$, and $\alpha_1^3=[l_1^3]\alpha$. We have $[g^2]\alpha_2^2=[\psi_3]\alpha_3^1$ by (\ref{diag2231}), $[\phi^3]\alpha_1^3=[g_2]\alpha_2^2$ by (\ref{diag1322}), and $[h_1]\alpha_1^3=-[h^1]\alpha_3^1$ by (\ref{diag1331}).
\end{proof}

\begin{rmk}
Lemma \ref{lem6} can be regarded as a corrected version of \cite[Lem.~4.5]{hhr17}. The counterexample of \cite[Lem.~4.5]{hhr17} is given in the footnote in \cite[p.~29]{msz}. The reason behind the existence of the counterexample is that \cite[Lem.~4.5]{hhr17} does not make use of the homotopy coherence data, which is also the reason that Lemma \ref{lem6} in our article is stated in the language of stable $\infty$-category.
\end{rmk}

\begin{cor}\label{brprop}
With the same condition as in Lemma \ref{lem6}, let $\beta_3^2\in[C,X_3^2]$ such that it can be lifted along both $[\psi_3]$ and $[g^2]$. Then
\[[h_1]\left([\phi^3]\right)^{-1}[g_2]\left([g^2]\right)^{-1}(\beta_3^2)=-[h^1]\left([\psi_3]\right)^{-1}(\beta_3^2)\]
as subsets of $[C,\Sigma X_1^1]$.
\end{cor}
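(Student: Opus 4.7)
The plan is to identify both sides of the asserted equality as cosets of the same subgroup of $[C,\Sigma X_1^1]$ and then to produce a common element via Lemma~\ref{lem6}.

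First I would observe that since $[h^2]\circ[g^2]=0$ as consecutive maps of the cofibre sequence $\sigma^2$, the hypothesis that $\beta_3^2$ lifts along $[g^2]$ forces $[h^2]\beta_3^2=0$. Hence for any $\alpha_3^1\in[\psi_3]^{-1}(\beta_3^2)$ we have $[h^2][\psi_3]\alpha_3^1=0$, and Lemma~\ref{lem6} produces lifts $\alpha_2^2\in[g^2]^{-1}(\beta_3^2)$ and $\alpha_1^3\in[\phi^3]^{-1}[g_2]\alpha_2^2$ satisfying $[h_1]\alpha_1^3=-[h^1]\alpha_3^1$. This element then lies in both of the sets appearing in the corollary.

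Next I would compute the two indeterminacies. The right-hand side $-[h^1][\psi_3]^{-1}(\beta_3^2)$ is visibly a coset of $[h^1]\ker[\psi_3]\subseteq[C,\Sigma X_1^1]$. For the left-hand side, any two choices of lift $\alpha_2^2$ differ by an element of $\ker[g^2]=\mathrm{im}[f^2]$, and the commutativity relation $[g_2]\circ[f^2]=[\phi^3]\circ[g_1]$ in the grid allows one to rewrite any difference of two admissible $\alpha_1^3$'s as $[g_1]\epsilon+k$ with $\epsilon\in[C,X_1^2]$ and $k\in\ker[\phi^3]$. Applying $[h_1]$ and using $[h_1]\circ[g_1]=0$ reduces the indeterminacy on the left-hand side to $[h_1]\ker[\phi^3]$.

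The hard part is to prove that $[h_1]\ker[\phi^3]=[h^1]\ker[\psi_3]$ as subgroups of $[C,\Sigma X_1^1]$. The inclusion $[h^1]\ker[\psi_3]\subseteq[h_1]\ker[\phi^3]$ follows by applying Lemma~\ref{lem6} to $\beta_3^2=0$ and an arbitrary $\alpha_3^1\in\ker[\psi_3]$: the resulting $\alpha_1^3$ admits a decomposition $[g_1]\epsilon+k$ with $k\in\ker[\phi^3]$, so $[h^1]\alpha_3^1=-[h_1]k$ as required. The reverse inclusion requires the symmetric counterpart of Lemma~\ref{lem6}, obtained by swapping the roles of rows and columns in the $2\times 2$ grid of cofibre sequences $(\sigma_1,\sigma_2;\sigma^1,\sigma^2)$; this holds by the same proof strategy on account of the intrinsic symmetry of the stable $\infty$-categorical setup. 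Once the indeterminacies are matched, the existence of a common element forces the two cosets to coincide, yielding the claimed equality of subsets.
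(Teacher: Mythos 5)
Your proposal is correct, but it takes a genuinely different route from the paper's proof. You establish the equality by a coset-matching argument: produce a common element of the two sets via Lemma~\ref{lem6}, compute both indeterminacies (arriving at $[h_1]\ker[\phi^3]$ on the left and $[h^1]\ker[\psi_3]$ on the right), show these subgroups coincide, and conclude. The paper instead proves the two inclusions directly: the forward inclusion as an immediate consequence of Lemma~\ref{lem6}, and the reverse inclusion by re-entering the \emph{proof} of Lemma~\ref{lem6} --- it takes the pushout $P$ and the maps $[l_1^3], [l_2^2], [l_3^1], [p_2^1]$ constructed there, lifts $\beta_1^3$ to $\pi\in[C,P]$, corrects it by a class $[p_2^1]\gamma_2^1$ so that $[l_2^2]\tilde\pi=\beta_2^2$, and sets $\beta_3^1=[l_3^1]\tilde\pi$. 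Your route only uses the \emph{statement} of Lemma~\ref{lem6}, which is cleaner, but at the price of invoking a ``transposed'' counterpart of it. That counterpart is indeed legitimate --- the hypothesis (that $e(\phi)$ is the transpose of $e(\psi)$) is symmetric in rows and columns, and the pushout $P$ of $X_1^2\leftarrow X_1^1\to X_2^1$ is symmetric in $f_1$ and $f^1$, so the same construction applies after the swap --- but you state this only by appeal to ``intrinsic symmetry,'' which a careful reader would want spelled out. One other small point: when matching indeterminacies you need the commutativity $[g_2][f^2]=[\phi^3][g_1]$ and $[g^2][f_2]=[\psi_3][g^1]$; both do hold by naturality of $\phi$ and $\psi$ in $\mathcal{E}$, but it is worth flagging explicitly since they underlie the identification of the kernels. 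In sum, the argument is sound; the paper's proof buys concreteness (explicit construction of the required lift, no appeal to symmetry), while yours buys conceptual clarity (treating Lemma~\ref{lem6} as a black box and reducing to an algebra-of-cosets fact).
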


\begin{proof}
By Lemma \ref{lem6}, we have
\[-[h^1]\left([\psi_3]\right)^{-1}(\beta_3^2)\subset[h_1]\left([\phi^3]\right)^{-1}[g_2]\left([g^2]\right)^{-1}(\beta_3^2).\]
On the other hand, suppose that we have liftings $\beta_2^2\in[C,X_2^2]$ of $\beta_3^2$, and $\beta_1^3\in[C,X_1^3]$ of $[g_2]\beta_2^2$, it suffices to show that there exists a lifting $\beta_3^1\in[C,X_3^1]$ of $\beta_3^2$ such that $[h^1]\beta_3^1=-[h_1]\beta_1^3$.

Consider the map of fiber sequences:
\[
\xymatrix@C=1cm{
X_2^1 \ar[r]^{[p_2^1]} \ar[d]_{id} & P \ar[r]^{[l_1^3]} \ar[d]_{[l_2^2]} & X_1^3 \ar[r]^{[h_2]\circ[\phi^3]} \ar[d]^{[\phi^3]} & \Sigma X_2^1 \ar[d]^{id} \\
X_2^1 \ar[r]_{[f_2]} & X_2^2 \ar[r]_{[g_2]} & X_2^3 \ar[r]_{[h_2]} & \Sigma X_2^1.
}
\]
Since
\[[h_2][\phi^3]\beta_1^3=[h_2][g_2]\beta_2^2=0,\]
we can lift $\beta_1^3$ along $[l_1^3]$ to $\pi\in[C,P]$. Since 
\[[g_2]\left(\beta_2^2-[l_2^2]\pi\right)=\beta_2^3-\beta_2^3=0,\]
we can lift $\beta_2^2-[l_2^2]\pi$ along $[f_2]$ to $\gamma_2^1\in[C,X_2^1]$. Let $\tilde{\pi}=\pi+[p_2^1]\gamma_2^1$. Then
\[[l_2^2]\tilde{\pi}=[l_2^2]\pi+[f_2]\gamma_2^1=\beta_2^2,\]
and
\[[l_1^3]\tilde{\pi}=[l_1^3]\pi+[l_1^3][p_2^1]\gamma_2^1=\beta_1^3.\]
Let $\beta_3^1=[l_3^1]\tilde{\pi}$, and we have
\[[\psi_3]\beta_3^1=[\psi_3][l_3^1]\tilde{\pi}=[g^2][l_2^2]\tilde{\pi}=\beta_3^2,\]
and
\[[h^1]\beta_3^1=[h^1][l_3^1]\tilde{\pi}=-[h_1][l_1^3]\tilde{\pi}=-[h_1]\beta_1^3.\]
Then the conclusion follows.
\end{proof}

\begin{rmk}
In this article, all spectral sequences are over $\F$. Therefore, we will ignore the sign from now on when applying Lemma \ref{lem6} and Corollary \ref{brprop}.
\end{rmk}

\section{The Borel and genuine $C_2$-equivariant Adams spectral sequences}\label{sec-genuine}
In this section, we will focus on the connection between the Borel and genuine $C_2$-equivariant Adams spectral sequences for $\SC$.

If we apply an exact functor to the $H_{\R}$-based Adams tower for $\SR$, we will get another tower, and another spectral sequence. In particular, if we apply $(-)^{\Theta}$, we will have:

\begin{lem}\label{ext0}
\[\Ext_{\A_{*,*}^{\R}}^{s,t,w}\left(\left(H_{\R}\right)^{\Theta}_{*,*}\right)=0.\]
\end{lem}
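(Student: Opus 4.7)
The plan is to apply the exact functor $(-)^{\Theta}$ to the $H_{\R}$-based Adams resolution of $\SR$ and use the Segal conjecture to force the vanishing. Since $(-)^{\Theta}$ is the cofiber of the exact functors $(-)^{C_2}$ and $(-)^{h}$ it is itself exact, and applying it to the tower
\[\cdots \to \overline{H_{\R}}^{\wedge(s+1)} \to \overline{H_{\R}}^{\wedge s} \to \cdots \to \SR\]
(whose successive cofibers are $H_{\R}\wedge\overline{H_{\R}}^{\wedge s}$) produces another tower of cofiber sequences, with successive cofibers $(H_{\R}\wedge\overline{H_{\R}}^{\wedge s})^{\Theta}$. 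I would then form the associated spectral sequence in $\pi_{*,*}^{\R}$.

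Next I would identify the $E_2$-page. Because $H_{\R}\wedge\overline{H_{\R}}^{\wedge s}$ is a free $H_{\R}$-module whose $\R$-motivic homotopy is $\bar{\A}^{\R,\otimes s}_{*,*}\otimes_{(H_{\R})_{*,*}} (H_{\R})_{*,*}$ (shifted), the functor $(-)^{\Theta}$ should be compatible with this $H_{\R}$-module structure so that, on homotopy, $\pi_{*,*}^{\R}\bigl((H_{\R}\wedge\overline{H_{\R}}^{\wedge s})^{\Theta}\bigr)\cong\bar{\A}^{\R,\otimes s}_{*,*}\otimes_{(H_{\R})_{*,*}}(H_{\R})^{\Theta}_{*,*}$. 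This identifies the $E_1$-term with the cobar complex $C^{s,t,w}_{\A^{\R}_{*,*}}\bigl((H_{\R})^{\Theta}_{*,*}\bigr)$ and consequently
\[E_2^{s,t,w}=\Ext_{\A^{\R}_{*,*}}^{s,t,w}\bigl((H_{\R})^{\Theta}_{*,*}\bigr).\]
The abutment is $\pi_{*,*}^{\R}\bigl((\SR)^{\Theta}\bigr)$, and by the Segal conjecture the map $(\SR)^{C_2}\to(\SR)^{h}$ is an equivalence, so $(\SR)^{\Theta}\simeq *$ and the spectral sequence abuts to $0$.

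The main obstacle is that convergence to zero only immediately yields $E_\infty=0$, not $E_2=0$. To close the gap I would argue that the spectral sequence collapses at $E_2$: the Adams-type filtration gives vanishing lines and strong (Mittag-Leffler) convergence, and one can compare the $(-)^{\Theta}$-tower to the $(-)^{C_2}$- and $(-)^{h}$-towers, whose spectral sequences abut to the equivalent spectra $(\SR)^{C_2}\simeq(\SR)^{h}$. The long exact sequence of spectral sequences coming from $(H_{\R})^{C_2}\to(H_{\R})^{h}\to(H_{\R})^{\Theta}$ should then force the connecting map $\Ext^{s,t,w}_{\A^{\R}_{*,*}}\bigl((H_{\R})^{C_2}_{*,*}\bigr)\to\Ext^{s,t,w}_{\A^{\R}_{*,*}}\bigl((H_{\R})^{h}_{*,*}\bigr)$ to be an isomorphism; equivalently, $\Ext_{\A^{\R}_{*,*}}^{s,t,w}\bigl((H_{\R})^{\Theta}_{*,*}\bigr)=0$, which is the desired conclusion. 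I expect the most delicate point to be the compatibility of $(-)^{\Theta}$ with the $H_{\R}$-module structure of each Adams layer (needed to get the cobar complex identification and hence strong convergence), since $(-)^{h}$ involves an $\rho$-completion that does not commute with arbitrary coproducts.
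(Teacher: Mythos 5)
Your identification of the $E_2$-page as $\Ext_{\A^{\R}_{*,*}}\bigl((H_{\R})^{\Theta}_{*,*}\bigr)$ and the abutment as $\pi_{*,*}^{\R}\bigl((\SR)^{\Theta}\bigr)=0$ is fine, and you correctly flag the real obstacle: convergence to zero gives $E_\infty=0$, not $E_2=0$. But your proposed way of bridging that gap is circular. You write that the long exact sequence coming from $(H_{\R})^{C_2}\to(H_{\R})^{h}\to(H_{\R})^{\Theta}$ ``should force'' the map $\Ext_{\A^{\R}_{*,*}}\bigl((H_{\R})^{C_2}_{*,*}\bigr)\to\Ext_{\A^{\R}_{*,*}}\bigl((H_{\R})^{h}_{*,*}\bigr)$ to be an isomorphism, ``equivalently'' $\Ext_{\A^{\R}_{*,*}}\bigl((H_{\R})^{\Theta}_{*,*}\bigr)=0$. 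That ``equivalently'' is precisely the point: asserting the map is an isomorphism is a rephrasing of the conclusion, not a derivation of it. The fact that the $(-)^{C_2}$- and $(-)^{h}$-towers abut to equivalent spectra does not imply their $E_2$-pages agree—two different Adams spectral sequences can certainly converge to the same groups with nonisomorphic $E_2$-terms—so nothing is forced. There is also a secondary problem: the map $(H_{\R})^{C_2}_{*,*}\to(H_{\R})^{h}_{*,*}$ is \emph{not} injective (the negative cone part $\bigoplus\F\{\theta/\tau^i\rho^j\}$ is in the kernel), so you do not get a short exact sequence of cobar complexes from $(H_{\R})^{C_2}\to(H_{\R})^{h}\to(H_{\R})^{\Theta}$, and hence not directly a long exact sequence of $\Ext$ groups in the way you intend.

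The paper's proof is purely algebraic and never uses the Segal conjecture or any convergence argument. It exploits the \emph{other} identification of $X^{\Theta}$ coming from the pullback square: $(H_{\R})^{\Theta}\simeq(H_{\R})^{t}\big/(H_{\R})^{\Phi}$, which yields a genuine short exact sequence on homotopy $0\to\F[\tau,\rho^{\pm}]\to\F[\tau^{\pm},\rho^{\pm}]\to(H_{\R})^{\Theta}_{*,*}\to0$, hence a short exact sequence of cobar complexes and a long exact sequence of $\Ext$ groups. The claim then reduces to showing $\Ext_{\A^{\R}_{*,*}}\bigl((H_{\R})^{\Phi}_{*,*}\bigr)\to\Ext_{\A^{\R}_{*,*}}\bigl((H_{\R})^{t}_{*,*}\bigr)$ is an isomorphism, and this is proved by an explicit computation: change-of-rings to invert $\rho$ (resp.\ $\rho$ and $\tau$), then the splitting of the $\rho$-inverted Hopf algebroid from \cite[Thm.~4.1]{di} into $\F[\rho^{\pm}]\otimes(\F,\A''_*)\otimes(\F[\tau],\F[\tau][x])$, and finally a direct calculation that the cohomology of $(\F[\tau],\F[\tau][x])$ and of $(\F[\tau^{\pm}],\F[\tau^{\pm}][x]^{\wedge}_x)$ both reduce to $\F$ via the filtration by powers of $x=\rho\tau_0$. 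If you want to salvage your outline, you need to replace the convergence heuristic with an actual algebraic computation of the relevant $\Ext$ groups, which is what the paper does.
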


\begin{proof}
By the construction of the functors $(-)^{\Phi}$, $(-)^t$, $(-)^{\Theta}$, we can see that they are exact. Applying them to the $H_{\R}$-based Adams tower for $\SR$, we will get three spectral sequences. Since $H_{\R}\wedge H_{\R}$ splits as a wedge sum of suspensions of $H_{\R}$, their $E_1$-terms can be written as
\begin{gather*}
\pi^{\R}_{t-s,w}\left(H_{\R}\wedge\overline{H_{\R}}^{\wedge s}\right)^{\Phi}\cong C_{\A_{*,*}^{\R}}^{s,t,w}\left(\left(H_{\R}\right)^{\Phi}_{*,*}\right), \\
\pi^{\R}_{t-s,w}\left(H_{\R}\wedge\overline{H_{\R}}^{\wedge s}\right)^t\cong C_{\A_{*,*}^{\R}}^{s,t,w}\left(\left(H_{\R}\right)^t_{*,*}\right), \\
\pi^{\R}_{t-s,w}\left(H_{\R}\wedge\overline{H_{\R}}^{\wedge s}\right)^{\Theta}\cong C_{\A_{*,*}^{\R}}^{s,t,w}\left(\left(H_{\R}\right)^{\Theta}_{*,*}\right). \\
\end{gather*}
Note that the map $\left(H_{\R}\right)^{\Phi}_{*,*}\to\left(H_{\R}\right)^t_{*,*}$ is the standard inclusion $\F[\rho^{\pm},\tau]\to\F[\rho^{\pm},\tau^{\pm}]$, we have the short exact sequence
\begin{align*}
0 \to \pi^{\R}_{t-s,w}\left(H_{\R}\wedge\overline{H_{\R}}^{\wedge s}\right)^{\Phi} & \to \pi^{\R}_{t-s,w}\left(H_{\R}\wedge\overline{H_{\R}}^{\wedge s}\right)^t \\
& \to \pi^{\R}_{t-s,w}\left(H_{\R}\wedge\overline{H_{\R}}^{\wedge s}\right)^{\Theta} \to 0. 
\end{align*}
The $d_1$'s in these spectral sequences are isomorphic to the differentials in reduced cobar complexes, so they are compatible with the short exact sequences, and we have the long exact sequence of $E_2$-terms
\begin{align*}
\cdots & \to \Ext_{\A_{*,*}^{\R}}^{s,t,w}\left(\left(H_{\R}\right)^{\Phi}_{*,*}\right) \to \Ext_{\A_{*,*}^{\R}}^{s,t,w}\left(\left(H_{\R}\right)^t_{*,*}\right) \\
& \to \Ext_{\A_{*,*}^{\R}}^{s,t,w}\left(\left(H_{\R}\right)^{\Theta}_{*,*}\right) \to \Ext_{\A_{*,*}^{\R}}^{s+1,t,w}\left(\left(H_{\R}\right)^{\Phi}_{*,*}\right) \to \cdots.
\end{align*}
It suffices to show that the map
\begin{equation}\label{phit}
\Ext_{\A_{*,*}^{\R}}^{s,t,w}\left(\left(H_{\R}\right)^{\Phi}_{*,*}\right) \to \Ext_{\A_{*,*}^{\R}}^{s,t,w}\left(\left(H_{\R}\right)^t_{*,*}\right)
\end{equation}
is an isomorphism. Applying the change-of-rings isomorphism \cite[Prop.~1.4]{mr77} to the maps of Hopf algebroids
\[\left(\left(H_{\R}\right)_{*,*},\A_{*,*}^{\R}\right)\to\left(\left(H_{\R}\right)_{*,*}[\rho^{-1}],\A_{*,*}^{\R}[\rho^{-1}]\right)\]
and
\[\left(\left(H_{\R}\right)_{*,*},\A_{*,*}^{\R}\right)\to\left(\left(H_{\R}\right)_{*,*}[\rho^{-1},\tau^{-1}],\left(\A_{*,*}^{\R}\right)^{\wedge}_{\rho}[\rho^{-1},\tau^{-1}]\right),\]
we have
\[\Ext_{\A_{*,*}^{\R}}^{s,t,w}\left(\left(H_{\R}\right)_{*,*},\left(H_{\R}\right)^{\Phi}_{*,*}\right)\cong\Ext_{\A_{*,*}^{\R}[\rho^{-1}]}^{s,t,w}\left(\left(H_{\R}\right)_{*,*}[\rho^{-1}],\left(H_{\R}\right)_{*,*}[\rho^{-1}]\right),\]
and
\begin{align*}
&\Ext_{\A_{*,*}^{\R}}^{s,t,w}\left(\left(H_{\R}\right)_{*,*},\left(H_{\R}\right)^t_{*,*}\right)\\
\cong{}&\Ext_{\left(\A_{*,*}^{\R}\right)^{\wedge}_{\rho}[\rho^{-1},\tau^{-1}]}^{s,t,w}\left(\left(H_{\R}\right)_{*,*}[\rho^{-1},\tau^{-1}],\left(H_{\R}\right)_{*,*}[\rho^{-1},\tau^{-1}]\right).
\end{align*}
Then the map (\ref{phit}) is isomorphic to the map between the cohomology of Hopf algebroids, which is induced from the map of Hopf algebroids
\begin{equation}\label{hopf}
\left(\left(H_{\R}\right)_{*,*}[\rho^{-1}],\A_{*,*}^{\R}[\rho^{-1}]\right)\to\left(\left(H_{\R}\right)_{*,*}[\rho^{-1},\tau^{-1}],\left(\A_{*,*}^{\R}\right)^{\wedge}_{\rho}[\rho^{-1},\tau^{-1}]\right).
\end{equation}
By \cite[Thm.~4.1]{di}, the Hopf algebroids split as
\begin{align*}
& \left(\F[\rho^{\pm}],\F[\rho^{\pm}]\right)\otimes_{\F}\left(\F,\A_*''\right)\otimes_{\F}\left(\F[\tau],\F[\tau][x]\right)\\
\to & \left(\F[\rho^{\pm}],\F[\rho^{\pm}]\right)\otimes_{\F}\left(\F,\A_*''\right)\otimes_{\F}\left(\F[\tau^{\pm}],\F[\tau^{\pm}][x]^{\wedge}_x\right),
\end{align*}
where $x=\rho\tau_0$, and
\[\A_*''=\F[\xi_1,\xi_2,\cdots]\]
is isomorphic to the classical dual Steenrod algebra with degrees suitably shifted. \cite[Lem.~4.3]{di} shows that the cohomology of $\left(\F[\tau],\F[\tau][x]\right)$ is $\F$ concentrated in homological degree $0$, where they filtered the cobar complex by powers of $x$ and deduced the differentials $d_{2^i}(\tau^{2^i})=[x^{2^i}]$. The cohomology of $\left(\F[\tau^{\pm}],\F[\tau^{\pm}][x]^{\wedge}_x\right)$ can be similarly shown to be $\F$ concentrated in homological degree $0$ as well, where there are differentials $d_{2^i}(\tau^{-2^i})=\tau^{-2^{i+1}}[x^{2^i}]$ by the Leibniz's rule. Therefore, the map (\ref{hopf}) induces an isomorphism between their cohomology, which completes the proof.
\end{proof}

By \cite{GHIR} the $E_2$-term of the genuine $C_2$-equivariant Adams spectral sequence for $\SC$ splits as the direct sum of a positive cone part and a negative cone part:
\begin{align*}
&\Ext_{\A_{*,*}^{C_2}}^{s,t,w}\left(\left(H_{C_2}\right)_{*,*},\left(H_{C_2}\right)_{*,*}\right)\cong\Ext_{\A_{*,*}^{\R}}^{s,t,w}\left(\left(H_{\R}\right)_{*,*},\left(H_{\R}\right)^{C_2}_{*,*}\right)\\
\cong{}&\Ext_{\A_{*,*}^{\R}}^{s,t,w}\left(\left(H_{\R}\right)_{*,*},\left(H_{\R}\right)_{*,*}\right)\oplus\Ext_{\A_{*,*}^{\R}}^{s,t,w}\left(\left(H_{\R}\right)_{*,*},\left(H_{\R}\wedge \left(\SR\right)^{\Psi}\right)_{*,*}\right),
\end{align*}
where the first and second direct summand are denoted by $\Ext_{\R}$ and $\Ext_{NC}$ in \cite{GHIR}.
The following theorem indicates its relation to the $E_2$-term of the Borel Adams spectral sequence for $\SC$.

\begin{thm}\label{borext}
\begin{align*}
& \Ext_{\A_{*,*}^h}^{s,t,w}\left(\left(H_{C_2}\right)^h_{*,*},\left(H_{C_2}\right)^h_{*,*}\right) \\
\cong{} & H\left(\begin{gathered}\Ext_{\A_{*,*}^{\R}}^{s,t,w}\left(\left(H_{\R}\right)_{*,*},\left(H_{\R}\right)_{*,*}\right)\\\oplus\\\Ext_{\A_{*,*}^{\R}}^{s-1,t-1,w}\left(\left(H_{\R}\right)_{*,*},\left(H_{\R}\wedge \left(\SR\right)^{\Psi}\right)_{*,*}\right)\end{gathered},\delta\right),
\end{align*}
where the shortened differential $\delta$ is $0$ on the first summand, and is the composite map
\[
\xymatrix{
\Ext_{\A_{*,*}^{\R}}^{s-1,t-1,w}\left(\left(H_{\R}\wedge \left(\SR\right)^{\Psi}\right)_{*,*}\right)\ar[r] & \Ext_{\A_{*,*}^{\R}}^{s-1,t-1,w}\left(\left(H_{\R}\right)^{C_2}_{*,*}\right) \ar[ld]^{d_2^{C_2}} \\
\Ext_{\A_{*,*}^{\R}}^{s+1,t,w}\left(\left(H_{\R}\right)^{C_2}_{*,*}\right) \ar[r] & \Ext_{\A_{*,*}^{\R}}^{s+1,t,w}\left(\left(H_{\R}\right)_{*,*}\right) \\
}
\]
on the second summand.
\end{thm}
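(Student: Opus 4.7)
The strategy is to translate the Borel $E_2$-term to the $\R$-motivic setting via $\Bec$ and to compute it using the cofiber sequences coming from the isotropy separation square. Lemma \ref{ext0} disposes of the $\Theta$-piece at the $\Ext$-level, while Corollary \ref{brprop} identifies the residual algebraic differential with the pos-projected genuine $d_2^{C_2}$.

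First I would use the $C_2$-Betti realization $\Bec$ and the isomorphism \eqref{isophi} to identify
\[\Ext^{s,t,w}_{\A^h_{*,*}}\left((H_{C_2})^h_{*,*}\right)\cong\Ext^{s,t,w}_{\A^{\R}_{*,*}}\left((H_{\R})^h_{*,*}\right),\]
and similarly the genuine $E_2$-term with $\Ext^{s,t,w}_{\A^{\R}_{*,*}}\left((H_{\R})^{C_2}_{*,*}\right)$, which splits as recalled into the positive and negative cones. Then I would apply the functors $(-)^{C_2}$ and $(-)^h$ to the $H_{\R}$-based Adams resolution of $\SR$, producing two Adams-type towers joined by the natural transformation whose cofiber is the $(-)^{\Theta}$-tower. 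Lemma \ref{ext0} says the $E_2$-page of this $(-)^{\Theta}$-tower vanishes, i.e.\ the cobar complex $C_{\A^{\R}_{*,*}}\left((H_{\R})^{\Theta}_{*,*}\right)$ is acyclic.

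Next I would use this acyclicity to chase lifts through the two towers and establish the following filtration behavior: the positive cone piece of $(H_{\R})^{C_2}$ lifts unambiguously into $(H_{\R})^h$ and stays in the same Adams filtration, while the negative cone piece is nulled on $\pi^{\R}_{*,*}$ by $(H_{\R})^{C_2}\to(H_{\R})^h$ outside its $\rho^0$-term and reappears in the Borel resolution one Adams filtration higher via a homotopy-coherent null-homotopy through the $\Theta$-piece. This accounts for the shifted placement $(s-1,t-1,w)\mapsto(s,t,w)$ of the negative cone in the theorem. To identify the residual differential $d$, I would apply Corollary \ref{brprop} to the $2\times 2$ diagram in $\Rcell^{\wedge}_2$ built from the isotropy cofiber sequence and one step of the Adams tower, with $\beta_3^2$ taken to be a lift of the given negative cone class. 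The corollary outputs two Toda-bracket-style composites in $\Ext^{s+1,t,w}(H_{\R})$ which must agree (up to a sign, ignored over $\F$): one is the algebraic boundary realizing $d$ on the Borel $E_2$-complex, and the other is the pos-projection of $d_2^{C_2}$ applied to the negative cone class.

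The hard part will be this last step. It is conceptually a natural application of Corollary \ref{brprop}, but it requires constructing the precise $2\times 2$ diagram in $\Rcell^{\wedge}_2$ so that the two output composites admit the intended interpretations. This in turn demands careful tracking of the homotopy coherence data on both sides, which is precisely why Lemma \ref{lem6} was set up in the language of stable $\infty$-categories.
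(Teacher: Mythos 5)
Your proposal correctly identifies the two key ingredients: Lemma \ref{ext0} (acyclicity of the cobar complex for $(H_{\R})^{\Theta}_{*,*}$) supplies the required vanishing, and Corollary \ref{brprop} is exactly what the paper invokes to show that the residual algebraic differential is the positive-cone projection of $d_2^{C_2}$. You also correctly flag that the latter is where the homotopy-coherence care of Lemma \ref{lem6} is needed.

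However, the decomposition you propose has a gap. You want to apply $(-)^{C_2}$ and $(-)^h$ to the Adams tower and compare the resulting towers via the cofiber $(-)^{\Theta}$-tower, then ``chase lifts'' using acyclicity of $C_{\A^{\R}_{*,*}}\bigl((H_{\R})^{\Theta}_{*,*}\bigr)$. But the map
\[
(H_{\R})^{C_2}_{*,*}=\F[\tau,\rho]\oplus\textstyle\bigoplus\F\{\theta/\tau^i\rho^j\}\longrightarrow (H_{\R})^h_{*,*}=\F[\tau^{\pm},\rho]
\]
annihilates the entire negative cone, and the map $(H_{\R})^h_{*,*}\to(H_{\R})^{\Theta}_{*,*}$ annihilates the whole $\F[\tau,\rho]$ piece, so the three cobar complexes do \emph{not} fit into a short exact sequence. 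There is consequently no long exact sequence of $\Ext$ groups to chase through. Worse, a naive reading of ``$C(\Theta)$ is acyclic'' together with the triangle $X^{C_2}\to X^h\to X^{\Theta}$ would suggest $\Ext((H_{\R})^{C_2}_{*,*})\cong\Ext((H_{\R})^h_{*,*})$, which is false: the point of the theorem is precisely that these $\Ext$ groups differ by the homology of $d$. What the paper actually does is factor the isotropy-separation comparison through $X^{\Psi}$ using the Verdier octahedron for $X\to X^{C_2}\to X^h$, obtaining \emph{two} genuine short exact sequences on cobar complexes — one from $X\to X^h\to X^{\Psi}$ (whose connecting map produces $d'$) and one from $X\wedge(\SR)^{\Psi}\to X^{\Psi}\to X^{\Theta}$ (where Lemma \ref{ext0} turns the connecting map into an isomorphism, producing the one-higher-Adams-filtration shift of the negative cone). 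Your heuristic description of the filtration behavior is the \emph{conclusion} of these two long exact sequences, not an independent argument; to supply the proof you would have to replace the single $C_2$-versus-$h$ comparison by the two-step factorization through $\Psi$, after which Corollary \ref{brprop} can be applied exactly as you anticipated.
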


\begin{proof}
Applying Verdier's octahedral axiom to $X\to X^{C_2}\to X^h$, we have
\[
\xymatrix@!C{
X \ar[r] \ar[rd] & X^{C_2} \ar[rrr] \ar[d] & & & X\wedge\left(\SR\right)^{\Psi} \ar[lldd] \\
 & X^h \ar[rd] \ar[dd] & & & \\
 & & X^{\Psi} \ar[ld] & & \\
 & X^{\Theta}. & & & \\
}
\]
Applying these functors to the $H_{\R}$-based Adams tower for $\SR$, we will get several spectral sequences.

The fiber sequence
\[H_{\R}\wedge\overline{H_{\R}}^{\wedge s}\to\left(H_{\R}\wedge\overline{H_{\R}}^{\wedge s}\right)^h\to\left(H_{\R}\wedge\overline{H_{\R}}^{\wedge s}\right)^{\Psi}\]
induces a short exact sequence of $E_1$-terms
\[0\to\pi_{t-s,w}^{\R}\left(H_{\R}\wedge\overline{H_{\R}}^{\wedge s}\right)\to\pi_{t-s,w}^{\R}\left(H_{\R}\wedge\overline{H_{\R}}^{\wedge s}\right)^h\to\pi_{t-s,w}^{\R}\left(H_{\R}\wedge\overline{H_{\R}}^{\wedge s}\right)^{\Psi}\to0,\]
which induces a long exact sequence of $E_2$-terms
\begin{align*}
\cdots & \to \Ext_{\A_{*,*}^{\R}}^{s,t,w}\left(\left(H_{\R}\right)_{*,*}\right) \to \Ext_{\A_{*,*}^{\R}}^{s,t,w}\left(\left(H_{\R}\right)^h_{*,*}\right) \\
& \to \Ext_{\A_{*,*}^{\R}}^{s,t,w}\left(\left(H_{\R}\right)^{\Psi}_{*,*}\right) \to \Ext_{\A_{*,*}^{\R}}^{s+1,t,w}\left(\left(H_{\R}\right)_{*,*}\right) \to \cdots.
\end{align*}
The fiber sequence
\[H_{\R}\wedge\overline{H_{\R}}^{\wedge s}\wedge\left(\SR\right)^{\Psi}\to\left(H_{\R}\wedge\overline{H_{\R}}^{\wedge s}\right)^{\Psi}\to\left(H_{\R}\wedge\overline{H_{\R}}^{\wedge s}\right)^{\Theta}\]
induces a short exact sequence of the $E_1$-terms
\begin{align*}
0\to\pi_{t-s,w}^{\R}\left(H_{\R}\wedge\overline{H_{\R}}^{\wedge s}\right)^{\Psi} & \to\pi_{t-s,w}^{\R}\left(H_{\R}\wedge\overline{H_{\R}}^{\wedge s}\right)^{\Theta}\\
& \to\pi_{t-s-1,w}^{\R}\left(H_{\R}\wedge\overline{H_{\R}}^{\wedge s}\wedge\left(\SR\right)^{\Psi}\right)\to0,
\end{align*}
which induces a long exact sequence of the $E_2$-terms
\begin{align*}
\cdots & \to \Ext_{\A_{*,*}^{\R}}^{s,t,w}\left(\left(H_{\R}\right)^{\Psi}_{*,*}\right) \to \Ext_{\A_{*,*}^{\R}}^{s,t,w}\left(\left(H_{\R}\right)^{\Theta}_{*,*}\right) \\
& \to \Ext_{\A_{*,*}^{\R}}^{s,t-1,w}\left(\left(H_{\R}\wedge \left(\SR\right)^{\Psi}\right)_{*,*}\right) \to \Ext_{\A_{*,*}^{\R}}^{s+1,t,w}\left(\left(H_{\R}\right)^{\Psi}_{*,*}\right) \to \cdots.
\end{align*}
By Lemma \ref{ext0}, $\Ext_{\A_{*,*}^{\R}}^{*,*,*}((H_{\R})^{\Theta}_{*,*})=0$, so the boundary map 
\[\Ext_{\A_{*,*}^{\R}}^{s,t-1,w}\left(\left(H_{\R}\wedge \left(\SR\right)^{\Psi}\right)_{*,*}\right) \to \Ext_{\A_{*,*}^{\R}}^{s+1,t,w}\left(\left(H_{\R}\right)^{\Psi}_{*,*}\right)\]
is an isomorphism. Therefore, it suffices to show that the given $\delta$ coincides with the composite map $\delta'$:
\begin{equation}\label{delta}
\begin{aligned}
\xymatrix{
\Ext_{\A_{*,*}^{\R}}^{s-1,t-1,w}\left(\left(H_{\R}\wedge \left(\SR\right)^{\Psi}\right)_{*,*}\right) \ar[r] \ar[rd]_{\delta'} & \Ext_{\A_{*,*}^{\R}}^{s,t,w}\left(\left(H_{\R}\right)^{\Psi}_{*,*}\right) \ar[d] \\
 & \Ext_{\A_{*,*}^{\R}}^{s+1,t,w}\left(\left(H_{\R}\right)_{*,*}\right).
}
\end{aligned}
\end{equation}
The proof for this is essentially the same as the proof of \cite[Thm.~1.1]{br}, where \cite[Prop.~2.3]{br} is generalized to Corollary \ref{brprop} to fit into our case. Here we give a sketch:

In Corollary \ref{brprop}, set up the diagram as below:
\[
\xymatrix@C=.5cm{
\left(\frac{\overline{H_{\R}}^{\wedge (s+1)}}{\overline{H_{\R}}^{\wedge (s+2)}}\right)^h \ar[r]^{f} \ar[d]^{i} & \left(\frac{\overline{H_{\R}}^{\wedge s}}{\overline{H_{\R}}^{\wedge (s+2)}}\right)^h \ar[r]^{g} \ar[d]^{j} & \left(\frac{\overline{H_{\R}}^{\wedge s}}{\overline{H_{\R}}^{\wedge (s+1)}}\right)^h \ar[r]^(0.45){h} \ar[d]^{k} & \Sigma^{1,0}\left(\frac{\overline{H_{\R}}^{\wedge (s+1)}}{\overline{H_{\R}}^{\wedge (s+2)}}\right)^h \ar[d]^{i} \\
\left(\frac{\overline{H_{\R}}^{\wedge (s+1)}}{\overline{H_{\R}}^{\wedge (s+2)}}\right)^{\Theta} \ar[r]^{f'} \ar[d]^{i'} & \left(\frac{\overline{H_{\R}}^{\wedge s}}{\overline{H_{\R}}^{\wedge (s+2)}}\right)^{\Theta} \ar[r]^{g'} \ar[d]^{j'} & \left(\frac{\overline{H_{\R}}^{\wedge s}}{\overline{H_{\R}}^{\wedge (s+1)}}\right)^{\Theta} \ar[r]^(0.45){h'} & \Sigma^{1,0}\left(\frac{\overline{H_{\R}}^{\wedge (s+1)}}{\overline{H_{\R}}^{\wedge (s+2)}}\right)^{\Theta}\\
\Sigma^{1,0}\left(\frac{\overline{H_{\R}}^{\wedge (s+1)}}{\overline{H_{\R}}^{\wedge (s+2)}}\right)^{C_2} \ar[r]^{f''} \ar[d]^{i''} & \Sigma^{1,0}\left(\frac{\overline{H_{\R}}^{\wedge s}}{\overline{H_{\R}}^{\wedge (s+2)}}\right)^{C_2} \ar[d]^{j''} \\
\Sigma^{1,0}\left(\frac{\overline{H_{\R}}^{\wedge (s+1)}}{\overline{H_{\R}}^{\wedge (s+2)}}\right)^h \ar[r]^{f} & \Sigma^{1,0}\left(\frac{\overline{H_{\R}}^{\wedge s}}{\overline{H_{\R}}^{\wedge (s+2)}}\right)^h. \\
}
\]
Let $C$ be $S_{\R}^{t-s,w}$. Let 
\[x\in\pi_{t-s,w}^{\R}\left(\overline{H_{\R}}^{\wedge (s-1)}\middle/\overline{H_{\R}}^{\wedge s}\right)\wedge\left(\SR\right)^{\Psi}\]
represent the element
\[[x]\in\Ext_{\A_{*,*}^{\R}}^{s-1,t-1,w}\left(\left(H_{\R}\wedge \left(\SR\right)^{\Psi}\right)_{*,*}\right).\]
Consider the diagram
\[
\xymatrix{
\pi^{\R}_{t-s+1,w}\left(\frac{\overline{H_{\R}}^{\wedge (s-1)}}{\overline{H_{\R}}^{\wedge s}}\right)^{\Theta} \ar^{\beta}[r] \ar@{->>}_{\alpha_1}[d] & \pi^{\R}_{t-s,w}\left(\frac{\overline{H_{\R}}^{\wedge s}}{\overline{H_{\R}}^{\wedge (s+2)}}\right)^{\Theta} \ar^{[C,j']}[dd] \\
\pi^{\R}_{t-s,w}\left(\frac{\overline{H_{\R}}^{\wedge (s-1)}}{\overline{H_{\R}}^{\wedge s}}\right)\wedge\left(\SR\right)^{\Psi} \ar@{^(->}_{\alpha_2}[d] & \\
\pi^{\R}_{t-s,w}\left(\frac{\overline{H_{\R}}^{\wedge (s-1)}}{\overline{H_{\R}}^{\wedge s}}\right)^{C_2} \ar^{\beta'}[r] & \pi^{\R}_{t-s-1,w}\left(\frac{\overline{H_{\R}}^{\wedge s}}{\overline{H_{\R}}^{\wedge (s+2)}}\right)^{C_2}. \\
}
\]
Let $\omega=\beta'\alpha_2x\in[C,X_3^2]$. Then $d_2^{C_2}([\alpha_2x])$ can be represented by an arbitrary element in $(f'')^{-1}(\omega)$. Given the factorizations
\[
\xymatrix@C=0.48cm{
[C,i'']:\pi^{\R}_{t-s-1,w}\left(\frac{\overline{H_{\R}}^{\wedge (s+1)}}{\overline{H_{\R}}^{\wedge (s+2)}}\right)^{C_2}\ar@{->>}^(0.59){i_1''}[r] & \pi^{\R}_{t-s-1,w}\left(\frac{\overline{H_{\R}}^{\wedge (s+1)}}{\overline{H_{\R}}^{\wedge (s+2)}}\right)\ar@{^(->}^(0.48){i_2''}[r] & \pi^{\R}_{t-s-1,w}\left(\frac{\overline{H_{\R}}^{\wedge (s+1)}}{\overline{H_{\R}}^{\wedge (s+2)}}\right)^h, \\
}
\]
and
\[
\xymatrix{
[C,k]:\pi^{\R}_{t-s,w}\left(\frac{\overline{H_{\R}}^{\wedge s}}{\overline{H_{\R}}^{\wedge (s+1)}}\right)^h\ar@{->>}^(0.55){k_1}[r] & \pi^{\R}_{t-s,w}\left(\frac{\overline{H_{\R}}^{\wedge s}}{\overline{H_{\R}}^{\wedge (s+1)}}\right)^{\Psi}\ar@{^(->}^(0.49){k_2}[r] & \pi^{\R}_{t-s,w}\left(\frac{\overline{H_{\R}}^{\wedge s}}{\overline{H_{\R}}^{\wedge (s+1)}}\right)^h, \\
}
\]
we can see that $\delta([x])$ is represented by any element in $i_1''(f'')^{-1}(\omega)$. On the other hand, by the construction of the boundary maps, any element in
\[\left(k_2\right)^{-1}g'\beta\left(\alpha_1\right)^{-1}(x)\subset\left(k_2\right)^{-1}g'\left(j'\right)^{-1}(\omega)\]
can represent the image of $[x]$ under the boundary map
\[\Ext_{\A_{*,*}^{\R}}^{s-1,t-1,w}\left(\left(H_{\R}\wedge \left(\SR\right)^{\Psi}\right)_{*,*}\right)\to\Ext_{\A_{*,*}^{\R}}^{s,t,w}\left(\left(H_{\R}\right)^{\Psi}_{*,*}\right).\]
Then an element $y\in\left(i_2''\right)^{-1}hk^{-1}g'\left(j'\right)^{-1}(\omega)$ representing $\delta'([x])$. By Corollary \ref{brprop},
\[i_2''y\in hk^{-1}g'\left(j'\right)^{-1}(\omega)=i''\left(f''\right)^{-1}(\omega).\]
By the injectivity of $i_2''$, $y\in i_1''(f'')^{-1}(\omega)$, and hence represent $\delta([x])$ as well. Therefore, $\delta'$ and $\delta$ are the same map.
\end{proof}

\begin{rmk}
The composite map $d$ is not trivial. For example, 
\[\Ext_{\A_{*,*}^{\R}}^{9,23,11}\left(\left(H_{\R}\right)^h_{*,*}\right)=0,\]
while
\[\Ext_{\A_{*,*}^{\R}}^{9,23,11}\left(\left(H_{\R}\right)_{*,*}\right)=\F\left\{h_1^6c_0\right\}\]
is nontrivial (c.f. charts in Section \ref{sec-names} and \cite{gi}). This implies that $h_1^6c_0$ must be a target under $d$. In fact, from the charts in \cite{gi}, we can see that 
\[\Ext_{\A_{*,*}^{\R}}^{7,22,11}\left(\left(H_{\R}\wedge\left(\SR\right)^{\Psi}\right)_{*,*}\right)=\F\left\{\rho^{-2}QPh_1^4\right\},\]
and thus $\delta(\rho^{-2}QPh_1^4)=h_1^6c_0$. This can also be seen from \cite[Table~11.3]{gi}.
\end{rmk}

The theorem above indicates that the $d_2$'s in the genuine $C_2$-equivariant Adams spectral sequence connecting the negative cone part and the positive cone part are shortened in the Borel Adams spectral sequence, such that the sources and targets are cancelled before the $E_2$-page. These differentials will be called shortened differentials.

Consider the maps from a fiber sequence
\[X^h\xrightarrow{i}X^{\Theta}\xrightarrow{p}\Sigma^{1,0}X^{C_2}\xrightarrow{j}\Sigma^{1,0}X^h.\]
The following theorem describes the phenomenon of shortened differentials in higher pages.

\begin{thm}\label{shortd}
Let $z\in\pi^{\R}_{t-s,w}\left(\overline{H_{\R}}^{\wedge (s-1)}\middle/\overline{H_{\R}}^{\wedge s}\right)^{C_2}$ be an element with $j_*z=0$. Suppose $z$ supports a nontrivial differential $d_r^{C_2}([z])=[z']$, where $r\ge3$ and $z'\in E_1^{C_2}$. Then there exists an element $x\in\pi^{\R}_{t-s,w}\left(\overline{H_{\R}}^{\wedge s}\middle/\overline{H_{\R}}^{\wedge (s+1)}\right)^h$ surviving to $E_2^h$, satisfying that the image of $[x]\in E_2^h$ under the map
\[\Ext_{\A_{*,*}^{\R}}^{*,*,*}\left(\left(H_{\R}\right)^h_{*,*}\right)\to\Ext_{\A_{*,*}^{\R}}^{*,*,*}\left(\left(H_{\R}\right)^{\Psi}_{*,*}\right)\cong\Ext_{\A_{*,*}^{\R}}^{*-1,*-1,*}\left(\left(H_{\R}\wedge\left(\SR\right)^{\Psi}\right)_{*,*}\right)\]
coincides with $[z]$, and that
\[d_{r-1}^h\left([x]\right)=[j_*z'].\]
\end{thm}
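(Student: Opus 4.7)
The plan is to adapt the proof of Theorem \ref{borext} to longer Adams filtrations. First, the hypothesis $d_r^{C_2}([z])=[z']$ provides a lift $\bar z\in\pi^{\R}_{t-s,w}(\overline{H_{\R}}^{\wedge(s-1)}/\overline{H_{\R}}^{\wedge(s+r-1)})^{C_2}$ of $z$ whose cofiber boundary in $\pi^{\R}_{t-s-1,w}(\overline{H_{\R}}^{\wedge(s+r-1)}/\overline{H_{\R}}^{\wedge(s+r)})^{C_2}$ represents $z'$. Applying the natural transformation $(-)^{C_2}\to(-)^h$, one obtains $j_*\bar z$; the hypothesis $j_*z=0$ forces its image in $\pi^{\R}_{t-s,w}(\overline{H_{\R}}^{\wedge(s-1)}/\overline{H_{\R}}^{\wedge s})^h$ to vanish, so the long exact sequence of the cofiber sequence
\[
\overline{H_{\R}}^{\wedge s}/\overline{H_{\R}}^{\wedge(s+r-1)}\to\overline{H_{\R}}^{\wedge(s-1)}/\overline{H_{\R}}^{\wedge(s+r-1)}\to\overline{H_{\R}}^{\wedge(s-1)}/\overline{H_{\R}}^{\wedge s}
\]
produces a lift $\xi\in\pi^{\R}_{t-s,w}(\overline{H_{\R}}^{\wedge s}/\overline{H_{\R}}^{\wedge(s+r-1)})^h$ of $j_*\bar z$. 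I define $x$ to be the image of $\xi$ in $\pi^{\R}_{t-s,w}(\overline{H_{\R}}^{\wedge s}/\overline{H_{\R}}^{\wedge(s+1)})^h$.

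Because $\xi$ lifts $x$ along the Borel Adams tower all the way up to filtration $s+r-1$, the class $[x]$ is a $d_k^h$-cycle for every $k\leq r-2$ and so descends to $E_{r-1}^h$, in particular to $E_2^h$ since $r\geq 3$. By definition of the Adams differential, $d_{r-1}^h([x])$ is represented by the image of the cofiber boundary $\partial\xi$ in $\pi^{\R}_{t-s-1,w}(\overline{H_{\R}}^{\wedge(s+r-1)}/\overline{H_{\R}}^{\wedge(s+r)})^h$. Since these boundaries are natural with respect to the map $\overline{H_{\R}}^{\wedge s}/\overline{H_{\R}}^{\wedge(s+r-1)}\to\overline{H_{\R}}^{\wedge(s-1)}/\overline{H_{\R}}^{\wedge(s+r-1)}$, one computes $\partial\xi=j_*(\partial\bar z)=j_*z'$, yielding $d_{r-1}^h([x])=[j_*z']$.

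The last and main obstacle is to verify that $[x]$ corresponds to $[z]$ under the isomorphism of Theorem \ref{borext}, i.e., that the image of $[x]$ in $\Ext^{s,t,w}_{\A_{*,*}^{\R}}((H_{\R})^{\Psi}_{*,*})$ equals the class obtained from $[z]\in\Ext^{s-1,t-1,w}_{\A_{*,*}^{\R}}((H_{\R}\wedge(\SR)^{\Psi})_{*,*})$ via the connecting isomorphism supplied by Lemma \ref{ext0}. My plan is to handle this by applying Corollary \ref{brprop} to a diagram of the same shape as the one used in the proof of Theorem \ref{borext}, with rows indexed by the functors $(-)^h, (-)^{\Theta}, \Sigma(-)^{C_2}, \Sigma(-)^h$ and horizontal cofiber sequence
\[
\overline{H_{\R}}^{\wedge s}/\overline{H_{\R}}^{\wedge(s+1)}\to\overline{H_{\R}}^{\wedge(s-1)}/\overline{H_{\R}}^{\wedge(s+1)}\to\overline{H_{\R}}^{\wedge(s-1)}/\overline{H_{\R}}^{\wedge s}.
\]
The lift $\tilde z\in\pi^{\R}_{t-s+1,w}(\overline{H_{\R}}^{\wedge(s-1)}/\overline{H_{\R}}^{\wedge s})^{\Theta}$ of $z$ under the connecting map $\pi((-)^{\Theta})\to\pi(\Sigma(-)^{C_2})$, which exists precisely because $j_*z=0$, serves as the input to the corollary, and the coherence produced by Corollary \ref{brprop} then yields the desired identification of Ext classes.
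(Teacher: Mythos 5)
Your proposal takes a related but genuinely different route from the paper's proof. The paper applies Lemma~\ref{lem6} to the octahedral diagram in order to \emph{simultaneously} construct $y$ (a lift of $z$ along $p_*$) and $x$ (a lift of $d_1^{\Theta}(y)$ along $i_*$) satisfying the coherence $[h_1]x=\pm[h^1]\beta$, so that the correspondence $[x]\leftrightarrow[z]$ holds by construction, and then computes $d_{r-1}^h$ by a diagram chase. You instead define $x$ directly as the image of a lift $\xi$ of $j_*\bar z$ in the Borel Adams tower and compute $d_{r-1}^h([x])=[j_*z']$ from naturality of the cofiber boundaries; this part is clean and correct. The genuine gap is in your step~4. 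Corollary~\ref{brprop} is a statement about equality of \emph{sets} of lifts, and invoking it as you describe does not by itself say anything about the particular $\xi$-derived class $[x]$, which a priori even depends on the choice of genuine Adams lift $\bar z$. What is missing is the following chain of observations: the commuting square of Adams-filtration projections gives $[h_1]x=j_*\beta$, where $\beta$ denotes the image of $\bar z$ in $\pi^{\R}_{t-s,w}\left(\overline{H_{\R}}^{\wedge(s-1)}\middle/\overline{H_{\R}}^{\wedge(s+1)}\right)^{C_2}$; Corollary~\ref{brprop} applied with $\beta_3^2=z$ shows that $j_*\beta$ also equals $[h_1]x'''$ for some $x'''$ that \emph{is} compatible with a $\Theta$-lift $y$ of $z$ (i.e.\ $i_*x'''=d_1^{\Theta}(y)$, $p_*y=z$); and since $\ker[h_1]$ in $\pi^{\R}_{t-s,w}\left(\overline{H_{\R}}^{\wedge s}\middle/\overline{H_{\R}}^{\wedge(s+1)}\right)^h$ is exactly the image of $d_1^h$, one gets $[x]=[x''']$ in $E_2^h$, and $[x''']$ maps to $[z]$ by the same chase as in the proof of Theorem~\ref{borext}. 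Without this argument the identification of Ext classes is not established for your $x$; with it, your approach works and is essentially parallel to the paper's, using the same Lemma~\ref{lem6}/Corollary~\ref{brprop} machinery but in a different order.
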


The claimed differentials will also be referred to as the shortened differentials.

\begin{proof}
Since $z$ supports a nontrivial differential $d_r^{C_2}$, it can be lifted to the elements $\beta\in\pi^{\R}_{t-s,w}\left(\overline{H_{\R}}^{\wedge (s-1)}\middle/\overline{H_{\R}}^{\wedge (s+1)}\right)^{C_2}$ and $\tilde{\beta}\in\pi^{\R}_{t-s,w}\left(\overline{H_{\R}}^{\wedge (s-1)}\middle/\overline{H_{\R}}^{\wedge (s+r)}\right)^{C_2}$. Then we can apply Lemma \ref{lem6} to the following diagram:
\[
\xymatrix{
\Sigma^{-1,0}\left(\frac{\overline{H_{\R}}^{\wedge (s-1)}}{\overline{H_{\R}}^{\wedge (s+1)}}\right)^h \ar[r] \ar[d] & \Sigma^{-1,0}\left(\frac{\overline{H_{\R}}^{\wedge (s-1)}}{\overline{H_{\R}}^{\wedge s}}\right)^h \ar[r] \ar[d] & \left(\frac{\overline{H_{\R}}^{\wedge s}}{\overline{H_{\R}}^{\wedge (s+1)}}\right)^h \ar[r] \ar[d]^i & \left(\frac{\overline{H_{\R}}^{\wedge (s-1)}}{\overline{H_{\R}}^{\wedge (s+1)}}\right)^h \ar[d] \\
\Sigma^{-1,0}\left(\frac{\overline{H_{\R}}^{\wedge (s-1)}}{\overline{H_{\R}}^{\wedge (s+1)}}\right)^{\Theta} \ar[r] \ar[d] & \Sigma^{-1,0}\left(\frac{\overline{H_{\R}}^{\wedge (s-1)}}{\overline{H_{\R}}^{\wedge s}}\right)^{\Theta} \ar[r]^(0.58){d_1^{\Theta}} \ar[d]^p & \left(\frac{\overline{H_{\R}}^{\wedge s}}{\overline{H_{\R}}^{\wedge (s+1)}}\right)^{\Theta} \ar[r] & \left(\frac{\overline{H_{\R}}^{\wedge (s-1)}}{\overline{H_{\R}}^{\wedge (s+1)}}\right)^{\Theta} \\
\left(\frac{\overline{H_{\R}}^{\wedge (s-1)}}{\overline{H_{\R}}^{\wedge (s+1)}}\right)^{C_2} \ar[r] \ar[d]^j & \left(\frac{\overline{H_{\R}}^{\wedge (s-1)}}{\overline{H_{\R}}^{\wedge s}}\right)^{C_2} \ar[d] \\
\left(\frac{\overline{H_{\R}}^{\wedge (s-1)}}{\overline{H_{\R}}^{\wedge (s+1)}}\right)^h \ar[r] & \left(\frac{\overline{H_{\R}}^{\wedge (s-1)}}{\overline{H_{\R}}^{\wedge s}}\right)^h. \\
}
\]
Let $y$ be the chosen lift of $z$ along $p_*$, and $x$ be the chosen lift of $d_1^{\Theta}(y)$ along $i_*$. Then we have the following diagram:
\[
\xymatrix@C=1cm{
j_*z' & \pi^{\R}_{t-s-1,w}\left(\frac{\overline{H_{\R}}^{\wedge (s+r-1)}}{\overline{H_{\R}}^{\wedge (s+r)}}\right)^h & \pi^{\R}_{t-s-1,w}\left(\frac{\overline{H_{\R}}^{\wedge (s+r-1)}}{\overline{H_{\R}}^{\wedge (s+r)}}\right)^{C_2}\ar[l]_{j_*} & z' \\
j_*\tilde{\beta} & \pi^{\R}_{t-s,w}\left(\frac{\overline{H_{\R}}^{\wedge (s-1)}}{\overline{H_{\R}}^{\wedge (s+r-1)}}\right)^h \ar[u]\ar[d] & \pi^{\R}_{t-s,w}\left(\frac{\overline{H_{\R}}^{\wedge (s-1)}}{\overline{H_{\R}}^{\wedge (s+r-1)}}\right)^{C_2} \ar[l]_{j_*} \ar[u]\ar[d] & \tilde{\beta} \\
j_*\beta & \pi^{\R}_{t-s,w}\left(\frac{\overline{H_{\R}}^{\wedge (s-1)}}{\overline{H_{\R}}^{\wedge (s+1)}}\right)^h & \pi^{\R}_{t-s,w}\left(\frac{\overline{H_{\R}}^{\wedge (s-1)}}{\overline{H_{\R}}^{\wedge (s+1)}}\right)^{C_2} \ar[l]_{j_*} \ar[d] & \beta \\
x & \pi^{\R}_{t-s,w}\left(\frac{\overline{H_{\R}}^{\wedge s}}{\overline{H_{\R}}^{\wedge (s+1)}}\right)^h \ar@{-->}@/^5.5pc/[uuu]^{d_{r-1}^h} \ar[u]\ar[d]_{i_*} \ar@{}[r]|*++=[o][F]{\txt{Lemma\\\phantom{i}\ref{lem6}\phantom{i}}} & \pi^{\R}_{t-s,w}\left(\frac{\overline{H_{\R}}^{\wedge (s-1)}}{\overline{H_{\R}}^{\wedge s}}\right)^{C_2} \ar@{-->}@/_5.5pc/[uuu]_{d_r^{C_2}} & z \\
d_1^{\Theta}(y) & \pi^{\R}_{t-s,w}\left(\frac{\overline{H_{\R}}^{\wedge s}}{\overline{H_{\R}}^{\wedge (s+1)}}\right)^{\Theta} & \pi^{\R}_{t-s+1,w}\left(\frac{\overline{H_{\R}}^{\wedge (s-1)}}{\overline{H_{\R}}^{\wedge s}}\right)^{\Theta}. \ar[l]^{d_1^{\Theta}} \ar[u]_{p_*} & y \\
}
\]
Therefore, $d_{r-1}^h\left([x]\right)=[j_*z']$. On the other hand, consider the diagram 
\[
\xymatrix{
x & \pi^{\R}_{t-s,w}\left(\frac{\overline{H_{\R}}^{\wedge s}}{\overline{H_{\R}}^{\wedge (s+1)}}\right)^h \ar[r]\ar[rd]^{i_*} & \pi^{\R}_{t-s,w}\left(\frac{\overline{H_{\R}}^{\wedge s}}{\overline{H_{\R}}^{\wedge (s+1)}}\right)^{\Psi} \ar[d] & \\
y & \pi^{\R}_{t-s+1,w}\left(\frac{\overline{H_{\R}}^{\wedge s}}{\overline{H_{\R}}^{\wedge (s+1)}}\right)^{\Theta} \ar[r]^{d_1^{\Theta}} \ar[rd] \ar[d]_{p_*} & \pi^{\R}_{t-s,w}\left(\frac{\overline{H_{\R}}^{\wedge s}}{\overline{H_{\R}}^{\wedge (s+1)}}\right)^{\Theta} & i_*x \\
z & \pi^{\R}_{t-s,w}\left(\frac{\overline{H_{\R}}^{\wedge (s-1)}}{\overline{H_{\R}}^{\wedge s}}\right)^{C_2} \ar[r] & \pi^{\R}_{t-s,w}\left(\frac{\overline{H_{\R}}^{\wedge (s-1)}}{\overline{H_{\R}}^{\wedge s}}\wedge\left(\SR\right)^{\Psi}\right), \ar@{-->}@/_5pc/[uu] & \left([z]\right) \\
}
\]
where the dashed line induces the boundary map between the $\Ext$ groups. This implies that $[x]$ maps to $[z]$, and the result follows.
\end{proof}

Note that since
\[j_*:\pi^{\R}_{t-s-1,w}\left(\overline{H_{\R}}^{\wedge (s+r)}\middle/\overline{H_{\R}}^{\wedge (s+r+1)}\right)^{C_2}\to\pi^{\R}_{t-s-1,w}\left(\overline{H_{\R}}^{\wedge (s+r)}\middle/\overline{H_{\R}}^{\wedge (s+r+1)}\right)^h\]
factors through $\pi^{\R}_{t-s-1,w}\left(\overline{H_{\R}}^{\wedge (s+r)}\middle/\overline{H_{\R}}^{\wedge (s+r+1)}\right)$, the class $j_*z'$ is the positive cone part of $z'$.

If $j_*z'=0$, then $d_r^{C_2}\left([z]\right)$ lies in the negative cone part. The following theorem will show that there are differentials of the same length in the Borel and geunine $C_2$-equivariant Adams spectral sequences, such that the negative cone parts of their sources and targets are the same respectively.

\begin{thm}\label{diffnc}
Let $z\in\pi^{\R}_{t-s,w}\left(\overline{H_{\R}}^{\wedge (s-1)}\middle/\overline{H_{\R}}^{\wedge s}\right)^{C_2}$ be an element with $j_*z=0$. Suppose $z$ supports a nontrivial differential $d_r^{C_2}([z])=[z']$, where $r\ge2$ and $z'\in E_1^{C_2}$. Suppose further that $j_*z'=0$. Let $x\in\pi^{\R}_{t-s,w}\left(\overline{H_{\R}}^{\wedge s}\middle/\overline{H_{\R}}^{\wedge (s+1)}\right)^h$ be as in Theorem \ref{shortd}. Then there exists an element $x'\in\pi^{\R}_{t-s-1,w}\left(\overline{H_{\R}}^{\wedge (s+r)}\middle/\overline{H_{\R}}^{\wedge (s+r+1)}\right)^h$ surviving to $E_r^h$, satisfying that the image of $[x']\in E_2^h$ under the map
\[\Ext_{\A_{*,*}^{\R}}^{*,*,*}\left(\left(H_{\R}\right)^h_{*,*}\right)\to\Ext_{\A_{*,*}^{\R}}^{*,*,*}\left(\left(H_{\R}\right)^{\Psi}_{*,*}\right)\cong\Ext_{\A_{*,*}^{\R}}^{*-1,*-1,*}\left(\left(H_{\R}\wedge\left(\SR\right)^{\Psi}\right)_{*,*}\right)\]
coincides with $[z']$, and that
\[d_r^h\left([x]\right)=[x'].\]
\end{thm} 

\begin{proof}
Let $x$, $y$, $\beta$, and $\tilde{\beta}$ be the same elements as appeared in the proof of Theorem \ref{shortd}. Let $\beta'\in\pi_{t-s-1,w}^{\R}\left(\overline{H_{\R}}^{\wedge (s+r-1)}\middle/\overline{H_{\R}}^{\wedge (s+r+1)}\right)^{C_2}$ be the image of $\tilde{\beta}$, which maps to the element $z'$. Since $j_*z'=0$, we can apply Lemma \ref{lem6} to the following diagram:
\[
\xymatrix@C=.4cm{
\Sigma^{-1,0}\left(\frac{\overline{H_{\R}}^{\wedge (s+r-1)}}{\overline{H_{\R}}^{\wedge (s+r+1)}}\right)^h \ar[r] \ar[d] & \Sigma^{-1,0}\left(\frac{\overline{H_{\R}}^{\wedge (s+r-1)}}{\overline{H_{\R}}^{\wedge (s+r)}}\right)^h \ar[r] \ar[d] & \left(\frac{\overline{H_{\R}}^{\wedge (s+r)}}{\overline{H_{\R}}^{\wedge (s+r+1)}}\right)^h \ar[r] \ar[d]^i & \left(\frac{\overline{H_{\R}}^{\wedge (s+r-1)}}{\overline{H_{\R}}^{\wedge (s+r+1)}}\right)^h \ar[d] \\
\Sigma^{-1,0}\left(\frac{\overline{H_{\R}}^{\wedge (s+r-1)}}{\overline{H_{\R}}^{\wedge (s+r+1)}}\right)^{\Theta} \ar[r] \ar[d] & \Sigma^{-1,0}\left(\frac{\overline{H_{\R}}^{\wedge (s+r-1)}}{\overline{H_{\R}}^{\wedge (s+r)}}\right)^{\Theta} \ar[r]^(0.57){d_1^{\Theta}} \ar[d]^p & \left(\frac{\overline{H_{\R}}^{\wedge (s+r)}}{\overline{H_{\R}}^{\wedge (s+r+1)}}\right)^{\Theta} \ar[r] & \left(\frac{\overline{H_{\R}}^{\wedge (s+r-1)}}{\overline{H_{\R}}^{\wedge (s+r+1)}}\right)^{\Theta} \\
\left(\frac{\overline{H_{\R}}^{\wedge (s+r-1)}}{\overline{H_{\R}}^{\wedge (s+r+1)}}\right)^{C_2} \ar[r] \ar[d]^j & \left(\frac{\overline{H_{\R}}^{\wedge (s+r-1)}}{\overline{H_{\R}}^{\wedge (s+r)}}\right)^{C_2} \ar[d] \\
\left(\frac{\overline{H_{\R}}^{\wedge (s+r-1)}}{\overline{H_{\R}}^{\wedge (s+r+1)}}\right)^h \ar[r] & \left(\frac{\overline{H_{\R}}^{\wedge (s+r-1)}}{\overline{H_{\R}}^{\wedge (s+r)}}\right)^h. \\
}
\]
Let $y'$ be the chosen lift of $z'$ along $p_*$, and $x'$ be the chosen lift of $d_1^{\Theta}(y')$ along $i_*$. Then we have the following diagram:
\[
\xymatrix@C=.3cm{
d_1^{\Theta}(y') & & \pi^{\R}_{t-s-1,w}\left(\frac{\overline{H_{\R}}^{\wedge (s+r)}}{\overline{H_{\R}}^{\wedge (s+r+1)}}\right)^{\Theta} & & & \pi^{\R}_{t-s-1,w}\left(\frac{\overline{H_{\R}}^{\wedge (s+r-1)}}{\overline{H_{\R}}^{\wedge (s+r)}}\right)^{\Theta} \ar[lll]_{d_1^{\Theta}} \ar[d]^{p_*} & & y' \\
x' & & \pi^{\R}_{t-s-1,w}\left(\frac{\overline{H_{\R}}^{\wedge (s+r)}}{\overline{H_{\R}}^{\wedge (s+r+1)}}\right)^h \ar[u]^{i_*} \ar[d] \ar@{}[rrr]|*++=[o][F]{\txt{Lemma\\\phantom{i}\ref{lem6}\phantom{i}}} & & & \pi^{\R}_{t-s-1,w}\left(\frac{\overline{H_{\R}}^{\wedge (s+r-1)}}{\overline{H_{\R}}^{\wedge (s+r)}}\right)^{C_2} & & z' \\
j_*\beta' & & \pi^{\R}_{t-s-1,w}\left(\frac{\overline{H_{\R}}^{\wedge (s+r-1)}}{\overline{H_{\R}}^{\wedge (s+r+1)}}\right)^h & & & \pi^{\R}_{t-s-1,w}\left(\frac{\overline{H_{\R}}^{\wedge (s+r-1)}}{\overline{H_{\R}}^{\wedge (s+r+1)}}\right)^{C_2} \ar[lll]^{j_*} \ar[u] & & \beta' \\
j_*\tilde{\beta} & & \pi^{\R}_{t-s,w}\left(\frac{\overline{H_{\R}}^{\wedge (s-1)}}{\overline{H_{\R}}^{\wedge (s+r-1)}}\right)^h \ar[u]\ar[d] & & & \pi^{\R}_{t-s,w}\left(\frac{\overline{H_{\R}}^{\wedge (s-1)}}{\overline{H_{\R}}^{\wedge (s+r-1)}}\right)^{C_2} \ar[lll]_{j_*} \ar[u]\ar[d] & & \tilde{\beta} \\
j_*\beta & & \pi^{\R}_{t-s,w}\left(\frac{\overline{H_{\R}}^{\wedge (s-1)}}{\overline{H_{\R}}^{\wedge (s+1)}}\right)^h & & & \pi^{\R}_{t-s,w}\left(\frac{\overline{H_{\R}}^{\wedge (s-1)}}{\overline{H_{\R}}^{\wedge (s+1)}}\right)^{C_2} \ar[lll]_{j_*} \ar[d] & & \beta \\
x & & \pi^{\R}_{t-s,w}\left(\frac{\overline{H_{\R}}^{\wedge s}}{\overline{H_{\R}}^{\wedge (s+1)}}\right)^h \ar@{-->}`l_u/4pt[l]`/4pt[uuuu]_(0.4){d_r^h}[uuuu] \ar[u]\ar[d]_{i_*} \ar@{}[rrr]|*++=[o][F]{\txt{Lemma\\\phantom{i}\ref{lem6}\phantom{i}}} & & & \pi^{\R}_{t-s,w}\left(\frac{\overline{H_{\R}}^{\wedge (s-1)}}{\overline{H_{\R}}^{\wedge s}}\right)^{C_2} \ar@{-->}`r^u/4pt[r]`/4pt[uuuu]^(0.4){d_r^{C_2}}[uuuu] & & z \\
d_1^{\Theta}(y) & & \pi^{\R}_{t-s,w}\left(\frac{\overline{H_{\R}}^{\wedge s}}{\overline{H_{\R}}^{\wedge (s+1)}}\right)^{\Theta} & & & \pi^{\R}_{t-s+1,w}\left(\frac{\overline{H_{\R}}^{\wedge (s-1)}}{\overline{H_{\R}}^{\wedge s}}\right)^{\Theta} \ar[lll]^{d_1^{\Theta}}. \ar[u]_{p_*} & & y \\
}
\]
From the diagram, we can see that $d_r^h\left([x']\right)=[x'']$. By the similar analysis as in the proof of Theorem \ref{shortd}, we can see that $x''$ represents the class whose image is $[z]$ under the boundary map of $\Ext$ groups, which completes the proof.
\end{proof}

The following theorem characterizes the relative behavior in the Borel Adams spectral sequence when an element in the negative cone part of $E_2^{C_2}$ survives to $E_{\infty}^{C_2}$.

\begin{thm}
Let $z\in\pi^{\R}_{t-s,w}\left(\overline{H_{\R}}^{\wedge (s-1)}\middle/\overline{H_{\R}}^{\wedge s}\right)^{C_2}$ be an element with $j_*z=0$. Suppose $z$ survives to $E_{\infty}^{C_2}$ and detects $\hat{z}\in\pi^{\R}_{t-s-1,w}\left(\SR\right)^{C_2}$. Then there exists an element $x\in\pi^{\R}_{t-s,w}\left(\overline{H_{\R}}^{\wedge s}\middle/\overline{H_{\R}}^{\wedge (s+1)}\right)^h$ surviving to $E_{\infty}^h$ and detecting $j_*\hat{z}$, such that the image of $[x]\in E_2^h$ under the map
\[\Ext_{\A_{*,*}^{\R}}^{*,*,*}\left(\left(H_{\R}\right)^h_{*,*}\right)\to\Ext_{\A_{*,*}^{\R}}^{*,*,*}\left(\left(H_{\R}\right)^{\Psi}_{*,*}\right)\cong\Ext_{\A_{*,*}^{\R}}^{*-1,*-1,*}\left(\left(H_{\R}\wedge\left(\SR\right)^{\Psi}\right)_{*,*}\right)\]
coincides with $[z]$.
\end{thm}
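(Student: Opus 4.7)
My plan is to adapt the diagrammatic arguments in the proofs of Theorems \ref{shortd} and \ref{diffnc} to the permanent-cycle setting. First, I would construct $x$ exactly as in the proof of Theorem \ref{shortd}: choose any lift $y$ of $z$ along $p_*$, and let $x$ be any lift of $d_1^{\Theta}(y)$ along $i_*$. This construction never invoked the hypothesis that $z$ supports a differential, so it applies verbatim. The final diagram in that proof already shows that $[x]\in E_2^h$ maps to $[z]$ under the boundary map $\Ext_{\A_{*,*}^{\R}}^{*,*,*}((H_{\R})^h_{*,*})\to\Ext_{\A_{*,*}^{\R}}^{*-1,*-1,*}((H_{\R}\wedge(\SR)^{\Psi})_{*,*})$.

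To show that $[x]$ survives to $E_\infty^h$, I use that $z$ being a permanent cycle gives, for every $r\geq 1$, a lift $\tilde\beta_r\in\pi^{\R}_{t-s,w}(\overline{H_\R}^{\wedge(s-1)}/\overline{H_\R}^{\wedge(s+r)})^{C_2}$ of $z$. For $r=1$, the class $[z]$ is automatically a cycle for the differential $d$ of Theorem \ref{borext} because $d_2^{C_2}([z])=0$, so $[x]$ defines a well-posed class at $E_2^h$. For $r\geq 2$, feeding $\tilde\beta_r$ and $\tilde\beta_{r+1}$ into the diagrammatic arguments at the ends of the proofs of Theorems \ref{shortd} and \ref{diffnc} expresses the potential $d_r^h([x])$ as a sum of the $j_*$-image of a representative of the positive-cone component of $d_{r+1}^{C_2}([z])$ and (via Theorem \ref{diffnc}) the negative-cone component of $d_r^{C_2}([z])$. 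Since $z$ is a permanent cycle, every such component vanishes, so $[x]$ is a permanent cycle.

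Finally, to identify the element detected by $[x]$, I would choose a lift $\tilde z\in\pi^{\R}_{*,w}(\overline{H_\R}^{\wedge(s-1)})^{C_2}$ of $\hat z$ whose image in $\pi^{\R}_{t-s,w}(\overline{H_\R}^{\wedge(s-1)}/\overline{H_\R}^{\wedge s})^{C_2}$ is $z$. Then $j_*\tilde z$ projects to $j_*z=0$ modulo $\overline{H_\R}^{\wedge s}$, so it further lifts in the $h$-tower to a class in $\pi^{\R}_{*,w}(\overline{H_\R}^{\wedge s})^h$, whose image in $\pi^{\R}_{t-s,w}(\overline{H_\R}^{\wedge s}/\overline{H_\R}^{\wedge(s+1)})^h$ detects $j_*\hat z$. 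Applying Corollary \ref{brprop} in the setup of the proof of Theorem \ref{shortd} identifies this image with $x$ modulo the indeterminacy of the various lifts; since this indeterminacy lies in higher Adams filtration, $[x]$ detects $j_*\hat z$ at $E_\infty^h$. The main technical subtlety is precisely this final identification, which requires a compatible simultaneous choice of lifts through the $C_2$- and $h$-Adams towers and a careful application of Lemma \ref{lem6} to compare them.
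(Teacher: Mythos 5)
Your construction of $x$ (a lift of $d_1^{\Theta}(y)$ along $i_*$, with $y$ a lift of $z$ along $p_*$) and the identification of its image with $[z]$ are exactly the paper's, and your final paragraph is close to the paper's argument: your $\tilde z$ is the paper's $\hat\beta\in\pi^{\R}_{t-s,w}\bigl(\overline{H_{\R}}^{\wedge(s-1)}\bigr)^{C_2}$, and the paper compares $r_*\hat\alpha$ with $j_*\hat\beta$ directly to see they differ by an element coming from $\pi^{\R}_{t-s,w}\bigl(\overline{H_{\R}}^{\wedge(s+1)}\bigr)^h$, so that $x$ detects $j_*\hat z$.

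Where you go astray is the survival argument in your second paragraph. You propose a page-by-page analysis of $d_r^h([x])$ by ``feeding $\tilde\beta_r$ and $\tilde\beta_{r+1}$'' into the proofs of Theorems \ref{shortd} and \ref{diffnc}, expressing $d_r^h([x])$ as a combination of a positive-cone component of $d_{r+1}^{C_2}([z])$ and a negative-cone component of $d_r^{C_2}([z])$. This is a genuine gap: those two theorems are each stated under a dichotomy hypothesis on $j_*z'$ (nonzero for \ref{shortd}, zero for \ref{diffnc}) and neither produces a ``decomposition'' of $d_r^h([x])$ in the general case, so the claimed formula is not a corollary of what is proved there; making it rigorous would require reworking the diagram chase of Lemma \ref{lem6} with a general (possibly mixed) target $z'$. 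More to the point, the whole step is superfluous: the lift you construct in your third paragraph already establishes survival. In the paper, the existence of $\hat\beta$ (your $\tilde z$) lifting $z$ to the top of the $C_2$-tower immediately gives that $j_*\beta$ lifts to $j_*\hat\beta$, hence that $x$ maps to $0$ in $\pi^{\R}_{t-s-1,w}\bigl(\overline{H_{\R}}^{\wedge(s+1)}\bigr)^h$ and lifts to $\hat\alpha\in\pi^{\R}_{t-s,w}\bigl(\overline{H_{\R}}^{\wedge s}\bigr)^h$, which \emph{is} the statement that $x$ is a permanent cycle. In short: keep your construction and your final paragraph, but delete the proposed $d_r^h$ analysis and let the lift $\hat\alpha$ do double duty for both survival and detection.
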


\begin{proof}
Let $x$, $y$, and $\beta$ be the same elements as appeared in the proof of Theorem \ref{shortd}. Consider the following diagram:
\[
\xymatrix@C=.3cm{
j_*\hat{z} & \pi_{t-s,w}^{\R}\left(\SR\right)^h & & \pi_{t-s,w}^{\R}\left(\SR\right)^{C_2} \ar[ll]_{j_*} & \hat{z} \\
\txt{$j_*\hat{\beta}$\\$r_*\hat{\alpha}$} & \pi_{t-s,w}^{\R}\left(\overline{H_{\R}}^{\wedge (s-1)}\right)^h \ar[u]\ar[rd] & & \pi_{t-s,w}^{\R}\left(\overline{H_{\R}}^{\wedge (s-1)}\right)^{C_2} \ar[ll]_{j_*} \ar[u]\ar[d] & \hat{\beta} \\
\hat{\alpha} & \pi_{t-s,w}^{\R}\left(\overline{H_{\R}}^{\wedge s}\right)^h \ar[u]^{r_*}\ar[d] & \pi_{t-s,w}^{\R}\left(\frac{\overline{H_{\R}}^{\wedge (s-1)}}{\overline{H_{\R}}^{\wedge (s+1)}}\right)^h & \pi_{t-s,w}^{\R}\left(\frac{\overline{H_{\R}}^{\wedge (s-1)}}{\overline{H_{\R}}^{\wedge (s+1)}}\right)^{C_2} \ar[l]_{j_*} \ar[d] & \beta \\
x & \pi_{t-s,w}^{\R}\left(\frac{\overline{H_{\R}}^{\wedge s}}{\overline{H_{\R}}^{\wedge (s+1)}}\right)^h \ar[ru]\ar[d]_{i_*} \ar@{}[rr]|*++=[o][F]{\txt{Lemma\\\phantom{i}\ref{lem6}\phantom{i}}} & & \pi_{t-s,w}^{\R}\left(\frac{\overline{H_{\R}}^{\wedge (s-1)}}{\overline{H_{\R}}^{\wedge s}}\right)^{C_2} & z \\
d_1^{\Theta}(y) & \pi_{t-s,w}^{\R}\left(\frac{\overline{H_{\R}}^{\wedge s}}{\overline{H_{\R}}^{\wedge (s+1)}}\right)^{\Theta} & & \pi_{t-s+1,w}^{\R}\left(\frac{\overline{H_{\R}}^{\wedge (s-1)}}{\overline{H_{\R}}^{\wedge s}}\right)^{\Theta}. \ar[ll]^{d_1^{\Theta}} \ar[u]_{p_*} & y \\
}
\]
By our assumption, $\beta$ can be lifted to $\hat{\beta}\in\pi_{t-s,w}^{\R}\left(\overline{H_{\R}}^{\wedge (s-1)}\right)^{C_2}$. Then $j_*\beta$ is the image of $j_*\hat{\beta}$ and thus maps to $0$ in $\pi_{t-s-1,w}^{\R}\left(\overline{H_{\R}}^{\wedge (s+1)}\right)^h$. So does $x$ since $x$ maps to $j_*\beta$. Therefore, $x$ can be lifted to $\hat{\alpha}\in\pi_{t-s,w}^{\R}\left(\overline{H_{\R}}^{\wedge s}\right)^h$. Since $r_*\hat{\alpha}$ and $j_*\hat{\beta}$ both map to $j_*\beta$, their difference lies in the image of $\pi_{t-s,w}^{\R}\left(\overline{H_{\R}}^{\wedge (s+1)}\right)^h$. Therefore, $j_*\hat{z}$ is also detected by $x'$.
\end{proof}

\section{Relations to the $\rho$-Bockstein spectral sequence for genuine $C_2$ $\textup{Ext}$}\label{sec-names}
The following are two charts of the $\Ext$ groups in coweight $3$. The first one is $\Ext_{\A_{*,*}^{\R}}^{*,*,*}\left((H_{\R})_{*,*}^h\right)$, where the elements are named from the algebraic Atiyah-Hirzebruch spectral sequences
\begin{equation}\label{w>0}
\bigoplus_{k\ge-w,k\ne-1}\Ext_{\A_*}^{*,*}\left(H_*S^k\right)\Rightarrow \Ext_{\A_*}^{*,*}\left(H_*\Sigma\PP_{-\infty}^{-w-1}\right),
\end{equation}
\begin{equation}\label{w<=0}
\bigoplus_{k\ge-w}\Ext_{\A_*}^{*,*}\left(H_*S^k\right)\Rightarrow \Ext_{\A_*}^{*,*}\left(H_*\Sigma\PP_{-w}^{\infty}\right),
\end{equation}
and the classical Adams $E_2$-term
\begin{equation}\label{-1}
\Ext_{\A_*}^{*,*}\left(H_*S^{-1}\right)
\end{equation}
(c.f. Theorem \ref{posw} and Theorem \ref{negw}). The author produced the chart through a computer program \cite{prog} which computes these spectral sequences through the Curtis algorithm. The second one is the chart in \cite{gi} for $\Ext_{\A_{*,*}^{\R}}^{*,*,*}\left((H_{\R})_{*,*}^{C_2}\right)$, where the elements are named from the $\rho$-Bockstein spectral sequence. Complete charts of $\Ext_{\A_{*,*}^{\R}}^{*,*,*}\left((H_{\R})_{*,*}^h\right)$ produced using these computer calculations for stems $0$ through $30$ and coweights $-2$ through $13$ appear in \cite{charts}.

It is hard not to notice that the two names of elements are highly related, which will be discussed in this section.

\begin{center}
\includegraphics[width=\textwidth]{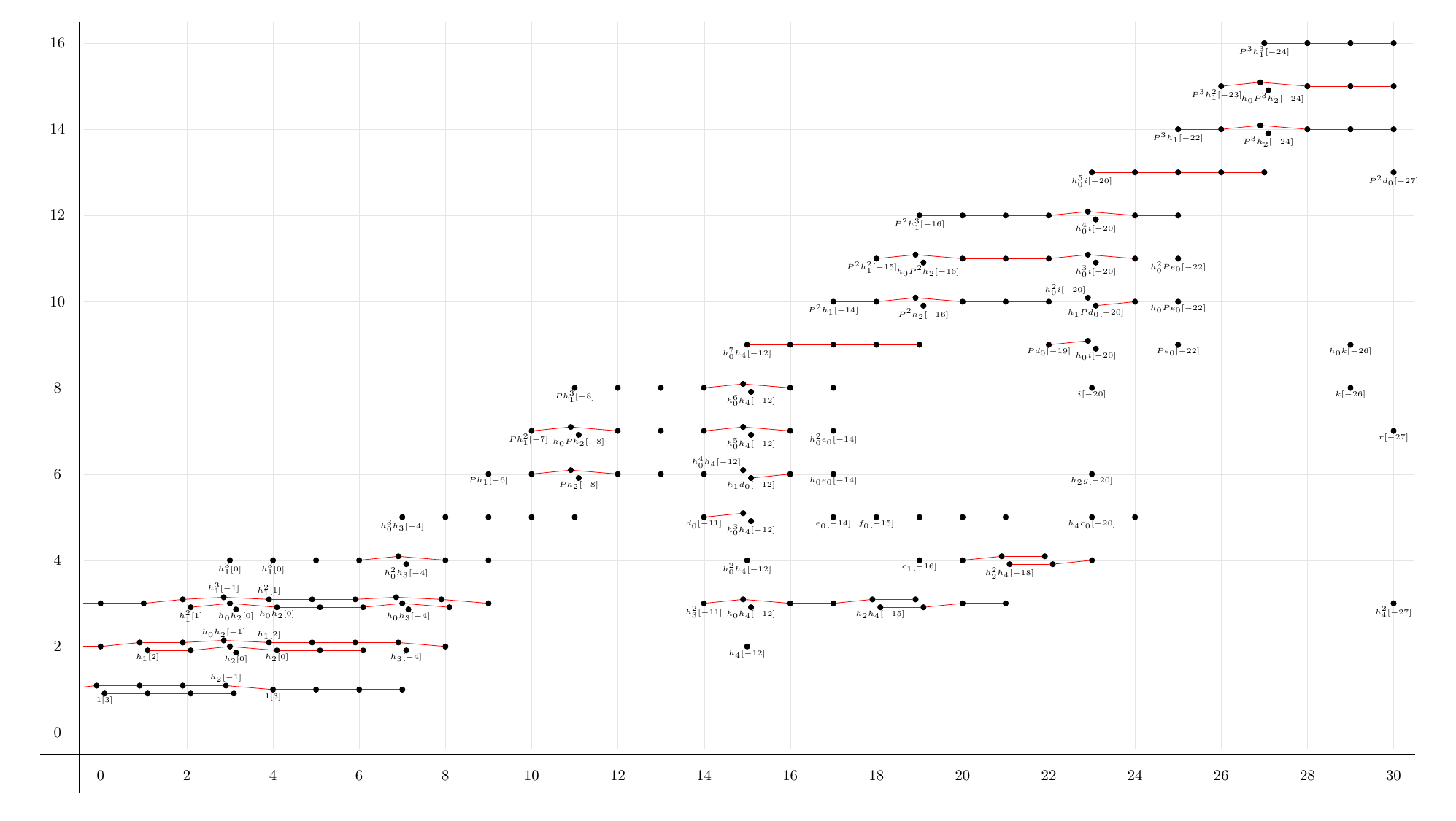}
\[\Ext_{\A_{*,*}^{\R}}^{*,*,*}\left((H_{\R})_{*,*}^h\right)\textup{ in coweight 3}\]
\includegraphics[width=\textwidth]{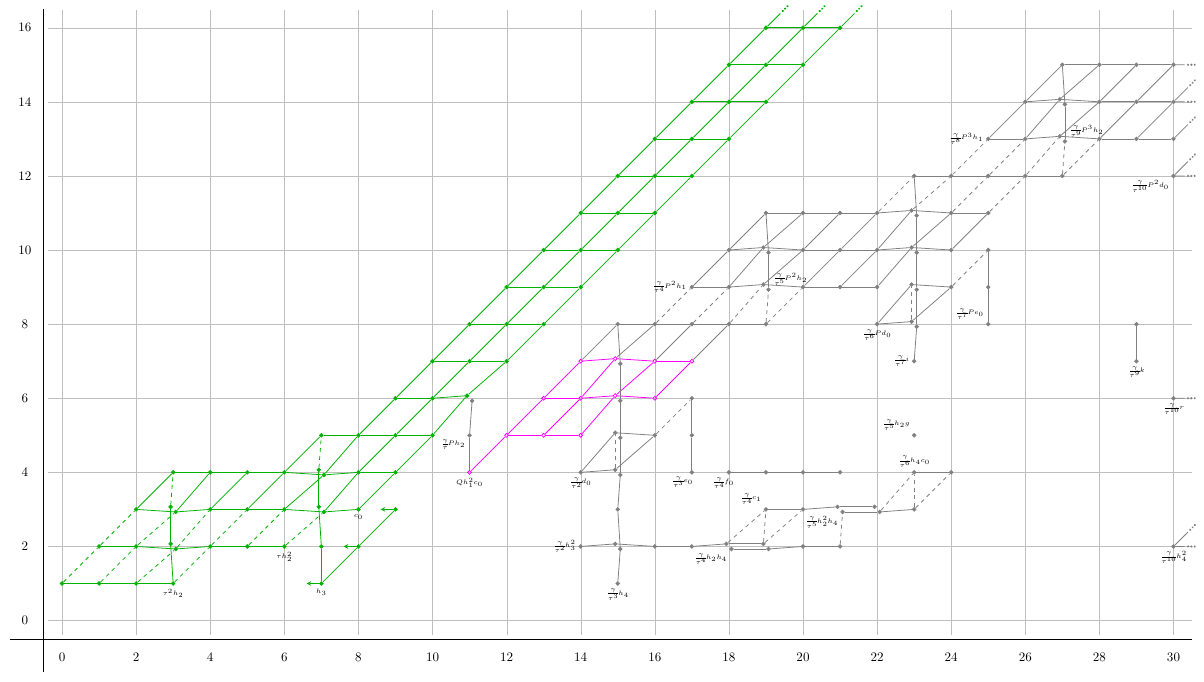}
\[\textup{Guillou-Isaksen's chart (\cite{gi}) of }\Ext_{\A_{*,*}^{\R}}^{*,*,*}\left((H_{\R})_{*,*}^{C_2}\right)\textup{ in coweight 3}\]
\end{center}

By \cite[Prop.~3.1]{GHIR}, the $\rho$-Bockstein spectral sequence that converges to $\Ext_{\A_{*,*}^{\R}}^{*,*,*}\left((H_{\R})_{*,*}^{C_2}\right)$ splits as the direct sum of two $\rho$-Bockstein spectral sequences, one converging to $\Ext_{\A_{*,*}^{\R}}^{*,*,*}\left((H_{\R})_{*,*}\right)$ with the $E_1$-term
\begin{equation}\label{rhopc}
\bigoplus_{j\ge 0}\Ext_{\C}\left\{\rho^j\right\},
\end{equation}
and the other converging to $\Ext_{\A_{*,*}^{\R}}^{*,*,*}\left(\left(H_{\R}\wedge\left(\SR\right)^{\Psi}\right)_{*,*}\right)$ with the $E_1$-term
\begin{equation}\label{rhoncs}
\left(\bigoplus_{j\ge 0}\frac{\F[\tau]}{\left(\tau^{\infty}\right)}\left\{\frac{\gamma}{\rho^j}\right\}\otimes_{\F[\tau]}\Ext_{\C}\right)\oplus\left(\bigoplus_{j\ge 0}\textup{Tor}^{\F[\tau]}\left(\frac{\F[\tau]}{\left(\tau^{\infty}\right)}\left\{\frac{\gamma}{\rho^j}\right\},\Ext_{\C}\right)\right).
\end{equation}

Recall from Section \ref{sec-background} that in the short exact sequence
\begin{equation}\label{ses}
0\to\left(H_{\R}\right)_{*,*}^{\Psi}\to\left(H_{\R}\right)_{*,*}^{\Theta}\to\Sigma^{1,0}\left(H_{\R}\wedge\left(\SR\right)^{\Psi}\right)_{*,*}\to0,
\end{equation}
$\gamma/\tau\in\Sigma^{1,0}\left(H_{\R}\wedge\left(\SR\right)^{\Psi}\right)_{*,*}$ is the image of $1/\left(\tau\rho\right)\in\left(H_{\R}\right)_{*,*}^{\Theta}$. Hence, we can apply a degree shifting to (\ref{rhoncs}) that takes $\gamma/\tau$ to $1/\left(\tau\rho\right)$, and it can be rewritten as the $\rho$-Bockstein spectral sequence converging to $\Ext_{\A_{*,*}^{\R}}^{*,*,*}\left(\Sigma^{1,0}\left(H_{\R}\wedge\left(\SR\right)^{\Psi}\right)_{*,*}\right)$ with the $E_1$-term
\begin{equation}\label{rhonc}
\left(\bigoplus_{j<0}\frac{\F[\tau]}{\left(\tau^{\infty}\right)}\left\{\rho^j\right\}\otimes_{\F[\tau]}\Ext_{\C}\right)\oplus\left(\bigoplus_{j<0}\textup{Tor}^{\F[\tau]}\left(\frac{\F[\tau]}{\left(\tau^{\infty}\right)}\left\{\rho^j\right\},\Ext_{\C}\right)\right).
\end{equation}
There are also the $\rho$-Bockstein spectral sequences converging to $\Ext_{\A_{*,*}^{\R}}^{*,*,*}\left((H_{\R})_{*,*}^\Psi\right)$ and $\Ext_{\A_{*,*}^{\R}}^{*,*,*}\left((H_{\R})_{*,*}^\Theta\right)$, with the $E_1$-terms
\begin{equation}\label{rhopsi}
\left(\bigoplus_{j\ge 0}\frac{\F[\tau]}{\left(\tau^{\infty}\right)}\left\{\rho^j\right\}\otimes_{\F[\tau]}\Ext_{\C}\right)\oplus\left(\bigoplus_{j\ge 0}\textup{Tor}^{\F[\tau]}\left(\frac{\F[\tau]}{\left(\tau^{\infty}\right)}\left\{\rho^j\right\},\Ext_{\C}\right)\right)
\end{equation}
and
\begin{equation}\label{rhotheta}
\left(\bigoplus_{j\in\Z}\frac{\F[\tau]}{\left(\tau^{\infty}\right)}\left\{\rho^j\right\}\otimes_{\F[\tau]}\Ext_{\C}\right)\oplus\left(\bigoplus_{j\in\Z}\textup{Tor}^{\F[\tau]}\left(\frac{\F[\tau]}{\left(\tau^{\infty}\right)}\left\{\rho^j\right\},\Ext_{\C}\right)\right),
\end{equation}
respectively. Note that there are maps between these $\rho$-Bockstein spectral sequences induced by the maps in the short exact sequence (\ref{ses}). Since $\F[\tau^{\pm}]$ is flat over $\F[\tau]$, there is the $\rho$-Bockstein spectral sequence converging to $\Ext_{\A_{*,*}^{\R}}^{*,*,*}\left((H_{\R})^h_{*,*}\right)$ with the $E_1$-term
\begin{equation}\label{rhoh}
\bigoplus_{j\ge 0}\F[\tau^{\pm}]\left\{\rho^j\right\}\otimes_{\F[\tau]}\Ext_{\C}.
\end{equation}

To state our theorems in names comparison, we need to define the names of elements in the $\rho$-Bockstein spectral sequences and the Atiyah-Hirzebruch spectral sequences. For the $\rho$-Bockstein spectral sequences (\ref{rhonc}), (\ref{rhopsi}), and (\ref{rhotheta}), we will only discuss the names of elements in their first summands in the rest of this article.

\begin{defn}\label{bname}
For a homogeneous element $x$ in (\ref{rhopc}) or (\ref{rhoh}), or the first summands of (\ref{rhonc}), (\ref{rhopsi}), or (\ref{rhotheta}), its $\rho$-Bockstein name has the form $\tau^i\rho^j\otimes\alpha$, where $\alpha$ is a $\tau$-periodic element in $\Ext_{\C}$ which is not $\tau$-divisible. The classical Adams part of its $\rho$-Bockstein name is the image of $\alpha$ under the map
\[\Ext_{\C}\to\Ext_{\C}[\tau^{-1}]\xrightarrow{\cong}\Ext_{\A_*}^{*,*}\left(\F\right)[\tau^\pm]\to\Ext_{\A_*}^{*,*}\left(\F\right),\]
where the second map is an isomorphism by \cite[Prop.~3.5]{dimot}, and the third map is the augmentation map.
\end{defn}

\begin{defn}\label{ahname}
Let $J$ be a nonempty subset of $\Z$. Let $M_*$ be an $\A_*$-comodule whose underlying graded $\F$-vector space is
\[M_*=\begin{cases}\F, & *\in J;\\ 0, & *\not\in J.\end{cases}\]
Consider the algebraic Atiyah-Hirzebruch spectral sequence
\[E_1=\bigoplus_{k\in J} \Ext_{\A_*}^{*,*}\left(H_*S^k\right)\Rightarrow \Ext_{A_*}^{*,*}\left(M_*\right).\]
For a homogeneous element $x$ in its $E_1$-page, its Atiyah-Hirzebruch name has the form $\alpha[k]$, where $\alpha\in\Ext_{\A_*}\left(\F\right)$ and $k\in J$. The classical Adams part of its Atiyah-Hirzebruch name is $\alpha$. 

For an element in (\ref{-1}), its Atiyah-Hirzebruch name has the form $\alpha[-1]$, where $\alpha\in\Ext_{\A_*}\left(\F\right)$. The classical Adams part of its Atiyah-Hirzebruch name is $\alpha$.

For an element $x\in\Ext_{\A_{*,*}^{\R}}^{*,*,*}\left((H_{\R})^h_{*,*}\right)$, let $y$ be the element in the $E_2$-terms of (\ref{ass>0}) or (\ref{ass<=0}) corresponding to $x$ under the isomorphisms in Theorem \ref{rhobss}, Theorem \ref{posw}, and Theorem \ref{negw}. The Atiyah-Hirzebruch name of $x$ is the Atiyah-Hirzebruch name of the element detecting $y$ in the algebraic Atiyah-Hirzebruch spectral sequences.
\end{defn}

In order to state the case (\ref{3}) of Theorem \ref{pcnames}, we still need a definition for the Mahowald invariant on the level of $\Ext$ groups.

\begin{defn}\label{root}
Let $\alpha$ be an element in $\Ext_{\A_*}^{*,*}\left(\F\right)$. Let $k$ be the minimal integer such that the composition map
\[\Ext_{\A_*}^{*,*}\left(\F\right)\xrightarrow{\cong}\Ext_{\A_*}^{*,*+1}\left(\varprojlim_kH_*\PP_{-k}^\infty\right)\to\Ext_{\A_*}^{*,*+1}\left(H_*\PP_{-k}^\infty\right)\]
maps $\alpha$ to a nontrivial element $\beta$, where the first map is an isomorphism by Lin's theorem \cite{ldma}. The Mahowald invariant of $\alpha$ is the coset of lifts of $\beta$ along the map
\[\Ext_{\A_*}^{*,*+k+1}\left(\F\right)\cong\Ext_{\A_*}^{*,*+1}\left(H_*S^{-k}\right)\to\Ext_{\A_*}^{*,*+1}\left(H_*\PP_{-k}^\infty\right).\]
\end{defn}

To relate the $\rho$-Bockstein and the Atiyah-Hirzebruch name, we will first refer to Theorem \ref{rhobss} which gives an isomorphism between the algebraic $a$-Bockstein spectral sequence (\ref{abss}) and the algebraic Atiyah-Hirzebruch spectral sequence (\ref{aahss}). Note that under the identification of $\left(Sp^{C_2}\right)^{\wedge}_2$ with a localization of $\left(\Rcell\right)^\wedge_2$ (c.f. Section \ref{sec-background}), the algebraic $a$-Bockstein spectral sequence corresponds to the $\rho$-Bockstein spectral sequence (\ref{rhoh}).

\begin{lem}\label{bah}
Let $x$ be an element in the $E_1$-term of the spectral sequence (\ref{aahss}) corresponding to $y$ in (\ref{rhoh}) under the isomorphism in Theorem \ref{rhobss}. Then the Atiyah-Hirzeburch name of $x$ and the $\rho$-Bockstein name of $y$ have the same classical Adams part.
\end{lem}

\begin{proof}
Note that the isomorphism of the $E_1$-terms of (\ref{aahss}) and (\ref{rhoh})
\begin{equation}\label{rhoah}
\bigoplus_{j\ge 0}\F[\tau^{\pm}]\left\{\rho^j\right\}\otimes_{\F[\tau]}\Ext_{\C}\cong\bigoplus_w\bigoplus_{k=-\infty}^{-w-1}\Ext_{\A_*}\left(H_*S^k\right)
\end{equation}
can be written as the composition of isomorphisms:
\begin{equation}\label{rhoahiso}
\begin{split}
\bigoplus_{j\ge 0}\F[\tau^{\pm}]\left\{\rho^j\right\}\otimes_{\F[\tau]}\Ext_{\C} & \cong \bigoplus_{j\ge 0}\Ext_{\C}[\tau^{-1}]\left\{\rho^j\right\} \\
& \cong \bigoplus_{j\ge 0}\Ext_{\A_*}\left(\F\right)[\tau^\pm]\left\{\rho^j\right\}\\
& \cong \bigoplus_w\bigoplus_{k=-\infty}^{-w-1}\Ext_{\A_*}\left(H_*S^k\right).
\end{split}
\end{equation}
Then the result follows from Definition \ref{bname} and Definition \ref{ahname}.
\end{proof}

This lemma connects the $\rho$-Bockstein names of elements in (\ref{rhoh}) with the Atiyah-Hirzebruch names of elements in (\ref{aahss}). We still need to compare the Atiyah-Hirzebruch names of elements in (\ref{aahss}), (\ref{w>0}), and (\ref{w<=0}), and the $\rho$-Bockstein names of elements in (\ref{rhoh}), (\ref{rhopc}), and (\ref{rhonc}).
\begin{equation}\label{key}
\begin{aligned}
\xymatrix@C=1in{
*+[F]{\txt{$\rho$-Bockstein\\names in $E_2^{C_2}$}} \ar@{<-->}[r] \ar@{<->}[d]_{(\dagger)} & *+[F]{\txt{Atiyah-Hirzebruch names in\\(\ref{w>0}), (\ref{w<=0}), and (\ref{-1})}} \ar@{<->}[d]^{(\dagger\dagger)} \\
*+[F]{\txt{$\rho$-Bockstein\\names in $E_2^h$}} \ar@{<->}[r]_{\txt{Lemma \ref{bah}}}  & *+[F]{\txt{Atiyah-Hirzebruch\\names in (\ref{aahss})}}
}
\end{aligned}
\end{equation}

We first consider elements in the positive cone part of $E_2^{C_2}$, in which case ($\dagger$) is induced by the map
\begin{equation}\label{pch}
\Ext_{\A_{*,*}^{\R}}^{*,*,*}\left((H_{\R})_{*,*}\right)\to\Ext_{\A_{*,*}^{\R}}^{*,*,*}\left((H_{\R})^h_{*,*}\right).
\end{equation}
Let $x$ be an element in (\ref{rhopc}) that survives to
\[[x]\in\Ext_{\A_{*,*}^{\R}}^{*,*,*}\left(\left(H_{\R}\right)_{*,*}\right)\subset E_2^{C_2}.\]
Take an element $y$ in (\ref{rhoh}) such that it has the same $\rho$-Bockstein name with $x$. Suppose that $y$ survives to $[y]\in\Ext_{\A_{*,*}^{\R}}^{s,t,w}\left((H_{\R})^h_{*,*}\right)$. Then $[y]$ is the image of $[x]$ under the map (\ref{pch}). The following theorem characterizes the behavior of ($\dagger\dagger$).

\begin{thm}\label{pcnames}
Let $x$ and $y$ be as above. Then
\begin{enumerate}
\item\label{1} If $w>0$, there is a differential in the algebraic Atiyah-Hirzebruch spectral sequence for $\Ext_{\A_*}(\varprojlim H_*\Sigma\PP_{-\infty}^{-k-1})$ whose source and target have the same classical Adams parts as the Atiyah-Hirzebruch name of $[y]$ and the $\rho$-Bockstein name of $x$ respectively.
\item If $w\le0$ and $[x]$ is a $\rho$-torsion, there is a differential in the algebraic Atiyah-Hirzebruch spectral sequence for $\Ext_{\A_*}^{*,*}(\varprojlim H_*\PP_{-k}^{\infty})$ whose source and target have the same classical Adams parts as the Atiyah-Hirzebruch name of $[y]$ and the $\rho$-Bockstein name of $x$ respectively.
\item\label{3} If $w\le0$ and $[x]$ is $\rho$-periodic, the classical Adams part of the $\rho$-Bockstein name of $x$ is a Mahowald invariant of the classical Adams part of the Atiyah-Hirzebruch name of $[y]$.
\end{enumerate}
\end{thm}

\begin{proof}
Let $y'$ be the image of $y$ under the isomorphism (\ref{rhoah}). Then the $\rho$-Bockstein name of $x$ is the same as the $\rho$-Bockstein name of $y$, which has the same classical Adams parts as the Atiyah-Hirzebruch name of $y'$ by Lemma \ref{bah}.
\begin{enumerate}
\item When $w>0$, recall from the proof of Theorem \ref{posw} that the connecting homomorphism
\begin{equation}\label{extlim}
\Ext_{\A_*}^{*,*}\left(H_*\Sigma\PP_{-\infty}^{-w-1}\right)\to \Ext_{\A_*}^{*+1,*}\left(\varprojlim_kH_*\PP_{-k}^{-w-1}\right)
\end{equation}
induced by the short exact sequence
\[0\to\varprojlim\limits_kH_*\PP_{-k}^{-w-1}\to\varprojlim\limits_kH_*\Sigma\PP_{-\infty}^{-k-1}\to H_*\Sigma\PP_{-\infty}^{-w-1}\to0\]
is an isomorphism. Let $[z]$ be the preimage of $[y']$ along the map (\ref{extlim}), which is detected by $z$ in the $E_1$-term of (\ref{w>0}). By Definition \ref{ahname}, the Atiyah-Hirzebruch name of $[y]$ is the Atiyah-Hirzebruch name of $z$. Now we need to compare the Atiyah-Hirzebruch names of $y'$ and $z$. Unpacking the connecting homomorphism to the chain level, we can see that there is an element representing $z$ in the cobar complex $C_{\A_*}(\varprojlim H_*\Sigma\PP_{-\infty}^{-k-1})$ whose chain differential has the leading term representing $y'$. Then the result follows since the differentials in the algebraic Atiyah-Hirzebruch spectral sequence come from the chain differentials.
\item When $w\ge0$, recall from the proof of Theorem \ref{negw} that there is a short exact sequence
\begin{equation}\label{extses}
\begin{split}
0\to\Ext_{\A_*}^{*,*}\left(H_*\PP_{-w}^{\infty}\right)&\to\Ext_{\A_*}^{*+1,*}\left(\varprojlim\limits_kH_*\PP_{-k}^{-w-1}\right)\\
&\to\Ext_{\A_*}^{*+1,*}\left(\varprojlim\limits_kH_*\PP_{-k}^{\infty}\right)\to0.
\end{split}
\end{equation}
By Theorem \ref{rhobss}, the assumption that $[x]$ is a $\rho$-torsion implies that $[y']$ maps to $0$ in $\Ext_{\A_*}^{*,*}(\varprojlim H_*\PP_{-k}^{\infty})$, and thus can be lifted to $[z]\in\Ext_{\A_*}^{*,*}(H_*\PP_{-w}^{\infty})$ detected by $z$ in the $E_1$-term of (\ref{w<=0}). Then the Atiyah-Hirzebruch name of $[y]$ is the Atiyah-Hirzebruch name of $z$. Note that the injective map in (\ref{extses}) is the connecting homomorphism induced by the short exact sequence
\[0\to\varprojlim\limits_kH_*\PP_{-k}^{-w-1}\to\varprojlim\limits_kH_*\PP_{-k}^{\infty}\to H_*\PP_{-w}^{\infty}\to0.\]
Then the result follows by the same method as in the proof of (\ref{1}).
\item In this case, the Atiyah-Hirzebruch name of $[y]$ is the image of $[y']$ under the composition map
\[\Ext_{\A_*}^{*,*}\left(\varprojlim\limits_kH_*\PP_{-k}^{-w-1}\right)\to\Ext_{\A_*}^{*,*}\left(\varprojlim\limits_kH_*\PP_{-k}^{\infty}\right)\xrightarrow{\cong}\Ext_{\A_*}^{*,*+1}\left(\F\right),\]
where the first map is the surjective map in (\ref{extses}), and the second map is contructed from Lin's theorem \cite{ldma}. Then the result follows from Definition \ref{root}. \qedhere
\end{enumerate}
\end{proof}

\begin{rmk}
Theorem \ref{pcnames} shows that for the elements in $\Ext_{\A_{*,*}^{\R}}^{*,*,*}\left((H_{\R})^h_{*,*}\right)$ that differ by a $\rho$-multiplication, their Atiyah-Hirzebruch names can still be different if the weight of one element is positive and the weight of the other is non-positive. For example, there is a differential $d(h_0h_2[0])=c_0[-6]$ in the algebraic Atiyah-Hirzebruch spectral sequence for $\Ext_{\A_*}(\varprojlim H_*\Sigma\PP_{-\infty}^{-k-1})$\footnote{This differential can be read off from the output of the author's computer program \cite{prog}, so can other differentials in the algebraic Atiyah-Hirzebruch spectral sequence appeared in the remarks in this section.}, so that $\rho^4c_0\in\Ext_{\A_{*,*}^{\R}}^{3,7,1}\left((H_{\R})_{*,*}\right)$ maps to $h_0h_2[0]\in\Ext_{\A_{*,*}^{\R}}^{3,7,1}\left((H_{\R})^h_{*,*}\right)$. There is a differential $d(h_1^2[1])=c_0[-6]$ in the algebraic Atiyah-Hirzebruch spectral sequence for $\Ext_{\A_*}^{*,*}(\varprojlim H_*\PP_{-k}^{\infty})$, so that $\rho^5c_0\in\Ext_{\A_{*,*}^{\R}}^{3,6,0}\left((H_{\R})_{*,*}\right)$ maps to $h_1^2[1]\in\Ext_{\A_{*,*}^{\R}}^{3,6,0}\left((H_{\R})^h_{*,*}\right)$. The Atiyah-Hirzebruch names of $h_0h_2[0]$ and $h_1^2[1]$ have different classical Adams parts, but they only differ by a $\rho$-multiplication.
\end{rmk}

\begin{rmk}
If $y$ does not survive to the $E_{\infty}$-page of (\ref{rhoh}), there will be two cases. If the image of $[x]$ is trivial in $\Ext_{\A_{*,*}^{\R}}^{*,*,*}\left((H_{\R})^h_{*,*}\right)$, then $[x]$ is killed by the shortened differential $\delta$ (c.f. Theorem \ref{borext}), which detects a nontrivial $d_2$ in the genuine $C_2$-equivariant Adams spectral sequence. If its image is nontrivial, then there is a hidden $\rho$-extension in the genuine $C_2$-equivariant Adams spectral sequence detected by the $E_2$-term of the Borel $C_2$-equivariant Adams spectral sequence. For example, $h_1^2c_0$ and $h_1^6c_0$ do not survive to the $E_1$-page of the $\rho$-Bockstein spectral sequence for $\Ext_{\A_{*,*}^{\R}}^{*,*,*}\left((H_{\R})^h_{*,*}\right)$. The image of $h_1^6c_0\in\Ext_{\A_{*,*}^{\R}}^{9,23,11}\left((H_{\R})_{*,*}\right)$ is trivial in $\Ext_{\A_{*,*}^{\R}}^{9,23,11}\left((H_{\R})^h_{*,*}\right)$, which implies that it is killed by $\delta$, whose source turns out to be $\rho^{-2}QPh_1^4\in\Ext_{\A_{*,*}^{\R}}^{7,22,11}\left(\left(H_{\R}\wedge\left(\SR\right)^{\Psi}\right)_{*,*}\right)$ by the degree reason. On the other hand, the image of $h_1^2c_0\in\Ext_{\A_{*,*}^{\R}}^{5,15,7}\left((H_{\R})_{*,*}\right)$ is nontrivial in $\Ext_{\A_{*,*}^{\R}}^{5,15,7}\left((H_{\R})^h_{*,*}\right)$. This implies that the homotopy class detected by $h_1^2c_0$ has to be $\rho$-divisible, which could also be read from the charts of $\Ext_{\A_{*,*}^{\R}}^{*,*,*}\left((H_{\R})^h_{*,*}\right)$.
\end{rmk}

Then we consider elements in the negative cone part of $E_2^{C_2}$, in which case ($\dagger$) is induced by the zig-zag of maps as shown in the proof of Theorem \ref{borext}:
\[\Ext_{\A_{*,*}^{\R}}^{*,*,*}\left(\Sigma^{1,0}\left(H_{\R}\wedge\left(\SR\right)^{\Psi}\right)_{*,*}\right)\xrightarrow{\cong}\Ext_{\A_{*,*}^{\R}}^{*+1,*,*}\left((H_{\R})^\Psi_{*,*}\right)\leftarrow\Ext_{\A_{*,*}^{\R}}^{*+1,*,*}\left((H_{\R})^h_{*,*}\right).\]
Let $x$ be an element in the first summand of (\ref{rhonc}) that survives to 
\[[x]\in\Ext_{\A_{*,*}^{\R}}^{*,*,*}\left(\Sigma^{1,0}\left(H_{\R}\wedge\left(\SR\right)^{\Psi}\right)_{*,*}\right)\subset E_2^{C_2}.\]
To have $[x]$ survive in $\Ext_{\A_{*,*}^{\R}}^{*,*,*}\left((H_{\R})^h_{*,*}\right)$, we need to assume that $\delta([x])=0$, where $\delta$ is the map constructed in Theorem \ref{borext}. Take an element in the first summand of (\ref{rhopsi}) such that it has the same $\rho$-Bockstein name with $x$, which, by abuse of notation, will also be denoted by $x$. Since $\Ext_{\A_{*,*}^{\R}}^{*,*,*}\left((H_{\R})^{\Theta}_{*,*}\right)=0$, no elements survive in the $\rho$-Bockstein spectral sequence (\ref{rhotheta}). Because $x$ survives in (\ref{rhonc}), it cannot be a target of differentials in (\ref{rhotheta}), and thus it supports a differential $d(x)=y$. Note that $x$ is a permanent cycle in (\ref{rhonc}), hence $y$ belongs to (\ref{rhopsi}).

Now we claim that $y$ can be chosen so that it belongs to the first summand of (\ref{rhopsi}). Note that every element in $\F[\tau]/\left(\tau^\infty\right)$ is infinitely $\tau$-divisible. In particular, $x$ is $\tau^{2^k}$-divisible for any positive integer $k$. Let $x'$ be an element in the first summand of (\ref{rhonc}) such that $x=\tau^{2^k}x'$. In the proof of \cite[Lem.~4.3]{di}, Dugger-Isaksen showed that $\tau^{2^k}$ is a cycle in the $E_{2^k-1}$-page of the $\rho$-Bockstein spectral sequence, so $d(x)=\tau^{2^k}d(x')$ for $k\gg0$. This implies that $y$ can be chosen to be $\tau^{2^k}$-divisible for arbitrary $k\gg0$, and the claim follows.

Since $y=d(x)$ is a permanent cycle in the spectral sequence (\ref{rhotheta}), it is also a permanent cycle in the spectral sequence (\ref{rhopsi}). Furthermore, it is not a target of a differential in (\ref{rhopsi}), otherwise it would be the target of a differential in (\ref{rhotheta}) which happens before $d(x)=y$. Therefore, it survives to $[y]\in\Ext_{\A_{*,*}^{\R}}^{*,*,*}\left((H_{\R})^\Psi_{*,*}\right)$, which is the image of $[x]$ under the connecting homomorphism
\[\Ext_{\A_{*,*}^{\R}}^{*,*,*}\left(\Sigma^{1,0}\left(H_{\R}\wedge\left(\SR\right)^{\Psi}\right)_{*,*}\right)\to\Ext_{\A_{*,*}^{\R}}^{*+1,*,*}\left((H_{\R})^\Psi_{*,*}\right).\]
In the proof of Theorem \ref{borext}, we showed that $\delta$ coincides with the map $\delta'$ constructed in (\ref{delta}). Then our assumption $\delta([x])=0$ implies that $[y]$ maps to $0$ under the map
\[\Ext_{\A_{*,*}^{\R}}^{*,*,*}\left((H_{\R})^\Psi_{*,*}\right)\to\Ext_{\A_{*,*}^{\R}}^{*+1,*,*}\left((H_{\R})_{*,*}\right).\]

Let $\overline{y}$ be an element in (\ref{rhoh}) which has the same $\rho$-Bockstein name as $y$. Then $\overline{y}$ survives to $[\overline{y}]\in\Ext_{\A_{*,*}^{\R}}^{*+1,*,*}\left((H_{\R})^h_{*,*}\right)$ in the spectral sequence (\ref{rhoh}). Since $[\overline{y}]$ maps to $[y]$ under the map
\[\Ext_{\A_{*,*}^{\R}}^{*,*,*}\left((H_{\R})^h_{*,*}\right)\to\Ext_{\A_{*,*}^{\R}}^{*,*,*}\left((H_{\R})^\Psi_{*,*}\right),\]
we can see that $[\overline{y}]$ corresponds to $[x]$ under the identification in Theorem \ref{borext}. This gives a way to characterize ($\dagger$) in (\ref{key}).

Take the minimal $i>0$ such that $\rho^i x$ lies in the first summand of (\ref{rhopsi}). Then there is a differential $d(\rho^ix)=\rho^iy$ in the $\rho$-Bockstein spectral sequence (\ref{rhopsi}).

\begin{thm}\label{ncnames}
Let $x$, $y$, $\overline{y}$, and $i$ be as above. Let $\overline{\rho^ix}$ be an element in (\ref{rhoh}) with the same $\rho$-Bockstein name as $\rho^ix$. Suppose there is a differential $d(\overline{\rho^ix})=\rho^i\overline{y}$ in the $\rho$-Bockstein spectral sequence (\ref{rhoh}). Then the $\rho$-Bockstein name of $x$ and the Atiyah-Hirzebruch name of $[\overline{y}]$ have the same classical Adams part.
\end{thm}

\begin{proof}
Since $i$ is minimal, the $\rho$-Bockstein name of $\rho^ix\in\F[\tau]/(\tau^\infty)\otimes_{\F[\tau]}\Ext_{\C}$ is $\tau^m\otimes\alpha$, where $m<0$ and $\alpha$ is not $\tau$-divisible. By \cite[Prop.~2.2(1)]{bgi}, the weight of $\alpha$ is non-negative. Since the weight of $\tau^m$ is positive, the weight of $\rho^ix$ is also positive, and so are the weights of $\overline{\rho^ix}$ and $\rho^i\overline{y}$. Let $w$ be the weight of $\overline{y}$. Since the weight of $\rho^i$ is $-i$, the weight of $\rho^i\overline{y}$ is $w-i$, and we have $w>i\ge1$.

Let $\tilde{x}$ and $\tilde{y}$ be the images of $\overline{\rho^ix}$ and $\rho^i\overline{y}$ respectively under the isomorphism (\ref{rhoah}). By Theorem \ref{rhobss}, there is a differential $d(\tilde{x})=\tilde{y}$ in the algebraic Atiyah-Hirzebruch spectral sequence
\[\bigoplus_{l=-\infty}^{-(w-i)-1}\Ext_{\A_*}^{*,*}\left(H_*S^l\right)\Rightarrow \Ext_{\A_*}^{*,*}\left(\varprojlim_kH_*\PP_{-k}^{-(w-i)-1}\right).\]
Note that the inclusion of $\A_*$-comodules
\[\varprojlim_kH_*\PP_{-k}^{-(w-i)-1}\to\varprojlim_kH_*\Sigma\PP_{-\infty}^{-k-1}\]
is an isomorphism when $*\le-(w-i)-1$. Therefore, the differential $d(\tilde{x})=\tilde{y}$ also occurs in the algebraic Atiyah-Hirzebruch spectral sequence
\begin{equation}\label{ah-1}
\bigoplus_{l\ne-1}\Ext_{\A_*}^{*,*}\left(H_*S^l\right)\Rightarrow \Ext_{\A_*}^{*,*}\left(\varprojlim_kH_*\Sigma\PP_{-\infty}^{-k-1}\right).
\end{equation}

By (\ref{rhoahiso}), the image of $\overline{y}$ under the isomorphism (\ref{rhoah}) is also $\tilde{y}$. Since $\overline{y}$ is assumed to survive to $[\overline{y}]\in\Ext_{\A_{*,*}^{\R}}^{*,*,*}\left((H_{\R})^h_{*,*}\right)$, by Theorem \ref{rhobss}, $\tilde{y}$ survives to $[\tilde{y}]\in\Ext_{\A_*}^{*,*}\left(\varprojlim H_*\PP_{-k}^{-w-1}\right)$. Using the same method as in the proof of Theorem \ref{pcnames} (\ref{1}), we can see that $\tilde{x}$ survives to $[\tilde{x}]\in\Ext_{\A_*}^{*-1,*}\left(H_*\Sigma\PP_{-\infty}^{-w-1}\right)$, which maps to $[\tilde{y}]$ under the isomorphism (\ref{extlim}).

By Definition \ref{ahname}, the Atiyah-Hirzebruch name of $[\overline{y}]$ is the Atiyah-Hirzebruch name of $\tilde{x}$. By Lemma \ref{bah}, the Atiyah-Hirzebruch name of $\tilde{x}$ and the $\rho$-Bockstein name of $\overline{\rho^ix}$ have the same classical Adams part. It is direct that the $\rho$-Bockstein name of $\overline{\rho^ix}$ and the $\rho$-Bockstein name of $x$ have the same classical Adams part. Therefore, the result follows.
\end{proof}

\begin{rmk}
The assumption that there is a differential $d(\overline{\rho^ix})=\rho^i\overline{y}$ might fail. This is because there might be $d(\overline{\rho^ix})=\rho^i\overline{y}+z$, where $z$ is an element in (\ref{rhopc}) with lower $\rho$-power, which leads to the situation that $\overline{\rho^i x}$ supports a shorter differential. However, this will not happen if every $\rho$-Bockstein differential in (\ref{rhoh}) does not increase the power of $\tau$ in the $\rho$-Bockstein names. Since $\tau^{2^k}$ is a cycle in the $E_{2^k-1}$-page (see the proof of \cite[Lem.~4.3]{di}), this is equivalent to say that every differential in (\ref{rhopc}) does not increase the $\tau$-divisibility. Unfortuantely, we did not figure out a proof for that, neither have we found a counterexample. But still, we are optimistic that the following conjecture holds.
\end{rmk}

\begin{conj}\label{conjnc}
Let $d(A)=B$ be a differential in the $\rho$-Bockstein spectral sequence
\[\bigoplus_{j\ge 0}\Ext_{\C}\left\{\rho^j\right\}\Rightarrow\Ext_{\A_{*,*}^{\R}}^{*,*,*}\left((H_{\R})_{*,*}\right).\]
Let $\tau^i\rho^j\otimes\alpha$ and $\tau^{i'}\rho^{j'}\otimes\beta$ be the $\rho$-Bockstein names of $A$ and $B$ respectively. Then $i\ge i'$. Consequently, the $\rho$-Bockstein name of $x$ and the Atiyah-Hirzebruch name of $[\overline{y}]$ always have the same classical Adams part.
\end{conj}

\begin{rmk}
For an element in the negative cone part of the $E_2$-term of the genuine $C_2$-equivariant spectral sequence whose representative lies in the second summand of (\ref{rhonc}), the relation between the $\rho$-Bockstein names and the Atiyah-Hirzebruch names is not clear. However, the comparison between the Borel and genuine $C_2$-equivariant spectral sequences will provide us some information in the $\rho$-Bockstein spectral sequence (\ref{rhotheta}), which may be helpful in other computations. For example, $Qh_1^2c_0\in\Ext_{\A_{*,*}^{\R}}^{4,15,8}\left(\left(H_{\R}\wedge\left(\SR\right)^{\Psi}\right)_{*,*}\right)$ can be lifted to $h_0^3h_3[-4]\in\Ext_{\A_{*,*}^{\R}}^{5,16,8}\left((H_{\R})^h_{*,*}\right)$ due to the degree reason. From the differential $d(h_0^3h_3[-4])=Ph_2[-9]$ in (\ref{ah-1}), we can see that $Qh_1^2c_0$ maps to $\tau^{-2}Ph_2$ under the connecting homomorphism
\[\Ext_{\A_{*,*}^{\R}}^{*,*,*}\left(\left(H_{\R}\wedge\left(\SR\right)^{\Psi}\right)_{*,*}\right)\to\Ext_{\A_{*,*}^{\R}}^{*+1,*+1,*}\left((H_{\R})^{\Psi}_{*,*}\right).\]
\end{rmk}

%\appendix

%\section{The charts of $\Ext_{\A_{*,*}^{\R}}^{*,*,*}\left((H_{\R})^h_{*,*}\right)$}\label{sec-charts}
%The results of this article connect Mahowald's computations of the classical Adams spectral sequences \cite{mah} to Guillou-Isaksen's computations of the genuine $C_2$-equivariant Adams spectral sequences \cite{gi} (c.f. also \cite{bi}, \cite{bgi}). These computations go up to the $30$-stem. The connection between the classical Adams spectral sequences of $\PP_k^\infty$ and the genuine $C_2$-equivariant spectral sequence for the sphere relies on a comparison of $\Ext_{\A_*}^{*,*}\left(H_*\PP_k^\infty\right)$, $\Ext_{\A_{*,*}^{\R}}^{*,*,*}\left((H_{\R})^h_{*,*}\right)$, and $\Ext_{\A_{*,*}^{\R}}^{*,*,*}\left((H_{\R})^{C_2}_{*,*}\right)$ (see Section \ref{sec-borel}, \ref{sec-limit}, \ref{sec-genuine}, \ref{sec-names}). The charts in this Appendix give the complete computations of $\Ext_{\A_{*,*}^{\R}}^{*,*,*}\left((H_{\R})^h_{*,*}\right)$ for stems $0$ through $30$ and coweights $-2$ through $13$.

%For legibility, each chart contains the data in a fixed coweight ($t-s-w$). The horizontal coordinate is the topological degree ($t-s$), and the vertical coordinate is the Adams filtration degree ($s$). The red lines in the charts indicate the $\rho$-multiplication. The elements are labeled by their Atiyah-Hirzebruch names (c.f. Section \ref{sec-names}). If two elements are connected by red lines, and their weights are both positive or both non-positive, then they have the same Atiyah-Hirzebruch names.

%\includepdf[pages=-,angle=-90]{charts/Charts.pdf}

\bibliographystyle{alpha}
\bibliography{bor}

\end{document}